\newcommand{\id}{\mathbf{id}}
\definecolor{hypercolor}{rgb}{0.5,0,0.5}
\numberwithin{equation}{section}
\newtheorem{theorem}{Theorem}[section]
\newtheorem{lemma}[theorem]{Lemma}
\newtheorem{proposition}[theorem]{Proposition}
\newtheorem{corollary}[theorem]{Corollary}
\theoremstyle{definition}
\newtheorem{definition}[theorem]{Definition}
\newtheorem{remark}[theorem]{Remark}
\newtheorem*{question*}{Question}
\newtheorem{example}[theorem]{Example}
\newcommand\cD{\mathcal{D}}
\newcommand\cJ{\mathcal{J}}
\newcommand\CC{\mathbb{C}}
\newcommand\GG{\mathbb{G}}
\newcommand\QQ{\mathbb{Q}}
\newcommand\RR{\mathbb{R}}
\newcommand\ZZ{\mathbb{Z}}
\newcommand\bD{\mathbf{D}}
\newcommand\bI{\mathbf{I}}
\newcommand\bJ{\mathbf{J}}
\newcommand\bR{\mathbf{R}}
\newcommand\rH{\mathrm{H}}
\newcommand\rT{\mathrm{T}}
\newcommand\rU{\mathrm{U}}
\newcommand\rZ{\mathrm{Z}}
\newcommand\NS{{\rm NS}}
\newcommand\GL{{\rm GL}}
\newcommand{\diag}{ \mathtt{diag}}
\newcommand{\scA}{{\mathbf{A}}}
\newcommand{\scF}{{\mathscr{F}}}
\newcommand{\scE}{{\mathscr{E}}}
\newcommand{\scP}{{\mathscr{P}}}
\newcommand{\scL}{{\mathscr{L}}}
\newcommand{\scH}{{\mathscr{H}}}
\newcommand{\scG}{{\mathscr{G}}}
\newcommand{\Aut}{{\mathtt{Aut}}}
\newcommand{\Br}{{\mathrm{Br}}}
\newcommand{\bXi}{\boldsymbol{\Xi}}
\newcommand{\bUp}{\boldsymbol{\Upsilon}}
\newcommand{\bAd}{\mathbf{Ad}}
\newcommand{\bad}{\mathbf{ad}}
\newcommand{\bvr}{\boldsymbol{\varrho}}
\DeclareMathOperator{\Coh}{Coh}
\DeclareMathOperator{\bdc}{D^b}
\DeclareMathOperator{\Hom}{Hom}
\DeclareMathOperator{\Pic}{Pic}
\DeclareMathOperator{\Ad}{Ad}
\DeclareMathOperator{\chara}{char}
\DeclareMathOperator{\ord}{ord}
\DeclareMathOperator{\image}{im}
\DeclareMathOperator{\Spec}{Spec}
\DeclareMathOperator{\pr}{pr}
\DeclareMathOperator{\Inf}{\mathbf{Inf}}
\DeclarePairedDelimiterX\set[1]\lbrace\rbrace{#1}
\newcommand{\cf}{\textit{cf.~}}
\newcommand{\msc}[1]{\gdef\@msc{#1}}
\keywords{derived categories; abelian varieties; twisted sheaves; equivariant categories}
\def\@settitle{\begin{center}%
  \baselineskip14\p@\relax
  \bfseries
  \@title
  \end{center}%
  \ifx\@msc\@empty\else
    \footnotetext[0]{\textbf{2020 Mathematics Subject Classification.} \@msc}%
  \fi
}
\title[Derived isogenies]{Derived isogenies between abelian varieties}
\author{Zhiyuan Li}
\address{Shanghai Center for Mathematical Science, Shanghai, China.}
\email{\url{zhiyuan\_li@fudan.edu.cn}}
\author{Ziwei Lu}
\address{Shanghai Center for Mathematical Science, Shanghai, China.}
\email{\url{luzw23@m.fudan.edu.cn}}
\author{Zhichao Tang}
\address{Shanghai Center for Mathematical Science, Shanghai, China.}
\email{\url{zctang19@fudan.edu.cn}}
\begin{document}

\begin{abstract}
    The notion of derived isogeny provides a natural framework for comparing smooth projective varieties up to rational cohomological equivalence. In this paper, we establish a derived Torelli Theorem for twisted abelian varieties. Starting from this, we explore the relation between derived isogenies and classical isogenies.  We show that two abelian varieties of dimension $\geq 2$ are derived isogenous if and only if they are principally isogenous over fields of characteristic zero. This generalizes the result for abelian surfaces and completely solves the question raised in \cite{LZ25}.
\end{abstract}

\maketitle

\tableofcontents

\section{Introduction}

\subsection{Derived Torelli theorem}
Let $X$ be an algebraic variety over an algebraically closed field $k$. 
The bounded derived category $\bdc(X)$ of coherent sheaves on $X$ is studied as an important invariant attached to $X$.
For $X$ a smooth projective variety with ample canonical or anticanonical sheaf,
Bondal--Orlov \cite[Theorem 2.5]{BO01} has shown that if \[\bdc(X_1)\simeq \bdc(X_2)\,,\]  then $X_1\cong X_2$.
Therefore, additional, non-geometric derived equivalences can only occur in other cases.
Important examples include hyper-K{\"a}hler varieties, such as K3 surfaces, and abelian varieties.
For abelian varieties, a fundamental result proved by Orlov \cite[Theorem 2.19]{Orl02} and Polishchuk \cite[Theorem 4.3]{Pol96} is that two abelian varieties $X_1$ and $X_2$ are derived equivalent
if and only if there is a symplectic isomorphism
\begin{equation*}
    X_1 \times \widehat{X}_1\cong X_2\times \widehat{X}_2\,.
\end{equation*}
This is known as \emph{the derived Torelli theorem} for abelian varieties.

A natural generalization involves the bounded derived category of twisted abelian varieties.
It has been conjectured in \cite[Remark 2.8]{KO03} in the case of $k=\CC$ that there is an analogous result for twisted abelian varieties.
To be precise, let $(X,\alpha)$ be a twisted abelian variety,
we denote by $\bdc(X,\alpha)$ the bounded derived category of $\alpha$-twisted coherent sheaves on $X$.
For such a pair, we can associate a symplectic abelian variety $A_{(X,\alpha)}$
that plays a key role in characterizing twisted derived equivalences and determining twisted Fourier--Mukai partners.
Notably, there is a natural isogeny
\[\pi\colon X \times \widehat{X}\longrightarrow A_{(X,\alpha)}\]
between $X \times \widehat{X}$ and $A_{(X,\alpha)}$,
which becomes an isomorphism when $\alpha$ is trivial.
In analogy with Orlov's celebrated result, we establish the following twisted version:
\begin{theorem}
\label{thm:main}
Let $(X_1,\alpha_1)$ and $(X_2,\alpha_2)$ be twisted abelian varieties over an algebraically closed field $k$.
Assume that $\chara(k)\neq2$ and that $\ord(\alpha_i)$ is invertible in $k$ for $i=1,2$.
If there exists a derived equivalence
\[
\bdc(X_1,\alpha_1) \simeq \bdc(X_2,\alpha_2)\,,
\]
then there is a symplectic isomorphism
\[
\psi\colon A_{(X_1,\alpha_1)} \xlongrightarrow{\sim} A_{(X_2,\alpha_2)}\,.
\]
Moreover, when $\chara(k) = 0$, the converse holds and thus these two conditions are equivalent.
\end{theorem}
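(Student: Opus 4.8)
Both implications are obtained by transporting Orlov's and Polishchuk's arguments to the twisted setting through the isogeny $\pi\colon X\times\widehat X\to A_{(X,\alpha)}$. For the implication ``derived equivalence $\Rightarrow$ symplectic isomorphism'', fix lifts of $\alpha_1,\alpha_2$ to $\mathbb{G}_m$-gerbes (or $B$-fields), which is harmless since $\ord(\alpha_i)$ is invertible in $k$. By the twisted analogue of Orlov's representability theorem, any equivalence $\Phi\colon\bdc(X_1,\alpha_1)\xrightarrow{\sim}\bdc(X_2,\alpha_2)$ is a Fourier--Mukai transform with kernel $\mathcal{E}\in\bdc(X_1\times X_2,\alpha_1^{-1}\boxtimes\alpha_2)$. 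To $\mathcal E$ I attach its twisted Mukai vector, the twisted Chern character times $\sqrt{\mathrm{Td}}$, in the relevant cohomological realizations --- singular/Hodge cohomology when $\chara(k)=0$, and $\ell$-adic together with crystalline cohomology in general, the coprimality hypothesis being exactly what is needed for these twisted realizations to be defined and torsion-free. A twisted Grothendieck--Riemann--Roch computation together with the convolution formula for kernels then shows that $v(\mathcal E)$ induces an isometry of twisted Mukai structures $\widetilde H(X_1,\alpha_1)\xrightarrow{\sim}\widetilde H(X_2,\alpha_2)$ compatible with the Hodge filtration, respectively with Frobenius and the Galois action.

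\textbf{Descending to the abelian variety.} By construction $A_{(X,\alpha)}$ is the abelian variety whose cohomological realizations are the weight-one part of the twisted Mukai structure of $(X,\alpha)$ --- the sub/quotient of $H^1(X\times\widehat X)$ cut out by $\pi$ --- equipped with its natural symplectic pairing. Hence the Mukai isometry above restricts to an isomorphism of the weight-one realizations of $A_{(X_1,\alpha_1)}$ and $A_{(X_2,\alpha_2)}$ preserving the symplectic pairing and all the additional structures. By the Torelli theorem for abelian varieties --- the Hodge-theoretic statement when $\chara(k)=0$, and in positive characteristic the version deduced from Tate's isogeny theorem after spreading out to a finitely generated base, combined with the crystalline comparison --- this isomorphism of realizations is induced by a genuine isomorphism $\psi\colon A_{(X_1,\alpha_1)}\xrightarrow{\sim}A_{(X_2,\alpha_2)}$, which is symplectic because the pairing was matched. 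One subtlety here is that one must verify that the isometries arising from twisted kernels actually land in the symplectic isometry group rather than merely the orthogonal group of the full Mukai structure; I would obtain this as the twisted counterpart of Orlov's identification of the autoequivalence group of $\bdc(X)$ with a symplectic group, reducing via $\pi$ and an equivariant-descent argument to the untwisted statement on $X\times\widehat X$.

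\textbf{The converse in characteristic zero.} Conversely, given a symplectic isomorphism $\psi$, I would produce a kernel following Polishchuk. Pulling $\psi$ back along $\pi_1$ and $\pi_2$ yields a subgroup $\Gamma\subset(X_1\times\widehat X_1)\times(X_2\times\widehat X_2)$ that is Lagrangian for the difference of the two symplectic forms; that $\psi$ is symplectic is precisely what makes the gerbe data $\alpha_1$ and $\alpha_2$ agree along $\Gamma$, so that there exists a semihomogeneous $(\alpha_1^{-1}\boxtimes\alpha_2)$-twisted sheaf $\mathcal L$ supported on the image of $\Gamma$. One then checks that $\Phi_{\mathcal L}$ is an equivalence by testing the twisted versions of the Bondal--Orlov and Bridgeland criteria on the spanning class of twisted skyscrapers --- the same computation Polishchuk and Orlov perform in the untwisted Lagrangian case, transported through the isogenies. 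Characteristic zero enters because it is there that this construction, equivalently the surjectivity of the relevant period map, is carried out (via Polishchuk's explicit analysis, or by reduction to $k=\CC$); we do not pursue positive characteristic, where the construction would require additional input.

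\textbf{Main obstacle.} The crux --- and the step I expect to resist most --- is the passage in the second paragraph from the Mukai isometry to the symplectic isomorphism of abelian varieties: setting up the twisted realizations and their symplectic pairings uniformly enough in mixed characteristic that $v(\mathcal E)$ is an isometry on the nose, proving that the induced map on weight-one pieces is symplectic, and then descending it to a morphism of abelian varieties via Tate's theorem in positive characteristic and Hodge-theoretic Torelli in characteristic zero. Once this dictionary between $\bdc(X,\alpha)$ and $A_{(X,\alpha)}$ is in place, both directions reduce to the classical results of Orlov and Polishchuk.
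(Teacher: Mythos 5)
Your forward-direction argument follows a cohomological route (twisted Mukai vectors, twisted GRR, Torelli/Tate) that is genuinely different from the paper's, and it has a real gap at precisely the place you flag. The step from ``$v(\mathcal E)$ induces an isometry of twisted Mukai structures'' to ``this restricts to an isomorphism of the weight-one realizations of $A_{(X_i,\alpha_i)}$ preserving the symplectic pairing'' is not a restriction at all: the cohomological action of a Fourier--Mukai equivalence between abelian varieties does not respect the weight grading, and there is no natural weight-one piece of $\widetilde{H}(X,\alpha)$ sitting inside it from which one reads off $H^1(A_{(X,\alpha)})$. Already in the untwisted case Orlov's theorem is not proved this way, and one cannot recover $X\times\widehat X$ from an isometry of $\bigoplus_i H^i$; the failure is not a matter of bookkeeping with $B$-fields but a structural one. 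You acknowledge that this is ``the step I expect to resist most'' and ``one must verify that the isometries land in the symplectic isometry group'', but the proposal contains no mechanism to produce that symplectic map; invoking ``Orlov's identification of the autoequivalence group with a symplectic group, reducing via $\pi$'' presupposes the twisted analogue of the very theorem being proved.

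The paper sidesteps the cohomological realization entirely and works categorically. It constructs a twisted Orlov functor $\bXi_{X,\alpha}\colon\bdc(A_{(X,\alpha)},q_X^*\alpha)\xrightarrow{\sim}\bdc(X\times X,\alpha^{-1}\boxtimes\alpha)$ via the equivariant presentation $\bdc(X,\alpha)\simeq\bdc(X)_{X[n],\rho,a}$, shows that translation/tensor autoequivalences $\Phi_z$ ($z\in A_{(X,\alpha)}$) transport under $\bXi_{X,\alpha}$ to tensoring by line bundles $\scP_{A,\psi_\alpha(z)}$ on $A_{(X,\alpha)}$, and then, given $\Phi$, conjugates the adjoint $\bAd_\Phi$ through the $\bXi_i$ to an equivalence $\Psi\colon\bdc(A_1,q_1^*\alpha_1)\to\bdc(A_2,q_2^*\alpha_2)$ that preserves the spanning class of skyscrapers and hence comes from an isomorphism $\psi\colon A_1\to A_2$; symplecticity then falls out of the commutation $\bAd_\Phi\circ\bad_{z_1}\simeq\bad_{z_2}\circ\bAd_\Phi$. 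This route needs no crystalline or $\ell$-adic input and works uniformly in odd characteristic, which your crystalline/Tate-theorem sketch would not easily deliver (you would at best get an isogeny respecting a cohomological pairing, not an isomorphism respecting the biextension). Your treatment of the converse in characteristic zero, via Polishchuk's Lagrangian/semihomogeneous-kernel construction, is essentially the same as the paper's (which cites Polishchuk's Theorem 4.2 through Elagin's equivariant machinery), so that part is fine. The forward direction, however, needs a substantively different argument than the one you propose.
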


This result constitutes \emph{the twisted derived Torelli theorem} for abelian varieties in characteristic zero.

\begin{remark}
We contextualize our main theorem within the existing literature:
\begin{itemize}
    \item \textbf{Polishchuk's work} \cite[Theorem 4.3]{Pol96} establishes the converse implication (i.e., symplectic isomorphism implies twisted derived equivalence) under the hypotheses:
    \begin{enumerate}[label={(\roman*)}]
        \item $\chara(k) \neq 2$;
        \item The covering maps $X_1 \times \widehat{X}_1 \to A_{(X_1,\alpha_1)}$ and $X_2 \times \widehat{X}_2 \to A_{(X_2,\alpha_2)}$ are of degrees invertible in $k$, and $\widehat{X}_2 \cap \psi(\widehat{X}_1)$ is finite with order invertible in $k$ when $\chara(k) \neq 0$.
    \end{enumerate}

    \item \textbf{Other related work} includes \cite[Theorem D]{La24}, which addresses the case where $\alpha_1$ is trivial; a recent preprint \cite{la26} provides a characterization of twisted derived equivalences in terms of isogenies. As explicitly acknowledged in \cite{la26}, obtaining this characterization fundamentally relies on our Theorem \ref{thm:main} to deduce the symplectic isomorphism from the twisted derived equivalence.
    See also \cite{Li25} for some other related results.

    \item By Orlov's result \cite{Orl02}, derived equivalence is preserved under specialization.
    Theorem \ref{thm:main} shows that this also holds for twisted derived equivalence provided that the orders of the Brauer classes are invertible in the base field.
\end{itemize}
\end{remark}

When $k=\CC$, we obtain a Hodge-theoretic criterion for Theorem \ref{thm:main}, confirming the conjecture in \cite[Remark 2.8]{KO03} (see also \cite[\S 13.4]{Huy06}). This conjecture has drawn considerable attention, and a number of subsequent works have been devoted to this problem (see, e.g., \cite{La24,la26,Li25,LZ25}).
Recall that for a complex abelian variety $X$, the lattice $\rH_1(X\times\widehat{X},\mathbb{Z})=\rH_1(X,\mathbb{Z})\oplus\rH_1(\widehat{X},\mathbb{Z})$ comes with a natural bilinear form by the dual pairing:
\[
q_X\bigl((v_1,\theta_1),(v_2,\theta_2)\bigr)=\theta_2(v_1)+\theta_1(v_2)\,.
\]
\vspace{-1.3\baselineskip}

\begin{theorem}
\label{thm:TDE}
Assume $k = \mathbb{C}$. Then $\bdc(X_1,\alpha_1) \simeq \bdc(X_2,\alpha_2)$ if and only if there is a isometry of integral Hodge structures
\[
\mathrm{H}_1(X_1,\mathbb{Z}) \oplus \mathrm{H}_1(X_1,\mathbb{Z})^* \cong \mathrm{H}_1(X_2,\mathbb{Z}) \oplus \mathrm{H}_1(X_2,\mathbb{Z})^* \,,
\]
where the Hodge structures are endowed with complex structure operators:
\[
\cJ_{\alpha_i} \coloneq \begin{pmatrix}
    J_i & 0 \\
    B_{\alpha_i}J_i + J_i^\top B_{\alpha_i} & -J_i^\top
\end{pmatrix}, \quad i=1,2 \text{ respectively.}
\]
Here, $J_i$ denotes the complex structure on $\mathrm{H}_1(X_i,\mathbb{Z})$ and $B_{\alpha_i}$ is the \textbf{B}-field associated to $\alpha_i$.
\end{theorem}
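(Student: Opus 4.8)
The plan is to read off Theorem~\ref{thm:TDE} from Theorem~\ref{thm:main} by making the assignment $(X,\alpha)\rightsquigarrow A_{(X,\alpha)}$ explicit on first homology over $\mathbb{C}$. Since $\chara(\mathbb{C})=0$, the coprimality hypothesis of Theorem~\ref{thm:main} is vacuous, so that theorem already yields $\bdc(X_1,\alpha_1)\simeq\bdc(X_2,\alpha_2)$ if and only if there is a symplectic isomorphism $\psi\colon A_{(X_1,\alpha_1)}\xrightarrow{\sim}A_{(X_2,\alpha_2)}$. Everything therefore reduces to translating the existence of $\psi$ into the stated Hodge-theoretic condition.

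For the translation I would invoke Riemann's equivalence between complex abelian varieties carrying a symplectic form (the polarization-type datum borne by $A_{(X_i,\alpha_i)}$) and polarizable weight-one integral Hodge structures carrying the corresponding alternating form, implemented by $A\mapsto\mathrm H_1(A,\mathbb Z)$. Under this equivalence a symplectic isomorphism $\psi$ corresponds precisely to an isomorphism of integral Hodge structures $\mathrm H_1(A_{(X_1,\alpha_1)},\mathbb Z)\xrightarrow{\sim}\mathrm H_1(A_{(X_2,\alpha_2)},\mathbb Z)$ respecting the symplectic forms. So it remains to identify, for each $i$, the integral Hodge structure with symplectic form attached to $A_{(X_i,\alpha_i)}$ with the triple $\bigl(\mathrm H_1(X_i,\mathbb Z)\oplus\mathrm H_1(X_i,\mathbb Z)^{*},\ \text{standard symplectic form},\ \cJ_{\alpha_i}\bigr)$; chaining the three equivalences then proves the theorem.

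The computational heart is this identification, which I would obtain by unwinding the construction of $A_{(X,\alpha)}$ given in the body of the paper. Over $\mathbb C$ the Brauer class $\alpha$ is represented, via the exponential sequence $0\to\mathbb Z\to\mathcal O_X\to\mathcal O_X^{*}\to0$ together with $\mathrm H^2(X,\mathbb Z)=\wedge^2\mathrm H^1(X,\mathbb Z)$ and the torsion-freeness of $\mathrm H^3(X,\mathbb Z)$, by a B-field $B_\alpha$, that is, an alternating form $\mathrm H_1(X,\mathbb Q)\to\mathrm H_1(X,\mathbb Q)^{*}$ with denominators dividing $\ord(\alpha)$, well defined modulo $\mathrm H^2(X,\mathbb Z)$ and $\NS(X)_{\mathbb Q}$. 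The linear automorphism $\exp(B_\alpha)=\begin{pmatrix}\id&0\\ B_\alpha&\id\end{pmatrix}$ of $\mathrm H_1(X,\mathbb R)\oplus\mathrm H_1(X,\mathbb R)^{*}$ conjugates the complex structure $J\oplus(-J^{\top})$ of $X\times\widehat X$ to $\cJ_\alpha:=\exp(B_\alpha)\circ\bigl(J\oplus(-J^{\top})\bigr)\circ\exp(-B_\alpha)$, and multiplying out this product reproduces exactly the matrix $\cJ_{\alpha}$ of the statement (so in particular $\cJ_\alpha^2=-\id$ comes for free). The isogeny $X\times\widehat X\to A_{(X,\alpha)}$ exhibits $\mathrm H_1(A_{(X,\alpha)},\mathbb Z)$ as a lattice commensurable with $\mathrm H_1(X,\mathbb Z)\oplus\mathrm H_1(X,\mathbb Z)^{*}$ equipped with the restricted complex structure and the restricted canonical symplectic form; transporting these along $\exp(B_\alpha)$ — suitably scaled, e.g. by $\ord(\alpha)$, to land in the integral lattice — turns the complex structure into $\cJ_\alpha$ while keeping the lattice $\mathrm H_1(X,\mathbb Z)\oplus\mathrm H_1(X,\mathbb Z)^{*}$, as required.

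The step I expect to be the main obstacle is the bookkeeping of the symplectic form and the independence from the choice of representative of $B_\alpha$. One must check that the polarization of $A_{(X,\alpha)}$ induced from the canonical principal polarization of $X\times\widehat X$ matches, after the $\exp(B_\alpha)$-identification, the standard symplectic form on $\mathrm H_1(X,\mathbb Z)\oplus\mathrm H_1(X,\mathbb Z)^{*}$ paired with the $\cJ_\alpha$-Hodge structure; and that replacing $B_\alpha$ by a representative differing by an integral class or by an element of $\NS(X)_{\mathbb Q}$ alters $(\cJ_\alpha,\text{symplectic form})$ only through a symplectic automorphism of $\mathrm H_1(X,\mathbb Z)\oplus\mathrm H_1(X,\mathbb Z)^{*}$. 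Getting the signs, the degree of the isogeny, and the split into the $(1,1)$-part and the transcendental part of $B_\alpha$ exactly right is the delicate point; once it is in place, the chain of equivalences ``twisted derived equivalence $\iff$ symplectic isomorphism of the $A$'s $\iff$ symplectic isomorphism of the $\cJ_{\alpha_i}$-Hodge structures'' finishes the proof.
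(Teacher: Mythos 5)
Your proposal takes essentially the same route as the paper: invoke Theorem \ref{thm:main} (valid unconditionally over $\mathbb{C}$), then identify $A_{(X,\alpha)}$ as the weight-one Hodge structure on $\mathrm H_1(X,\mathbb Z)\oplus\mathrm H_1(X,\mathbb Z)^{*}$ with operator $\cJ_\alpha$. The shear identity $\exp(B_\alpha)\circ\cJ_0\circ\exp(-B_\alpha)=\cJ_\alpha$ that you flag as the computational heart is exactly the computation inside the paper's Proposition \ref{prop:complex-structure-Hodge}, which also supplies the ``bookkeeping'' you anticipate: the integral isogeny realizing $\pi_X\colon X\times\widehat X\to A_{(X,\alpha)}$ on lattices is not a uniform rescaling $n\exp(B_\alpha)$ but rather $\widetilde{\pi}_\alpha=\exp(B_\alpha)\circ\operatorname{diag}(n\cdot\mathrm{id}_{\Lambda},\,\mathrm{id}_{\Lambda^{*}})$, i.e.\ $(x,\widehat x)\mapsto\bigl(nx,\,\widehat x+(nB_\alpha)x\bigr)$; that choice is what makes the kernel come out to be exactly the graph of $\phi_{B_\alpha}$ and hence matches the definition of $A_{(X,\alpha)}$, while the symplectic form descends from the dual pairing and the choice of $B_\alpha$ only matters modulo $\Pic(X)$, which acts by symplectic automorphisms preserving $J$.
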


As an application, we also answer a question posed by Huybrechts. Indeed, we prove in Theorem~\ref{thm:no-dual-equiv-general} that for any dimension $g \ge 2$, general twisted abelian varieties are not twisted derived equivalent to their duals, regardless of the Brauer class chosen on the dual.

\subsection{Derived Isogenies}

The notion of isogeny between abelian varieties is a natural equivalence in the study of abelian varieties. In recent years, this classical concept has found a natural generalization in the derived categorical setting. Building on the work of Huybrechts \cite{Huy19} and \cite{LZ25}, we introduce the following definition of derived isogeny via twisted derived equivalences:

\begin{definition}
    Two abelian varieties $X$ and $Y$ are \emph{derived isogenous} if they can be connected by derived equivalences between twisted abelian varieties,
    i.e., there exist $\alpha\in\Br(X)$, $\beta\in\Br(Y)$, and twisted abelian varieties $(X_i,\alpha_i)$ and $(X_i,\beta_i)$ for $i = 1,\dots,n$,
    such that there is a sequence of derived equivalences as illustrated in the following diagram.
    \begin{equation}
    \label{eq:zigzagtwisted}
    \begin{tikzcd}[row sep=tiny,column sep=small]
    \bdc(X,\alpha) \ar[r, "\simeq"]
        & \bdc(X_1,\beta_1) \\
        & \bdc(X_1,\alpha_1) \ar[r, "\simeq"]
            & \bdc(X_2,\beta_2) \\
        &   & \vdots &  \\
        &   & \bdc(X_{n-1},\alpha_{n-1}) \ar[r, "\simeq"]
            & \bdc(X_n,\beta_n) \\
        &   &   & \bdc(X_n,\alpha_{n}) \ar[r, "\simeq"]
            & \bdc(Y,\beta)\mathrlap{\,.}
    \end{tikzcd}
    \end{equation}
\end{definition}

In fact, the notion of derived isogeny can be defined for arbitrary smooth projective varieties. However, when the dualizing sheaf $\omega_X$ is ample or antiample, the theorems of Calabrese and of Bondal--Orlov show that derived isogenous varieties are isomorphic (\cf\cite[Theorem B]{Cal18}). The situation becomes much more complicated when $\omega_X$ is trivial. 

This framework is deeply inspired by analogous developments for K3 surfaces and abelian surfaces. It suggests that derived isogeny provides a more natural relation between varieties as it compares varieties up to rational cohomological equivalence. In particular, Huybrechts \cite[Theorem 0.1]{Huy19} established that two K3 surfaces are isogenous (in the sense of rational Hodge isometries) if and only if they are connected by a sequence of twisted derived equivalences. The definition above adapts this philosophy to abelian varieties, where the role of the lattice is naturally played by the symplectic structure on $X\times \widehat{X}$.

A natural question arises: How does the notion of \emph{derived isogeny} relate to the classical notion of \emph{isogeny}?
Although defined in quite different terms, these concepts are not independent. Indeed, using the twisted derived Torelli theorem, one can show that twisted Fourier--Mukai equivalences naturally induce isogenies between the underlying abelian varieties. This yields the following implication:
\begin{center}
\tcbox[colback=white]{Derived isogenous $\implies$ isogenous}
\end{center}
However, the converse does not hold in general. Counterexamples already exist among elliptic curves and abelian surfaces.

In this article, we provide a complete characterization of derived isogeny classes of abelian varieties. The key tool is the notion of \emph{principal isogenies}, introduced in \cite[\S 1.2]{LZ25}, which allows us to precisely describe when two isogenous abelian varieties are further derived isogenous.

\begin{definition}
Two abelian varieties $X$ and $Y$ are \emph{principally isogenous} if there exists an isogeny $f \colon X \to Y$ whose degree is a \emph{perfect square}.
\end{definition}

The following theorem extends the main result of \cite[Theorem 1.2.1]{LZ25} for abelian surfaces to abelian varieties of arbitrary dimension.

\begin{theorem}
\label{thm:main-2}
Let $k$ be an algebraically closed field of characteristic zero,
and $X$, $Y$ be abelian varieties over $k$ of dimension $g$.
Then $X$ and $Y$ are derived isogenous if and only if
\begin{enumerate}
   \item  $X$ and $Y$ are principally isogenous when $g > 1$;
   \item $X\cong Y$ when $g = 1$.
\end{enumerate}
\end{theorem}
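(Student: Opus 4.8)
The plan is to treat $g=1$ separately and, for $g\ge 2$, to prove the two implications by rather different methods.

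\textbf{The case $g=1$.} Over an algebraically closed field the Brauer group of a smooth projective curve vanishes (Tsen), so every twisted abelian variety in a derived-isogeny zig-zag between elliptic curves is untwisted, and every link is an honest derived equivalence $\bdc(E_i)\simeq\bdc(E_{i+1})$ of elliptic curves. By Orlov's theorem (equivalently Theorem \ref{thm:main} with trivial twists) each link gives a symplectic isomorphism $E_i\times\widehat{E_i}\cong E_{i+1}\times\widehat{E_{i+1}}$; since the relevant symplectic form is the canonical principal polarization of $A\times\widehat A$, this is an isomorphism of principally polarized abelian surfaces. A principally polarized abelian variety decomposes uniquely into indecomposable principally polarized factors, and here all factors are elliptic curves, so $\{E_i,\widehat{E_i}\}\cong\{E_{i+1},\widehat{E_{i+1}}\}$; using $E\cong\widehat E$ this forces $E_i\cong E_{i+1}$, and chaining along the zig-zag gives $X\cong Y$. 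The converse is trivial.

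\textbf{The case $g\ge 2$, ``derived isogenous $\Rightarrow$ principally isogenous''.} Principal isogeny is an equivalence relation (a composite of perfect-square-degree isogenies again has perfect-square degree, and a quasi-inverse of a degree-$m^2$ isogeny whose kernel is killed by $n$ has degree $(n^g/m)^2$). So it is enough to show a \emph{single} twisted derived equivalence $\bdc(X_1,\alpha_1)\simeq\bdc(X_2,\alpha_2)$ forces $X_1$ and $X_2$ to be principally isogenous, and then walk along the zig-zag, noting that each intermediate $X_i$ is trivially principally isogenous to itself. Theorem \ref{thm:main} supplies a symplectic isomorphism $\psi\colon A_{(X_1,\alpha_1)}\xrightarrow{\sim}A_{(X_2,\alpha_2)}$, and the canonical isogeny $\pi_i\colon X_i\times\widehat{X_i}\to A_{(X_i,\alpha_i)}$ has kernel a Lagrangian subgroup of $(X_i\times\widehat{X_i})[\ord\alpha_i]$, hence of order $\ord(\alpha_i)^{2g}$, a perfect square; composing $\psi$ with $\pi_1$ and a quasi-inverse of $\pi_2$ then yields a perfect-square-degree isogeny $X_1\times\widehat{X_1}\to X_2\times\widehat{X_2}$. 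The delicate point is that this is not yet enough, because principal isogeny does \emph{not} cancel squares (for instance $E^g$ and $E^{g-1}\times(E/(\ZZ/p))$ are not principally isogenous although their self-products are), so one must descend from $X_i\times\widehat{X_i}$ to $X_i$. I would do this by tracking the Lagrangian subvariety $\widehat{X_i}\subset X_i\times\widehat{X_i}$ through $\pi_i$ and $\psi$ and checking that its intersection with $\ker\pi_i$ contributes a square, or, more cleanly, by using the description of the Fourier--Mukai kernel underlying $\psi$ as a twisted line bundle supported (up to the standard equivalences) on the graph of an isogeny $X_1\to X_2$, whose degree is forced to be a perfect square by the order of the twist. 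Either way one concludes $X_1$ is principally isogenous to $X_2$.

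\textbf{The case $g\ge 2$, ``principally isogenous $\Rightarrow$ derived isogenous''.} This is the substantive direction. Given $f\colon X\to Y$ of perfect-square degree, first reduce: two coprime-degree isogenies commute past each other (the kernel of their composite is cyclic, hence splits), so after sorting the prime factorisation of $\deg f$ and pairing equal primes one writes $f$ as a composite of degree-$p^2$ isogenies; a further argument, using $\dim Z\ge 2$, reduces a cyclic kernel $\ZZ/p^2$ to a kernel of type $(\ZZ/p)^2$, so it suffices to produce a derived isogeny between $Z$ and $Z/G$ for $G\subset Z[p]$ of order $p^2$ and $\dim Z\ge 2$. By the converse half of Theorem \ref{thm:main} (valid in characteristic zero) it is enough, allowing a short chain of intermediate twisted abelian varieties, to exhibit $\alpha\in\Br(Z)$ and $\beta\in\Br(Z/G)$ with $A_{(Z,\alpha)}\cong A_{(Z/G,\beta)}$ symplectically. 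I would choose a polarization $\lambda$ on $Z$ of degree prime to $p$ for which $G$ is isotropic under the Weil pairing $e_p^\lambda$ — possible precisely because $\dim Z\ge 2$ leaves room for a $2$-dimensional isotropic subspace of $(Z[p],e_p^\lambda)$ — so that $G$ determines a Brauer class $\alpha$ of order $p$; unwinding the construction of $A_{(-,-)}$ via the correspondence between B-fields and Lagrangian subgroups of $X\times\widehat X$ should then show $A_{(Z,\alpha)}$ and $A_{(Z/G,\beta)}$ arise as symplectic quotients of $Z\times\widehat Z$ and $Z/G\times\widehat{Z/G}$ by matching Lagrangian subgroups, hence are symplectically isomorphic. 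Theorem \ref{thm:main} converts this into $\bdc(Z,\alpha)\simeq\bdc(Z/G,\beta)$, the desired elementary step.

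The hard part will be this last direction, in two places: the reduction of an arbitrary perfect-square-degree isogeny to elementary $(\ZZ/p)^2$-steps, in particular disposing of cyclic $p^2$-kernels (where $\dim\ge 2$ is essential and $g=1$ genuinely fails), and the explicit construction of the Brauer class $\alpha$ from an isotropic $G$ together with the verification of the symplectic isomorphism $A_{(Z,\alpha)}\cong A_{(Z/G,\beta)}$, which demands a precise grip on the construction $A_{(-,-)}$ and on the B-field--Lagrangian dictionary. A secondary obstacle, flagged above, is the descent from $X\times\widehat X$ to $X$ in the first direction, which is not automatic given that principal isogeny fails cancellation.
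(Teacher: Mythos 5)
Your case $g=1$ and the ``derived isogenous $\Rightarrow$ principally isogenous'' direction are sound and basically parallel the paper: in the latter, the ``descent'' worry you flag is handled cleanly in the paper by observing that
$\widehat{X}_1\hookrightarrow A_{(X_1,\alpha_1)}\xrightarrow{\psi}A_{(X_2,\alpha_2)}\xrightarrow{q_{X_2}}X_2$
has kernel $\psi(\widehat{X}_1)\cap\widehat{X}_2$, an intersection of two Lagrangians in a symplectic abelian variety, hence of perfect-square order (this is \S3.1--3.2 and Theorem~\ref{Thm:Polishchuk}(2)); one then pre-composes with a polarization $X_1\to\widehat{X}_1$. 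You do not address the case where $\psi(\widehat{X}_1)\cap\widehat{X}_2$ is positive-dimensional, which the paper handles via Polishchuk's auxiliary-Lagrangian trick; this is not hard but is a real case that needs to be mentioned.

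The genuine gap is in the converse direction, which you yourself identify as the hard part. Your reduction to elementary steps with kernel $(\mathbb Z/p)^2$ is not carried out and, as sketched, does not obviously work. If $\ker f\cong\mathbb Z/p^2$, say, there is \emph{no} single intermediate abelian variety connecting $X$ and $Y$ by two $(\mathbb Z/p)^2$-kernel isogenies of total degree $p^2$; you must enlarge the kernel (pre-compose with $[N]_X$, which is harmless because $\bdc(X)\simeq\bdc(X)$) and then split the enlarged, non-homocyclic kernel into several steps. The paper makes exactly this precise: it introduces \emph{spectrally paired} isogenies, whose kernels are of the form $\bigoplus(\mathbb Z/a_i\mathbb Z)^2$ for arbitrary $a_i$ (not just $(\mathbb Z/p)^2$), proves in Lemma~\ref{lem:ker-to-im} and Theorem~\ref{thm:spectral-induce-symplectic-iso} that each such isogeny underlies a twisted derived equivalence, and then Theorem~\ref{thm:isogeny-decomposition} gives an explicit $\GL_{2g}(\ZZ)$-matrix factorization showing that any perfect-square-degree kernel, after enlargement by some $[N]$, admits a filtration with spectrally paired subquotients; the hypothesis $g\geq 2$ enters precisely through the $4\times 4$ matrix identities in that proof. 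Your plan to handle cyclic $\mathbb Z/p^2$ by ``a further argument, using $\dim Z\geq 2$'' is a placeholder for that theorem, and without it there is no proof. Similarly, your construction of the Brauer class from an isotropic $G$ via a prime-to-$p$ polarization is morally the content of Lemma~\ref{lem:ker-to-im} (an anti-symmetric $\phi\colon X[n]\to\widehat{X}[n]$ with prescribed image), but the subsequent verification that the resulting $A_{(Z,\alpha)}$ and $A_{(Z/G,\beta)}$ are symplectically isomorphic, and that $\psi(\widehat{X})\cap\widehat{Y}$ is finite (needed to invoke Theorem~\ref{Thm:Polishchuk}(1)), is exactly the non-trivial content of Theorem~\ref{thm:spectral-induce-symplectic-iso}, which you state as a hope rather than prove.
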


We remark that the second statement follows from two well-established facts: for curves, derived equivalences imply isomorphism, and Brauer groups are trivial.
See Huybrechts \cite[Corollary 5.46]{Huy06} for the details of the first fact.

\medskip

When $\chara(k)=p>2$, our strategy is still valid for \emph{prime-to-$p$ principal isogenies}.
\begin{theorem}\label{thm3}
Let $k$ be an algebraically closed field of $\chara(k)=p>2$, and $X$, $Y$ be abelian varieties over $k$ of dimension $g\geq2$. If there is a prime-to-$p$ principal isogeny between $X$ and $Y$, then $X$ and $Y$ are derived isogenous.
\end{theorem}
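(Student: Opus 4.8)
The plan is to rerun the proof of Theorem~\ref{thm:main-2} with the prime-to-$p$ hypothesis carried along at every step, so that all the Brauer classes that appear have order prime to $p$ and all the auxiliary isogenies have degree prime to $p$. This is exactly what is needed for the converse implication of Theorem~\ref{thm:main} --- equivalently Polishchuk's theorem \cite[Theorem~4.3]{Pol96}, whose hypotheses (covering degrees invertible in $k$, finite intersections of invertible order) are met precisely because everything in sight is prime to $p$ --- to remain applicable. Thus it suffices to produce, starting from the given principal isogeny, a finite chain of abelian varieties $X=Z_0,Z_1,\dots,Z_m=Y$ together with Brauer classes of order prime to $p$ and symplectic isomorphisms $A_{(Z_i,\gamma_i)}\xrightarrow{\sim}A_{(Z_{i+1},\gamma_{i+1}')}$; such data assembles into a zigzag of twisted derived equivalences of the shape \eqref{eq:zigzagtwisted}, witnessing that $X$ and $Y$ are derived isogenous.

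\emph{Reduction to prime power degree.} Let $f\colon X\to Y$ be a prime-to-$p$ principal isogeny, so $\ker f$ is an \'etale $k$-group scheme of order $d^2$ with $\gcd(d,p)=1$. In the $\ell$-primary decomposition $\ker f=\bigoplus_{\ell\mid d}N_\ell$ each $|N_\ell|$ is again a perfect square (a square that is a product of pairwise coprime prime powers is a product of squares), so $f$ factors as a composition $X\to X/N_{\ell_1}\to X/(N_{\ell_1}\oplus N_{\ell_2})\to\cdots\to Y$ of principal isogenies of prime power degree. Since derived isogeny is a transitive relation, we may assume from now on that $\deg f=\ell^{2a}$ with $\ell\neq p$.

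\emph{Realizing $f$ by a $B$-field twist.} Put $n$ equal to the exponent of $\ker f$, a power of $\ell$, so that there is a quasi-inverse isogeny $g\colon Y\to X$ with $g\circ f=[n]_X$, $f\circ g=[n]_Y$, $\ker g=f(X[n])$, and, by duality and the Weil pairing, $\ker\widehat g=(\ker f)^{\perp}\subset\widehat X[n]$. Building from $f$ a symplectic similitude $\psi\colon X\times\widehat X\to Y\times\widehat Y$ --- for instance the block map $\bigl(\begin{smallmatrix}f&0\\0&\widehat g\end{smallmatrix}\bigr)$, possibly corrected by polarization terms --- one checks $\widehat\psi\circ e_Y\circ\psi=[n]\circ e_X$ for the canonical principal polarizations $e_X,e_Y$ of $X\times\widehat X$ and $Y\times\widehat Y$, so $\psi$ is a similitude of multiplier $n$, and its kernel $\ker f\times(\ker f)^{\perp}$ has order $n^{2g}$, i.e.\ is a Lagrangian subgroup of $(X\times\widehat X)[n]$; this is precisely where the perfect-square hypothesis on $\deg f$ enters. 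Now invoke the structure theory of the symplectic abelian varieties $A_{(\cdot,\cdot)}$ developed earlier in the paper: $A_{(X,\alpha)}$ is the quotient of $X\times\widehat X$ by an explicit isotropic subgroup $\Gamma_\alpha$, of ``level'' $m_\alpha$, attached to a $B$-field lift of $\alpha$ (with analogous $\Gamma_\beta,m_\beta$ for $A_{(Y,\beta)}$). Chasing the identity $\widehat\psi\circ e_Y\circ\psi=[n]\circ e_X$ through the quotient maps $\pi_X,\pi_Y$ shows that once $\psi^{-1}(\Gamma_\beta)=\Gamma_\alpha$ and the levels satisfy $m_\alpha=n\,m_\beta$, the similitude $\psi$ descends to an isomorphism $A_{(X,\alpha)}\xrightarrow{\sim}A_{(Y,\beta)}$ preserving the principal polarizations, i.e.\ a symplectic isomorphism. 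It therefore remains to choose $B$-field lifts of classes $\alpha\in\Br(X)[\ell^\infty]$ and $\beta\in\Br(Y)[\ell^\infty]$ --- possibly after modifying $\psi$ by a symplectic automorphism of $X\times\widehat X$ induced by a derived autoequivalence, or after inserting one auxiliary abelian variety into the chain --- so that these two conditions are met.

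\emph{Main obstacle.} The crux is this last matching step: one must understand exactly which isotropic subgroups of $(X\times\widehat X)[n]$ occur as $\Gamma_\alpha$ for an honest Brauer class (as opposed to those coming from polarization data), and then realize $\ker\psi$ and $\psi^{-1}(\Gamma_\beta)$ as the $\Gamma_\alpha$ of suitable $B$-fields with the prescribed level discrepancy $m_\alpha=n\,m_\beta$; massaging the combinatorics of $\ker f$ to achieve this is where, if anywhere, one is forced to lengthen the zigzag. A second, omnipresent point --- and the reason the method does not reach $p$-power isogenies --- is that in characteristic $p$ one must systematically pass to prime-to-$p$ parts of all torsion subgroups, Brauer groups, and Weil pairings, both to keep the relevant finite group schemes \'etale (so that $(\ker f)^{\perp}$ and the quasi-inverse $g$ behave as above) and to secure the invertibility-of-degree and finiteness hypotheses of Polishchuk's theorem; for a $p$-power isogeny these inputs break down and a substantially different (e.g.\ $F$-crystal or Dieudonn\'e-theoretic) argument would be required.
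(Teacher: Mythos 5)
Your proposal correctly identifies the overall shape of the argument (reduce to well-behaved isogenies that lift to symplectic isomorphisms among the $A_{(X,\alpha)}$, then invoke Polishchuk's theorem, and keep everything prime to $p$ so the invertibility hypotheses hold), but it stops exactly at the step that is the real mathematical content of the theorem. In the section you label the ``Main obstacle'' you write that one must ``realize $\ker\psi$ and $\psi^{-1}(\Gamma_\beta)$ as the $\Gamma_\alpha$ of suitable $B$-fields \dots\ massaging the combinatorics of $\ker f$ to achieve this is where, if anywhere, one is forced to lengthen the zigzag.'' This is a restatement of the problem, not a solution. The paper resolves it in two concrete pieces you did not supply: (i) it isolates precisely which isogeny kernels can be realized as the image of an anti-symmetric $\phi_\alpha\colon X[n]\to\widehat X[n]$ --- the \emph{spectrally paired} ones, i.e.\ kernels of the form $\bigoplus_i(\ZZ/a_i\ZZ)^2$ (Lemma~\ref{lem:ker-to-im} and Theorem~\ref{thm:spectral-induce-symplectic-iso}); and (ii) it proves by explicit elementary-divisor matrix manipulations (Theorem~\ref{thm:isogeny-decomposition}) that any principal isogeny, after composing with a suitable $[N]$, factors into spectrally paired isogenies, with prime-to-$p$ preserved throughout. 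Without (i) and (ii) there is no argument; neither is obvious, and (ii) in particular relies on the hypothesis $g\ge 2$ in an essential way, a fact your sketch never touches.

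Two further, smaller inaccuracies. First, your reduction to prime-power degree is not where the paper goes --- the decomposition in Theorem~\ref{thm:isogeny-decomposition} works directly with the Smith normal form of $\ker f$ and does not pass through $\ell$-primary parts --- but this is harmless, merely a different (and unnecessary) preliminary reduction. Second, you claim that $\ker f\times(\ker f)^\perp$ having order $n^{2g}$ ``is precisely where the perfect-square hypothesis on $\deg f$ enters''; this is wrong. For \emph{any} finite subgroup $\ker f\subset X[n]$ one has $\lvert(\ker f)^\perp\rvert=n^{2g}/\lvert\ker f\rvert$ by non-degeneracy of the Weil pairing, so the product always has order $n^{2g}$. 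The perfect-square condition enters elsewhere: it is what allows $\ker f$ to be reorganized (after multiplication by $N$ and further zigzagging) into spectrally paired pieces, and it is what Polishchuk's analysis of the central extension in \S\ref{subsec:polishchuk} forces on $\lvert\psi(\widehat X)\cap\widehat Y\rvert$ --- neither of which your sketch pins down. As written, the proposal is a plausible plan with the decisive lemma left as an acknowledged gap.
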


As an immediate application, we obtain that derived isogeny is also preserved under specialization:

\begin{corollary}\label{cor:speciliazation}
Let $R$ be a discrete valuation ring (DVR) with residue field $k$ and fraction field $K$.
Assume that $\mathrm{char}(k)=0$.
Let $\mathcal{X} \to \Spec R$ and $\mathcal{Y} \to \Spec R$ be two families of abelian varieties.
If the generic fibers $\mathcal{X}_K$ and $\mathcal{Y}_K$ are derived isogenous, then so are the special fibers $\mathcal{X}_k$ and $\mathcal{Y}_k$.
\end{corollary}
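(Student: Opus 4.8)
The plan is to move the question out of the realm of twisted derived categories, where specialisation is problematic, into the realm of principal isogenies, where it is straightforward, using Theorem~\ref{thm:main-2} as the bridge. Note first that $\chara(K)=0$ as well, since $\QQ\subseteq R$. I would base change the zig-zag of twisted derived equivalences witnessing that $\mathcal{X}_K$ and $\mathcal{Y}_K$ are derived isogenous along $K\hookrightarrow\bar K$: twisted Fourier--Mukai kernels and the relevant Brauer classes pull back, so $\mathcal{X}_{\bar K}$ and $\mathcal{Y}_{\bar K}$ are still derived isogenous. By Theorem~\ref{thm:main-2} this produces a principal isogeny $f\colon\mathcal{X}_{\bar K}\to\mathcal{Y}_{\bar K}$, of degree a perfect square, when $g\geq 2$, and an isomorphism $\mathcal{X}_{\bar K}\xrightarrow{\sim}\mathcal{Y}_{\bar K}$ when $g=1$. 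Since $f$ is a morphism of finite type over $\bar K$, its field of definition is a finitely generated, hence finite, extension $K'$ of $K$.

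Next I would spread $f$ out over the special point. Let $R'$ be the localisation at a maximal ideal lying over $\mathfrak{m}_R$ of the integral closure of $R$ in $K'$; as $K'/K$ is separable, $R'$ is a DVR, finite over $R$, with fraction field $K'$, and since $k$ is algebraically closed its residue field is again $k$. Set $\mathcal{X}'=\mathcal{X}\times_R R'$ and $\mathcal{Y}'=\mathcal{Y}\times_R R'$; these are abelian schemes over $R'$ with generic fibres $\mathcal{X}_{K'},\mathcal{Y}_{K'}$ and special fibres $\mathcal{X}_k,\mathcal{Y}_k$. Because an abelian scheme over a Dedekind ring is the N\'eron model of its generic fibre, the isogeny $f$ (now over $K'$) extends uniquely to a homomorphism $\tilde f\colon\mathcal{X}'\to\mathcal{Y}'$ over $R'$; when $g=1$ the inverse isomorphism extends as well, and the two composites are identities because they are so over $K'$ and the generic fibres are schematically dense.

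The crux — and the only step I expect to require genuine care — is to verify that $\tilde f$ restricts to a principal isogeny (resp. an isomorphism) on the special fibre. The morphism $\tilde f$ is proper, being a morphism over $R'$ from a proper scheme to a separated one, and quasi-finite: over the generic point it is the finite morphism $f$, and $\tilde f(\mathcal{X}')$ is a closed irreducible subset containing the dense subset $\mathcal{Y}_{K'}$, hence equals $\mathcal{Y}'$, so $\tilde f_k\colon\mathcal{X}_k\to\mathcal{Y}_k$ is a surjection between abelian varieties of the same dimension $g$ and therefore an isogeny. A proper quasi-finite morphism is finite, and then miracle flatness — $\mathcal{X}'$ is Cohen--Macaulay, $\mathcal{Y}'$ is regular, the fibres of $\tilde f$ are $0$-dimensional — makes $\tilde f$ finite locally free, of rank constant on the connected scheme $\mathcal{Y}'$; hence $\deg\tilde f_k=\deg f$, a perfect square. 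Thus $\mathcal{X}_k$ and $\mathcal{Y}_k$ are principally isogenous when $g\geq 2$ (and isomorphic when $g=1$), and a final application of Theorem~\ref{thm:main-2} shows they are derived isogenous, as desired.
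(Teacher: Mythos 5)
The paper does not print a proof of this corollary; it is left as an application of Theorem~\ref{thm:main-2}, and your proof is a correct realization of exactly the intended route. You translate the derived isogeny of the generic fibres into a principal isogeny (or isomorphism when $g=1$) over $\bar{K}$ via Theorem~\ref{thm:main-2}, descend it to a finite extension $K'$, pass to a DVR $R'$ finite over $R$ with the same residue field, extend the isogeny over $R'$ using the N\'eron (mapping) property of abelian schemes over a Dedekind base, and then use properness, Zariski's main theorem, and miracle flatness to see that the specialization is again an isogeny of the same (perfect square) degree; one more application of Theorem~\ref{thm:main-2} finishes. The only point worth making explicit, which you rightly use but the corollary's statement leaves tacit, is that the residue field $k$ should be taken algebraically closed (as is the standing convention of the paper), both so that Theorem~\ref{thm:main-2} applies to the special fibres and so that the residue field of $R'$ is still $k$.
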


\begin{remark}{(Derived isogeny over positive characteristic fields)}
It is very interesting to know whether this holds when $\chara(k)>0$, especially for supersingular abelian varieties.
This needs a new approach as one has to deal with $p$-divisible derived isogenies.
The case of  abelian surfaces has been settled by Li and Zou \cite[Theorem 1.4.1]{LZ25}. 
We also remark that Bragg and Yang have studied isogenies between K3 surfaces over positive characteristic fields (\cf\cite[Theorem 1.2]{BY23}).
\end{remark}
A second application concerns derived isogenies between K3 surfaces and their associated Kuga--Satake varieties. It is shown in \cite{Huy19} that two complex projective K3 surfaces are derived isogenous if and only if there exists a rational Hodge isometry between their second rational cohomology groups. Combined with Theorem \ref{thm:main-2}, this implies the following.

\begin{corollary}\label{cor:KS-isogeous}
If two complex projective K3 surfaces are derived isogenous, then their associated Kuga--Satake abelian varieties are also derived isogenous, provided their dimension is at least $2$.
\end{corollary}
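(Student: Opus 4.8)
The plan is to run Huybrechts' criterion for isogenous K3 surfaces through Theorem~\ref{thm:main-2}. By hypothesis and \cite{Huy19}, a derived isogeny between $S_1$ and $S_2$ yields a rational Hodge isometry between their second cohomology groups, and since any Hodge isometry carries the transcendental lattice — the smallest rational Hodge substructure whose complexification contains $H^{2,0}$ — onto the transcendental lattice, we get a rational Hodge isometry $\phi\colon (T(S_1)_\QQ, q_1)\xlongrightarrow{\sim}(T(S_2)_\QQ, q_2)$; write $t$ for the common rank of $T(S_1)$ and $T(S_2)$. Let $KS(S_i)$ denote the Kuga--Satake abelian variety of the weight-two Hodge structure $T(S_i)$, normalized so that $H_1(KS(S_i),\ZZ)=C^{+}(T(S_i),\ZZ)$ with its induced weight-one Hodge structure; then $\dim KS(S_i)=2^{t-2}$, so the standing hypothesis $\dim KS(S_i)\ge 2$ is exactly the condition $t\ge 3$. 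As $KS(S_1)$ and $KS(S_2)$ are abelian varieties over $\CC$, Theorem~\ref{thm:main-2} reduces the corollary to producing a \emph{principal} isogeny between them.

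First I would construct the isogeny via functoriality of the Kuga--Satake construction. An isometry of rational quadratic Hodge structures extends uniquely to an isomorphism of even Clifford algebras compatible with the weight-one Hodge structures, so $\phi$ induces an isomorphism of polarizable rational Hodge structures
\[
C^{+}(\phi)\colon H_1(KS(S_1),\QQ)\xlongrightarrow{\sim} H_1(KS(S_2),\QQ);
\]
clearing denominators by a suitable $N\in\ZZ_{>0}$ turns this into a genuine isogeny $g\colon KS(S_1)\to KS(S_2)$ with $\deg g = N^{2\dim KS(S_1)}\,\lvert\det C^{+}(\phi)\rvert$, the determinant being taken in $\ZZ$-bases of the integral even Clifford algebras $C^{+}(T(S_i),\ZZ)$. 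Since $N^{2\dim KS(S_1)}$ is a perfect square, $\deg g$ and $\lvert\det C^{+}(\phi)\rvert$ represent the same class in $\QQ^{\times}/(\QQ^{\times})^{2}$.

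The crux is that this class is trivial. Filtering $C^{+}(T(S_i),\ZZ)$ by number of Clifford factors, the associated graded is the free $\ZZ$-module $\bigoplus_{k\ \mathrm{even}}\Lambda^{k}T(S_i)$, and $C^{+}(\phi)$ is a filtered isomorphism inducing $\bigoplus_{k\ \mathrm{even}}\Lambda^{k}\phi$ on graded pieces, whence
\[
\det C^{+}(\phi)=\prod_{k\ \mathrm{even}}\det(\Lambda^{k}\phi)=\prod_{k\ \mathrm{even}}(\det\phi)^{\binom{t-1}{k-1}}=(\det\phi)^{\,2^{t-2}}.
\]
For $t\ge 3$ the exponent $2^{t-2}$ is even, so $\det C^{+}(\phi)$ is a square in $\QQ^{\times}$; hence $\deg g$ is a perfect square, $g$ is a principal isogeny, and Theorem~\ref{thm:main-2} gives that $KS(S_1)$ and $KS(S_2)$ are derived isogenous. (No unimodularity of $H^{2}$ is needed: the even Clifford functor squares the determinant of the isometry, and this squaring is by an even power precisely when $\dim KS\ge 2$, which is why that is the right hypothesis.)

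I expect the only genuinely delicate point to be the normalization in the first paragraph together with the bookkeeping above: one must make sure that the integral lattice $H_1(KS(S_i),\ZZ)$ is literally $C^{+}(T(S_i),\ZZ)$ — otherwise a spurious index could creep in when comparing $S_1$ and $S_2$, whose integral transcendental lattices need not even be isometric — and that ``degree modulo squares'' is then faithfully computed by $\det C^{+}(\phi)$. Everything else is formal input: \cite{Huy19}, functoriality of the Kuga--Satake construction, and Theorem~\ref{thm:main-2}.
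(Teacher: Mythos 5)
Your proof is correct and arrives at the same reduction as the paper: via Theorem~\ref{thm:main-2} it suffices to produce a principal isogeny between the Kuga--Satake varieties, and the even Clifford functor raises determinants to the power $2^{t-2}$, which is even exactly when $\dim\mathrm{KS}\ge 2$. The variation is in how the isogeny is realized: you scale the rational isomorphism $C^{+}(\phi)$ to clear denominators and compute $\det C^{+}(\phi)=(\det\phi)^{2^{t-2}}$ directly via the Clifford filtration, whereas the paper chooses a common finite-index sublattice $\rT'(X_i)\subset\rT(X_i)$ realizing the integral Hodge isometry, uses $\mathrm{KS}(\rT'(X_1))\cong\mathrm{KS}(\rT'(X_2))$ as an intermediate node, and shows each transition isogeny $\mathrm{KS}(\rT'(X_i))\to\mathrm{KS}(\rT(X_i))$ has square degree $d_i^{2^{r-2}}$. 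Both routes rest on identifying $\mathrm{gr}\,C^{+}(\cdot)$ with $\bigoplus_{k\ \mathrm{even}}\Lambda^{k}(\cdot)$; yours is marginally leaner for avoiding the auxiliary lattice and recording the ``$\det C^{+}$ raises $\det$ to an even power'' heuristic as an explicit identity, while the paper's stays within integral sublattice inclusions. The numerical content is identical.
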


Let us briefly recall the construction. For a complex projective K3 surface $S$ let $V$ denote the rational transcendental lattice, i.e., the orthogonal complement in $\mathrm{H}^2(S, \mathbb{Z})$ of the Néron--Severi group, equipped with its natural weight‑$2$ Hodge structure. Consider the even Clifford algebra $C^+(V)$ attached to the quadratic form on $V$. The Hodge structure on $V$ induces a Hodge structure of weight $1$ on $C^+(V_{\mathbb{R}})$. Explicitly, choose an oriented orthonormal basis $(e_1, e_2)$ of the real subspace $(V^{-1,1} \oplus V^{1,-1})$, set $e_+ = e_1 e_2 \in C^+(V_{\mathbb{R}})$, and define a complex structure on $C^+(V_{\mathbb{R}})$ by left multiplication by $e_+$. The associated Kuga--Satake variety of $S$ is the complex torus
\[
\operatorname{KS}(S) \coloneq C^+(V_{\mathbb{R}})\,/\,C^+(V)\,.
\]
For full details we refer to the original works of Satake \cite{Satake_1966} and Kuga--Satake \cite{zbMATH03350035}, and to the exposition by van Geemen \cite{zbMATH01445141}.

\begin{remark}
Corollary \ref{cor:KS-isogeous} remains valid over arbitrary algebraically closed fields, provided that the associated Kuga--Satake variety exists. For example, this is always the case when the Picard number of the K3 surface is at most $18$. See Remark \ref{rmk:KSexists} for more details. 
\end{remark}

\subsection{Idea of the Proof}

The difficult direction of Theorem~\ref{thm:main} is to deduce the existence of a symplectic isomorphism $A_{(X_1,\alpha_1)} \cong_{\text{symplectic}} A_{(X_2,\alpha_2)}$ from the given derived equivalence $\bdc(X_1,\alpha_1) \simeq \bdc(X_2,\alpha_2)$. Our approach generalizes Orlov's method from the untwisted case, with the key challenge being the identification of the appropriate twist data for the symplectic abelian variety. The proof proceeds through the following steps:

\subsubsection*{1. Construction of Twisted Orlov Functor}

In the untwisted case, for an abelian variety $X$, there is a natural derived equivalence between $A_X = X \times \widehat{X}$ and $X \times X$ induced by the Poincar\'e line bundle (\cf\cite[Theorem 2.10]{Orl02}).

For a twisted abelian variety $(X,\alpha)$, we establish a natural twisted derived equivalence:
\[
\bXi_{X,\alpha}\colon\bdc(A_{(X,\alpha)}, q^*_X\alpha) \simeq \bdc(X \times X, \alpha^{-1} \boxtimes \alpha)\,.
\]
This construction is based on the theory of the equivariant category and provides the fundamental bridge between the twisted derived category and the equivariant derived category.

\subsubsection*{2. Action of Autoequivalence}

For any $z\in A_{(X,\alpha)}$, one can associate a natural autoequivalence
\[
\bdc(X,\alpha)\longrightarrow\bdc(X,\alpha)
\]
obtained by translation and tensor product with line bundles,
and there is an associated adjoint equivalence
\[ \bad_z\colon \bdc(X\times X, \alpha^{-1}\boxtimes \alpha)\longrightarrow \bdc(X\times X, \alpha^{-1}\boxtimes \alpha)\,.\]
A crucial property between $\bad_z$ and the equivalence $\bXi_{X,\alpha}$ in \emph{Step 1} is that the induced adjoint autoequivalence
\[
\bXi^{-1}_{X,\alpha} \circ \bad_z \circ \bXi_{X,\alpha}\colon\bdc(A_{(X,\alpha)}, q^*_X\alpha) \longrightarrow \bdc(A_{(X,\alpha)}, q^*_X\alpha)
\]
is always given by the tensor product with a line bundle.

\subsubsection*{3. Construction of Symplectic Equivalences}

More generally, given any derived equivalence $\Phi\colon \bdc(X_1,\alpha_1) \simeq \bdc(X_2,\alpha_2)$, there is an induced adjoint equivalence:
\[
\bAd_{\Phi}\colon \bdc(X_1\times X_1, \alpha^{-1}_1 \boxtimes \alpha^{\vphantom{1}}_1) \longrightarrow \bdc(X_2\times X_2, \alpha_2^{-1} \boxtimes \alpha^{\vphantom{2}}_2)\,.
\]
Using the computation in \emph{Step 2}, we can show that the composition
\[
\bXi^{-1}_{X_2,\alpha_2}\circ\bAd_{\Phi}\circ\bXi_{X_1,\alpha_1}\colon
\bdc(A_{(X_1,\alpha_1)}, q_{X_1}^\ast \alpha_1 )\longrightarrow\bdc(A_{(X_2,\alpha_2)}, q_{X_2}^\ast \alpha_2)
\]
is induced by a symplectic isomorphism. This establishes the desired symplectic isomorphisms of their associated symplectic abelian varieties.

For the proof of Theorem \ref{thm:main-2} and Theorem \ref{thm3}, we adopt a strategy analogous to the surface case. Specifically, we decompose the given principal isogeny into a sequence of simplest isogenies, each induced by a twisted derived equivalence. In the context of abelian surfaces, such a decomposition, coming from the Cartan--Dieudonné decomposition for orthogonal transformations, can be realized via the second cohomology, where the simplest isogenies correspond to symmetries. This approach is rooted in Shioda's trick, which is only valid for abelian surfaces (cf.~\cite{LZ25}). In the present work, however, we provide a decomposition from the perspective of the first cohomology.

The key idea is that any principal isogeny $X \to Y$ can be factored into a composition of what we call \emph{spectrally paired isogenies}:
\[
X = X_0 \to X_1 \to X_2 \to \cdots \to X_{n-1} \to X_n \to X_{n+1} = Y\,,
\]
such that each step $X_i \to X_{i+1}$ is induced by a twisted derived equivalence between $X_i$ and $X_{i+1}$.
Details will be given in \S \ref{sec:derived-isogeny}.

\subsection{Conventions}

We adopt the following conventions throughout this article:

\begin{itemize}
    \item For schemes $X_1, \dots, X_n$ and a subset $I \subseteq \{1, \dots, n\}$, let
    \[
    \pr_I: \prod_{i=1}^n X_i \to \prod_{j \in I} X_j
    \]
    denote the natural projection morphism.

    \item  To simplify notation in the computations, we do not distinguish between derived functors and their underived counterparts. Unless otherwise specified, functors between categories of coherent sheaves (such as pullback and pushforward) use the same notation for both derived and underived versions.

    \item For an abelian variety $X$, let $\widehat{X}$ denote its dual abelian variety and $\mathscr{P}_X$ the Poincaré bundle on $X \times \widehat{X}$. For any $\xi \in \widehat{X}$, let $\mathscr{P}_{X,\xi}$ denote the line bundle on $X$ corresponding to $\xi$. When clear from context, we write $\mathscr{P}$ (resp. $\mathscr{P}_\xi$) instead of $\mathscr{P}_X$ (resp. $\mathscr{P}_{X,\xi}$) for brevity.
\end{itemize}

\subsection*{Acknowledgments}
The authors would like to thank Ruxuan Zhang, Haitao Zou, Zaiyuan Chen and Julian Holstein for helpful discussions and valuable comments on an earlier draft of this work. 
Special thanks go to Daniel Huybrechts for posing the question of whether a  derived equivalence exists between a twisted abelian surface and its dual. Z.~Li is supported by the NSFC grants (No.~12171090 and No.~12425105) and the Shanghai Pilot Program for Basic Research (No.~21TQ00).
Z.~Li is also a member of LMNS.
Z.~Lu and Z.~Tang are supported by the NSFC grant (No.~12121001).

\section{Preliminary on twisted and equivariant derived categories}
\label{sec:basics}

In this section, we will review the theory of the equivariant category and discuss its relation with the twisted derived category. The main point is to recall how twisted derived categories, particularly for abelian varieties, can be described in terms of equivariant categories—a perspective that will greatly simplify many of the constructions to come.

\subsection{Twisted Sheaves}
\label{subsec:twisted_sheaves}

Let $(X, \alpha)$ be a twisted smooth projective variety over an algebraically closed field $k = \overline{k}$ with $\alpha \in \Br(X)$ of order $n$.

\begin{definition}
    Let \(\{\alpha_{ijk}\}_{i,j,k\in I}\) be a {\v C}ech \(2\)-cocycle representing $\alpha\in \mathrm{H}^2(X, \mathbb{G}_m)$ with respect to an \'etale covering \(\{U_i\}_{i \in I}\) of \(X\).
    An \emph{\(\alpha\)-twisted coherent sheaf} on \(X\) consists of the following data:
    \begin{itemize}
      \item a coherent sheaf \(\mathscr{F}_i\) on \(U_i\),
      \item an isomorphism \(\theta_{ij} \colon \mathscr{F}_j|_{U_{ij}} \xrightarrow{\,\sim\,} \mathscr{F}_i|_{U_{ij}}\),
    \end{itemize}
    satisfying the following conditions for all indices:
    \begin{enumerate}[label=(\roman*)]
      \item \(\theta_{ii} = \id_{\mathscr{F}_i}\),
      \item \(\theta_{ji} = \theta_{ij}^{-1}\),
      \item \(
            \theta_{ki} \circ \theta_{ij} \circ \theta_{jk} = \alpha_{ijk} \cdot \id_{\mathscr{F}_k|_{U_{ijk}}}
            \) on \(U_{ijk}\).
    \end{enumerate}
\end{definition}

The category of $\alpha$-twisted coherent sheaves is denoted $\Coh(X,\alpha)$,
with bounded derived category $\bdc(X,\alpha)$,
and up to equivalence they are independent of the choice of the {\v C}ech cocycle \(\{\alpha_{ijk}\}_{i,j,k\in I}\) that represents $\alpha$.

In many situations, there is a natural connection between twisted categories and equivariant categories. While twisted categories can be complicated to work with directly, their description via equivariant categories often provides a more accessible framework for computations.

\subsection{Equivariant Categories}
\label{subsection:equivariant_categories}

We now review the formal framework for group actions on categories and their equivariant counterparts.
Let $G$ be a finite group acting on a $k$-linear additive category $\cD$ through data $(\rho, \sigma)$.
Following Beckmann--Oberdieck \cite[\S 2.1]{BO23}, such a \emph{group action} consists of two collections of data:

\begin{enumerate}[label=(\roman*)]
    \item For each \(g \in G\), an \emph{autoequivalence} \(\rho_g \colon \cD \to \cD\),
    \item For each pair $g$, $h \in G$, a \emph{natural isomorphism} \(\sigma_{g,h} \colon \rho_g \circ \rho_h \to \rho_{gh}\) satisfying the \emph{cocycle condition}:
    for each triple $g$, $h$, $k \in G$, the diagram commutes:
    \[\begin{tikzcd}[sep=large]
        \rho_g \circ \rho_h \circ \rho_k \ar[r, "\rho_g(\sigma_{h,k})"] \ar[d, "\sigma_{g,h} \circ \rho_k"'] &
        \rho_g \circ \rho_{hk} \ar[d, "\sigma_{g,hk}"] \\
        \rho_{gh} \circ \rho_k \ar[r, "\sigma_{gh,k}"'] &
        \rho_{ghk}\mathrlap{\,.}
    \end{tikzcd}\]
\end{enumerate}
These data define how the group $G$ acts on both objects and compositions of autoequivalences. To study invariant objects under this action, we introduce:

\begin{definition}[Equivariant Category]
\label{def:equivariant_category}
The \emph{equivariant category} \(\cD_{G,\rho,\sigma}\) consists of
\begin{enumerate}[label={\bfseries(\roman*)}]
    \item \textbf{Objects:} Pairs \((E, \theta)\) where \(E \in \cD\) and \(\theta = \{\theta_g \colon \rho_g(E) \to E\}_{g \in G}\) satisfy the compatibility condition: for all \(g, h \in G\), the following diagram commutes:
    \[\begin{tikzcd}[sep=large]
        \rho_g(\rho_h (E)) \ar[r, "\rho_g(\theta_h)"] \ar[d, "\sigma_{g,h}^E"'] &
        \rho_g(E) \ar[d, "\theta_g"] \\
        \rho_{gh}(E) \ar[r, "\theta_{gh}"'] & E \mathrlap{\,.}
    \end{tikzcd}\]
    Such \(\theta\) is called a \emph{\((\rho,\sigma)\)-linearization} of \(E\).

    \item \textbf{Morphisms:} A morphism \(\phi\colon (E,\theta) \to (E',\theta')\) is a morphism \(\phi\colon E \to E'\) in \(\cD\) satisfying:
    \[
    \phi \circ \theta_g = \theta'_g \circ \rho_g(\phi) \quad \forall g \in G\,.
    \]
    Equivalently, morphisms in the equivariant category are invariant under the $G$-action:
    \[
    g \cdot \phi \coloneq \theta'_g \circ \rho_g(\phi) \circ \theta_g^{-1}\,.
    \]
\end{enumerate}

\paragraph{\bfseries Geometric Convention:} When \(\cD\) is a category arising from geometry (e.g., \(\cD = \Coh(Y)\) or \(\bdc(Y)\) for a scheme \(Y\)) and the group action is induced by an untwisted geometric action on $Y$ (i.e., $\sigma$ is trivial), we simplify the notation to \(\cD_G\).

\paragraph{\bfseries Inflation and Forgetful Functors:} A standard way to construct equivariant objects is provided by the \emph{Inflation functor} (\cf\cite[\S 3.1]{Plo05}\cite[\S 1.4]{Plo07}):
\[
\Inf_{\rho,\sigma} \colon \cD \longrightarrow \cD_{G,\rho,\sigma},\quad E \longmapsto \Bigl( \bigoplus_{g\in G} \rho_g(E),  \Theta \Bigr)\,,
\]
where the linearization \(\Theta = \{\Theta_h\}_{h\in G}\) is defined component-wise for each \(h \in G\):
\[
\Theta_h \colon \rho_h \Bigl( \bigoplus_{g\in G} \rho_g (E) \Bigr) \xlongrightarrow{\ \cong\ } \bigoplus_{g\in G} \rho_h(\rho_g (E)) \xlongrightarrow{\ \oplus_g\, \sigma_{h,g}^E\ } \bigoplus_{g\in G} \rho_{hg} (E)\,.
\]
Conversely, one also has the \emph{forgetful functor}
\[
\bUp\colon \mathcal{D}_{G,\rho,\sigma}\longrightarrow\mathcal{D},\quad(F,\Theta)\longmapsto F\,,
\]
which omits the linearization.
\end{definition}

The equivariant category behaves well under (normal) subgroup restrictions:

\begin{proposition}[{\cite[Proposition 3.3]{BO23}}]
\label{prop:successive_equivariant}
Let \(H \trianglelefteq G\) be a normal subgroup.
A \(G\)-action \((\rho,\sigma)\) on \(\cD\) naturally induces a \(G\)-action on \(\cD_{H,\rho|_H,\sigma|_H}\), making the forgetful functor \[ \cD_{H,\rho|_H,\sigma|_H} \longrightarrow \cD\] \(G\)-equivariant.
Moreover, there is a canonical equivalence:
\[
\cD_{G,\rho,\sigma} \simeq \bigl( \cD_{H,\rho|_H,\sigma|_H} \bigr)_{G/H,\,\overline{\rho},\,\overline{\sigma}}\,,
\]
where \(\overline{\rho}\), \(\overline{\sigma}\) denote the induced action and the cocycle of the quotient group \(G/H\).
\end{proposition}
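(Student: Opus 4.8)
The plan is to follow the standard ``Clifford theory for categories'' argument: build the induced $G$-action on $\cD_H := \cD_{H,\rho|_H,\sigma|_H}$ by hand, observe that its restriction to $H$ is canonically trivialized, descend along $H \trianglelefteq G$ to a $G/H$-action, and then match the two equivariant categories on objects and morphisms. I record the constructions; all verifications are cocycle/coherence diagram chases, and I flag below where the real bookkeeping lies.

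\emph{Step 1: the induced $G$-action on $\cD_H$.} Fix $g \in G$. Since $H$ is normal, $h \mapsto g^{-1}hg$ preserves $H$. I would define $\tilde\rho_g \colon \cD_H \to \cD_H$ on an object $(E,\theta)$, with $\theta = \{\theta_h \colon \rho_h E \to E\}_{h\in H}$, by $\tilde\rho_g(E,\theta) = (\rho_g E, \theta^{(g)})$, where for $h \in H$ the map $\theta^{(g)}_h \colon \rho_h\rho_g E \to \rho_g E$ is the composite
\[
\rho_h \rho_g E \xrightarrow{\ \sigma^E_{h,g}\ } \rho_{hg} E = \rho_{g(g^{-1}hg)} E \xrightarrow{\ (\sigma^E_{g,\,g^{-1}hg})^{-1}\ } \rho_g \rho_{g^{-1}hg} E \xrightarrow{\ \rho_g(\theta_{g^{-1}hg})\ } \rho_g E\,,
\]
and on morphisms by $\rho_g$ itself. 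The cocycle condition for $\sigma$ and the linearization relation for $\theta$ show $\theta^{(g)}$ is again an $H$-linearization, so $\tilde\rho_g$ is well defined; it is an autoequivalence with quasi-inverse $\tilde\rho_{g^{-1}}$. The natural isomorphisms $\tilde\sigma_{g,g'} \colon \tilde\rho_g\tilde\rho_{g'} \Rightarrow \tilde\rho_{gg'}$ are induced by $\sigma_{g,g'}$, once one checks that each $\sigma^E_{g,g'}$ is a morphism in $\cD_H$ for the linearizations produced above; the cocycle condition for $\tilde\sigma$ is then inherited from that for $\sigma$. By construction $\bUp\circ\tilde\rho_g = \rho_g\circ\bUp$ compatibly with $\tilde\sigma$ and $\sigma$, i.e. the forgetful functor is $G$-equivariant.

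\emph{Step 2: descent to $G/H$.} For $h \in H$ and $(E,\theta) \in \cD_H$, the linearization map $\theta_h \colon \rho_h E \to E$ is exactly a morphism $\tilde\rho_h(E,\theta) \to (E,\theta)$ in $\cD_H$ (this is the linearization compatibility of $\theta$), and these assemble into a natural isomorphism $\epsilon_h \colon \tilde\rho_h \Rightarrow \id_{\cD_H}$; the family $\{\epsilon_h\}_{h\in H}$ is compatible with $\tilde\sigma$. So the $G$-action on $\cD_H$ is trivial on $H$ in the sense required to descend. Choosing a set-theoretic section $s\colon G/H \to G$, I would set $\bar\rho_{\bar g} := \tilde\rho_{s(\bar g)}$ and define $\bar\sigma_{\bar g,\bar g'}$ by correcting $\tilde\sigma_{s(\bar g),s(\bar g')}$ by $\epsilon_{h_0}$, where $h_0 \in H$ is determined by $s(\bar g)s(\bar g') = s(\bar g\bar g')\,h_0$. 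The cocycle condition for $\bar\sigma$ follows from that for $\tilde\sigma$ together with the coherence of $\{\epsilon_h\}$, and a different choice of section yields an equivalent $G/H$-action.

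\emph{Step 3: the equivalence, and the main obstacle.} Define $\Psi\colon \cD_{G,\rho,\sigma}\to(\cD_H)_{G/H,\bar\rho,\bar\sigma}$ on $(E,\{\theta_g\}_{g\in G})$ by taking $(E,\{\theta_h\}_{h\in H})\in\cD_H$ together with the $G/H$-linearization whose value at $\bar g$ is $\theta_{s(\bar g)}$ (viewed, via the identifications of Step 1--2, as a morphism $\bar\rho_{\bar g}(E,\theta|_H)\to(E,\theta|_H)$ in $\cD_H$); the $(\rho,\sigma)$-relations for $\{\theta_g\}$ give both the $H$-equivariance of $\theta_{s(\bar g)}$ and the $(\bar\rho,\bar\sigma)$-linearization condition. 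Conversely, from $((E,\theta),\{\bar\theta_{\bar g}\})$ one reconstructs a full $G$-linearization by $\theta_g := \bar\theta_{\bar g}\circ\rho_{s(\bar g)}(\theta_h)\circ(\sigma^E_{s(\bar g),h})^{-1}$ for $g = s(\bar g)\,h$ with $h\in H$, and checks the $(\rho,\sigma)$-cocycle relation; on morphisms, $\phi\colon E\to E'$ is $G$-invariant iff it is $H$-invariant and its class is $G/H$-invariant, which gives full faithfulness. The genuine content is this bookkeeping in Steps 2--3: descending to $G/H$ forces the section $s$, and checking that $\bar\sigma$ is a cocycle, that $\Psi$ lands in the equivariant category and is independent of $s$ up to equivalence, and that the reconstruction inverts $\Psi$, are all diagram chases simultaneously juggling $\sigma$, $\tilde\sigma$, $\bar\sigma$ and the trivializations $\epsilon_h$. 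Nothing here is deep, but it is exactly the coherence argument where associativities and the ordering of cosets are easy to mishandle; I would organize it one relation at a time (or phrase it inside the $2$-category of $k$-linear categories with $G$-action) to keep it clean. The remaining assertions — that $\tilde\rho_g$ is an autoequivalence and that $\bUp$ is $G$-equivariant — are purely formal.
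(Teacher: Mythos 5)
The paper does not prove this proposition; it simply cites it as \cite[Proposition 3.3]{BO23}, so there is no internal proof to compare against. Your argument is the standard one and is, to the level of detail you give, correct: in Step~1 the conjugated $H$-linearization
\[
\theta^{(g)}_h \colon \rho_h\rho_g E \xrightarrow{\sigma^E_{h,g}} \rho_{hg}E \xrightarrow{(\sigma^E_{g,\,g^{-1}hg})^{-1}} \rho_g\rho_{g^{-1}hg}E \xrightarrow{\rho_g(\theta_{g^{-1}hg})} \rho_g E
\]
is the right one (one needs $g^{-1}hg$, not $ghg^{-1}$, for this left-action convention, and you got that right), the naturality of $\epsilon_h$ in Step~2 is indeed exactly the definition of a morphism in $\cD_H$ because $\tilde\rho_h(\phi)=\rho_h(\phi)$, and the section-and-correction construction of $\overline{\sigma}$ and the inverse functor in Step~3 are the expected ones. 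You are also right to flag the honest gaps: the cocycle condition for $\overline{\sigma}$ requires a coherence of the $\epsilon_h$ with conjugation (roughly, a compatibility between $\epsilon_{ghg^{-1}}$ and $\tilde\rho_g\epsilon_h\tilde\rho_g^{-1}$), and this is precisely the diagram chase where the $\sigma$-cocycle, the $\theta$-linearization axiom, and the naturality of $\epsilon$ all get used simultaneously; since you did not carry it out, your write-up remains a sketch rather than a complete proof. The only thing I would tighten besides that: the statement claims a \emph{canonical} equivalence, whereas your construction a priori depends on the section $s\colon G/H\to G$; you assert independence up to equivalence, but for full rigor one should either exhibit the comparison natural isomorphism between the constructions for two sections, or (as one variant of the argument does) avoid the section entirely by indexing the quotient linearization by cosets and checking well-definedness directly. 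None of this affects correctness of the approach — it is, as far as one can tell, the same decomposition-via-cosets argument as in \cite{BO23}.
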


\noindent
This hierarchical structure allows one to analyze group actions through successive quotients, which is particularly useful for Galois coverings.

\subsection{Equivariant Categories for Galois Covers}
\label{subsection:galois_equivariant}

 We now specialize to Galois coverings between varieties.
 Let $Y$ be a smooth projective variety over $k$ with a finite Galois covering $\pi \colon  Y \to X = Y/G$, where $G$ is the Galois group of $\pi$.
Consider the \emph{right} $G$-action on $Y$,
\[
R_g \colon  Y \xlongrightarrow{\sim} Y, \quad y \longmapsto y \cdot g, \quad \forall g \in G\,,
\]
this geometric action induces a \emph{left} $G$-action $(\boldsymbol{\varrho}, \id)$ on $\Coh(Y)$ via pullback:
\[
\bvr_g \coloneq R_g^* \colon \Coh(Y) \xlongrightarrow{\sim} \Coh(Y), \quad \bvr_g \circ \bvr_h = \bvr_{gh} \quad \forall g, h \in G\,,
\]
where the cocycle $\sigma_{g,h} = \id$ is trivial.
As in Definition \ref{def:equivariant_category}, the equivariant category $\Coh(Y)_{G}$ admits a canonical geometric interpretation (\cf\cite[Example 3.4]{BO23}):
\[
\Coh(Y)_{G} \simeq \Coh(Y/G) = \Coh(X)\,.
\]

The action extends naturally to the derived category $\bdc(Y)$, yielding the equivariant category $\bdc(Y)_{G}$.
When $|G|$ is invertible in $k$, Elagin \cite[Theorem 1.1]{Ela15} establishes the equivalences:
\[
\bdc(Y)_{G} \simeq \bdc\bigl(\Coh(Y)_{G}\bigr) \simeq \bdc(X)\,.
\]
\medskip
We now generalize to the case of \emph{twisted actions} where the composition isomorphisms are no longer assumed to be trivial.
Since we have
\[
\Hom\bigl(\id_{\Coh(Y)}, \id_{\Coh(Y)}\bigr) = k\,,
\]
the composition isomorphisms are parameterized by normalized $2$-cocycles $\sigma \in \rZ^2(G, \mathbb{G}_m)$:

\begin{definition}[Twisted Galois Action]
For any $\sigma \in \rZ^2(G, \mathbb{G}_m)$, the \emph{$\sigma$-twisted $G$-action} $(\bvr, \sigma)$ on $\Coh(Y)$ consists of the following data:
\begin{itemize}
    \item Autoequivalences $\bvr_g = R_g^* \colon \Coh(Y) \to \Coh(Y)$ (same as untwisted case) for any $g\in G$,
    \item Natural isomorphisms for every pair $g, h \in G$:
    \[
    \sigma_{g,h} \colon \bvr_g \circ \bvr_h \xlongrightarrow{\, \sigma_{g,h} \cdot \id_{\Coh(Y)} \,} \bvr_{gh}\,,
    \]
    satisfying $\sigma_{g,hk} \cdot \sigma_{h,k} = \sigma_{g,h} \cdot \sigma_{gh,k}$.
\end{itemize}
The resulting equivariant category (see Definition \ref{def:equivariant_category}) is denoted $\Coh(Y)_{G,\bvr,\sigma}$.
\end{definition}

This construction generalizes the untwisted case, which corresponds to $\sigma_{g,h} \equiv 1$. A natural question arises:
\begin{question*}
     Do $\Coh(Y)_{G,\rho,\sigma}$ and $\bdc(Y)_{G,\rho,\sigma}$ admit a geometric interpretation via the quotient variety $X = Y/G$?
\end{question*}

The following result provides an affirmative answer by establishing a concrete connection to the geometry of the quotient map.

\begin{proposition}[Twisted Equivariant-Quotient Equivalence]
\label{prop:twisted-equivariant-equivalence}
There exists a canonical homomorphism
\[\mathrm{H}^2(G,\mathbb{G}_m) \longrightarrow \Br(X)\,.\]
Moreover, for any cohomology class $[\sigma] \in \mathrm{H}^2(G,\mathbb{G}_m)$ with image $\alpha \in \Br(X)$, there are canonical equivalences:
    \[
    \Coh(X,\alpha) \simeq \Coh(Y)_{G,\bvr,\sigma}\,.
    \]
When $|G|$ is invertible in $k$, one also obtains
$\bdc(X,\alpha) \simeq \bdc(Y)_{G,\bvr,\sigma}.$
\end{proposition}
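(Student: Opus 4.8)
The plan is to construct the homomorphism $\mathrm{H}^2(G,\mathbb{G}_m) \to \Br(X)$ explicitly from cocycle data, and then realize both $\Coh(X,\alpha)$ and $\Coh(Y)_{G,\bvr,\sigma}$ as the same category of ``equivariant sheaves with a twist'' on $Y$, identified via \'etale descent along $\pi$. First I would fix a normalized cocycle $\sigma \in \rZ^2(G,\mathbb{G}_m)$ and recall that $\pi\colon Y \to X$ is an \'etale $G$-torsor (after possibly restricting to the locus where $\pi$ is \'etale; since $Y \to X$ is Galois of degree prime to $\chara(k)$ the ramification issues can be handled as in the standard reference, or one works with the canonical covering of the relevant open and extends). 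An $\alpha$-twisted sheaf on $X$ pulls back to an honest sheaf on $Y$ together with descent data twisted by the pullback of the \v{C}ech cocycle; the point is that the \v{C}ech $2$-cocycle representing $\alpha$ can be chosen to come from the simplicial nerve of the groupoid $Y \times_X Y \rightrightarrows Y$, i.e. from $Y \times G \rightrightarrows Y$, and a normalized cocycle on that simplicial object is precisely a group $2$-cocycle $\sigma \in \rZ^2(G,\mathbb{G}_m)$ (with $\mathbb{G}_m$ here meaning $\mathrm{H}^0(Y,\mathcal{O}_Y^\times) = k^\times$, using that $Y$ is proper and connected). This assignment $\sigma \mapsto \alpha$ is manifestly a group homomorphism on cocycles, descends to cohomology, and gives the desired canonical map.

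Next I would set up the equivalence $\Coh(X,\alpha) \simeq \Coh(Y)_{G,\bvr,\sigma}$ directly. Given an $\alpha$-twisted sheaf $(\mathscr{F}_i,\theta_{ij})$ on $X$ with $\alpha$ represented by the cocycle coming from $\sigma$ as above, pull back to $Y$: \'etale descent along the $G$-torsor $\pi$ identifies the data of a sheaf on $X$ with a sheaf $E$ on $Y$ equipped with isomorphisms $\psi_g\colon R_g^*E \xrightarrow{\sim} E$ for $g\in G$ satisfying a cocycle relation; introducing the twist by $\sigma$ replaces the strict cocycle relation $\psi_{gh} = \psi_g \circ R_g^*\psi_h$ by $\psi_{gh} = \sigma_{g,h}\cdot \psi_g\circ R_g^*\psi_h$, which is exactly a $(\bvr,\sigma)$-linearization in the sense of Definition~\ref{def:equivariant_category} (with the roles of $\theta_g$ and its inverse arranged to match conventions). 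Morphisms correspond under this dictionary to $G$-invariant morphisms on $Y$, i.e. to morphisms in $\Coh(Y)_{G,\bvr,\sigma}$. Both functors (pullback-and-linearize, and take $G$-invariants/descend) are quasi-inverse by faithfully flat descent, giving the claimed equivalence of abelian categories. This specializes to the untwisted statement $\Coh(Y)_G \simeq \Coh(X)$ already recalled in \S\ref{subsection:galois_equivariant} when $\sigma$ is trivial.

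For the derived statement, once $\gcd(|G|,\chara(k))=1$ the equivariant category $\Coh(Y)_{G,\bvr,\sigma}$ has enough injectives and the forgetful functor is exact and faithful, so by Elagin's theorem \cite[Theorem 1.1]{Ela15} (applied to the $\sigma$-twisted action, which is covered by his formalism since he works with arbitrary linearizations and the prime-to-$p$ hypothesis is exactly what makes the relevant idempotents available) one gets $\bdc(Y)_{G,\bvr,\sigma} \simeq \bdc(\Coh(Y)_{G,\bvr,\sigma})$; combined with the abelian-category equivalence of the previous paragraph and the fact that $\bdc(\Coh(X,\alpha)) = \bdc(X,\alpha)$ by definition, this yields $\bdc(X,\alpha)\simeq \bdc(Y)_{G,\bvr,\sigma}$. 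The main obstacle I anticipate is not the derived bootstrapping — that is a black-box citation — but rather pinning down the ramification: $Y \to X$ need not be \'etale everywhere, so one must either argue that $\Coh$ of the full space agrees with $\Coh$ of the \'etale locus via a purity/normality argument (abelian varieties and smooth projective varieties make this clean), or phrase descent using the ramified-cover formalism. The cocycle bookkeeping matching $\theta$-conventions between the twisted-sheaf definition and the equivariant-category definition is the other place where care is needed, though it is purely formal.
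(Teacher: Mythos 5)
Your proposal follows essentially the same route as the paper's proof: identify $\rH^2(G,\mathbb{G}_m)$ with the \v Cech cohomology of the covering $Y\to X$ via $Y\times G\cong Y\times_X Y$, interpret $\sigma$-twisted linearizations as descent data for $\alpha$-twisted sheaves, and invoke Elagin for the derived upgrade. The only thing you worry about that the paper does not is ramification; this is a non-issue here because the paper's whole argument (and the comparison $\rH^2(G,\mathbb{G}_m)\cong\check{\rH}^2(\{Y\to X\},\mathbb{G}_m)$ it quotes) is stated for $\pi$ finite \'etale, and that is the only case used downstream — in Corollary~\ref{cor:av-twisted-equivariant-equivalence} the cover is $[n]_X$ with $n$ prime to the characteristic, hence \'etale. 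So the restriction-to-the-\'etale-locus fallback you propose is unnecessary in this context; simply read the hypothesis as ``finite \'etale Galois cover,'' as the paper's proof does.
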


\begin{proof}
Since $Y\to X$ is a finite \'etale covering,
the group cohomology $\rH^2(G,\mathbb{G}_m)$ is canonically isomorphic to the {\v C}ech cohomology $\check{\rH}^2({Y\to X},\mathbb{G}_m)$ (\cf\cite[Example III.2.6]{Mil80}).
After composing with the map from the {\v C}ech cohomology to the \'etale cohomology $\rH^2(X,\mathbb{G}_m)$,
we get the canonical map
\[
\mathrm{H}^2(G,\mathbb{G}_m) \cong \check{\mathrm{H}}^2(\{Y \to X\},\mathbb{G}_m) \longrightarrow \mathrm{H}^2(X,\mathbb{G}_m)\,.
\]
Note that there are natural isomorphisms:
  \begin{align*}
    &Y \times G \cong Y \times_X Y,& &(y, g)\longmapsto (y, yg)\,, \\
    &Y \times G \times G \cong Y \times_{X} Y \times_{X} Y,& &(y, g, h) \longmapsto (y, yg, ygh)\,.
  \end{align*}
These identifications allow us to interpret:
    \begin{itemize}
        \item The cocycle $\sigma$ as descent data for the covering;
        \item The $\sigma$-twisted $G$-equivariant sheaves as $\alpha$-twisted sheaves on $X$.
    \end{itemize}
These interpretations yield a canonical equivalence
\[
 \Coh(X,\alpha)\simeq \Coh(Y)_{G,\bvr,\sigma}\,,
\]
and the corresponding equivalence between the derived categories follows from \cite[Theorem 7.1]{Ela15}.
\end{proof}

\subsection{Twisted Sheaves on Abelian Varieties}
\label{subsec:twisted_abelian}

Let $(X,\alpha)$ be a twisted abelian variety with $\alpha \in \Br(X)[n]$. When $n$ is invertible in $k$, Proposition \ref{prop:twisted-equivariant-equivalence} interprets $\bdc(X,\alpha)$ as an equivariant category.

Let $[n]_X \colon X\to X$ denote the multiplication-by-$n$ homomorphism. The Kummer exact sequence induces:
\begin{equation}
\label{eq:lifting-Brauer}
0 \longrightarrow \mathrm{NS}(X)/n\mathrm{NS}(X) \xlongrightarrow{\iota} \mathrm{Hom}(\wedge^2 X[n], \mu_n) \xlongrightarrow{\delta} \text{Br}(X)[n] \longrightarrow 0.
\end{equation}
Thus $\alpha$ is represented by an alternating bilinear pairing
\[
e_\alpha \colon X[n] \times X[n] \longrightarrow \mu_n,
\]
which is unique modulo commutator forms $e^{L^{\otimes n}}|_{X[n]}$ for $L \in \Pic(X)$.

Via the Weil pairing, $e_\alpha$ corresponds to a skew-symmetric homomorphism
\begin{equation}
\label{eq:twist-hom}
\phi_\alpha \colon X[n] \longrightarrow \widehat{X}[n]\,,
\end{equation}
satisfying
\[
e_\alpha(\sigma_1,\sigma_2) = \langle \sigma_1, \phi_\alpha(\sigma_2) \rangle\,,
\]
where $\langle -, - \rangle\colon X[n] \times \widehat{X}[n] \to \mu_n$ is the restriction of the canonical Weil pairing.
By the surjectivity of the map
\[
\rH^2(X[n], \mathbb{G}_m) \longrightarrow \Hom(\wedge^2 X[n], \mu_n)\,,
\]
there exists a normalized $2$-cocycle $a \in \rZ^2(X[n], \mathbb{G}_m)$ such that
\begin{equation}
\label{eq:pairing-cocycle}
e_\alpha(\sigma_1,\sigma_2) = \frac{a_{\sigma_1,\sigma_2}}{a_{\sigma_2,\sigma_1}}\,.
\end{equation}
We then say $\alpha$ is represented by the $2$-cocycle $a$, and twisted sheaves can be described in terms of equivariant sheaves.  By Proposition \ref{prop:twisted-equivariant-equivalence}, there is a natural twisted $X[n]$-action $(\bvr, a)$  on $\Coh(X)$ and we have

\begin{corollary}
\label{cor:av-twisted-equivariant-equivalence}
Assume that $n$ is invertible in $k$. For any $2$-cocycle $a\in\rZ^2(X[n],\mathbb{G}_m)$ representing the Brauer class $\alpha$, the finite \'etale covering $[n]_X \colon X\to X$ induces equivalences
\[
\Coh(X,\alpha)\simeq\Coh(X)_{X[n],\bvr,a}
\]
and \(
\bdc(X,\alpha)\simeq\bdc(X)_{X[n],\bvr,a}\).
\end{corollary}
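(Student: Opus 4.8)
The statement is an application of Proposition~\ref{prop:twisted-equivariant-equivalence} to the Galois cover $[n]_X\colon X\to X$, so the plan is: (1)~check the hypotheses of that proposition, and (2)~identify the Brauer class attached to the cocycle $a$ with $\alpha$. For (1): since $\gcd(n,\chara(k))=1$ the group scheme $X[n]$ is \'etale, so $[n]_X$ is finite \'etale; it is Galois with group $X[n]$, the translations $R_\sigma\colon x\mapsto x+\sigma$ for $\sigma\in X[n]$ furnishing the Galois action via $[n]_X\circ R_\sigma=[n]_X$ and the isomorphism $X\times_X X\xrightarrow{\sim}X\times X[n]$, $(x,y)\mapsto(x,y-x)$. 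The induced left action of $X[n]$ on $\Coh(X)$ by pullback is $(\bvr,\id)$, and twisting the composition isomorphisms by $a\in\rZ^2(X[n],\mathbb{G}_m)$ gives the action $(\bvr,a)$. Thus Proposition~\ref{prop:twisted-equivariant-equivalence} yields a canonical homomorphism $\rH^2(X[n],\mathbb{G}_m)\to\Br(X)$ together with equivalences $\Coh(X,\beta)\simeq\Coh(X)_{X[n],\bvr,a}$ and, since $\gcd(n,\chara(k))=1$, also $\bdc(X,\beta)\simeq\bdc(X)_{X[n],\bvr,a}$, where $\beta\in\Br(X)$ denotes the image of $[a]$. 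It remains to prove $\beta=\alpha$.

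For step (2), recall $\rH^2(X[n],\mathbb{G}_m)=\rH^2(X[n],k^\times)$ is killed by $n$ (since $X[n]$ has exponent $n$, $k^\times$ is divisible, and $k^\times[n]=\mu_n$); hence $\beta\in\Br(X)[n]$, and by the displayed Kummer exact sequence $\alpha=\delta(e_\alpha)$, so it suffices to show $\beta=\delta(e_\alpha)$. The homomorphism $\rH^2(X[n],\mathbb{G}_m)\to\Br(X)$ of Proposition~\ref{prop:twisted-equivariant-equivalence} is the map from {\v C}ech cohomology of the cover $[n]_X$ to \'etale cohomology, and is natural in the coefficient sheaf. Because $n$ kills $\rH^2(X[n],\mathbb{G}_m)$, the Kummer sequence $1\to\mu_n\to\mathbb{G}_m\xrightarrow{n}\mathbb{G}_m\to1$ for the finite group $X[n]$ yields a surjection $\rH^2(X[n],\mu_n)\twoheadrightarrow\rH^2(X[n],\mathbb{G}_m)$; lift $a$ to a normalized cocycle $\tilde a\in\rZ^2(X[n],\mu_n)$. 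By naturality, $\beta$ equals $\delta$ applied to the image of $[\tilde a]$ under the {\v C}ech-to-\'etale edge map $\rH^2(X[n],\mu_n)\to\rH^2(X,\mu_n)\cong\Hom(\wedge^2 X[n],\mu_n)$. The crux is the claim that this edge map carries a normalized $2$-cocycle to its antisymmetrization; granting it, the image of $[\tilde a]$ is $(\sigma_1,\sigma_2)\mapsto\tilde a_{\sigma_1,\sigma_2}\tilde a_{\sigma_2,\sigma_1}^{-1}$, which equals the antisymmetrization of $a$ itself because $\tilde a$ differs from $a$ by a symmetric coboundary, hence equals $e_\alpha$ by the choice of $a$; therefore $\beta=\delta(e_\alpha)=\alpha$.

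To prove the crux one combines three ingredients: the edge isomorphism $\Hom(X[n],\mu_n)=\rH^1(X[n],\mu_n)\xrightarrow{\sim}\rH^1(X,\mu_n)$, which is an isomorphism since $[n]_X^\ast$ acts on $\rH^1(X,\mu_n)$ as multiplication by $n$, hence as $0$ (so every class descends); the identification $\rH^2(X,\mu_n)=\wedge^2\rH^1(X,\mu_n)$ valid for an abelian variety; and the compatibility of the cup product in group cohomology with the cup product in \'etale cohomology under these edge maps. Together these identify the edge map on $\rH^2$ with antisymmetrization. I expect this compatibility --- equivalently, matching the group-cohomological antisymmetrization of $a$ with the Weil-pairing description of $\Br(X)[n]$ furnished by the Kummer sequence --- to be the only nontrivial point; the \'etale and Galois properties of $[n]_X$, the torsion and lifting assertions for the cohomology of the finite group $X[n]$, and the final bookkeeping are routine, while the derived statement is built into Proposition~\ref{prop:twisted-equivariant-equivalence} (through Elagin's theorem) once $\gcd(n,\chara(k))=1$.
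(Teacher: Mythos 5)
Your strategy — apply Proposition~\ref{prop:twisted-equivariant-equivalence} to the Galois cover $[n]_X\colon X\to X$ and then match the Brauer class of $a$ with $\alpha$ — is exactly the route the paper takes; the paper simply asserts the matching in the discussion preceding the corollary (``we then say $\alpha$ is represented by the $2$-cocycle $a$''), and you have correctly isolated it as the one nontrivial point. Step (1) and the reduction to $\tilde a\in\rZ^2(X[n],\mu_n)$ are fine.

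The gap is in the justification of your ``crux.'' The three ingredients you cite — the inflation isomorphism on $\rH^1$, the identification $\rH^2(X,\mu_n)\cong\wedge^2\rH^1(X,\mu_n)$, and compatibility of inflation with cup products — only pin down the edge map $\rH^2(X[n],\mu_n)\to\rH^2(X,\mu_n)$ on classes that \emph{are} cup products of degree-$1$ classes; you then assert ``Together these identify the edge map on $\rH^2$ with antisymmetrization,'' but that does not follow. Since $X[n]\cong(\ZZ/n)^{2g}$, the antisymmetrization $\rH^2(X[n],\mu_n)\to\Hom(\wedge^2 X[n],\mu_n)$ has nontrivial kernel (the $\Ext^1_{\ZZ}(X[n],\mu_n)$ part, represented by symmetric cocycles), and to conclude that the edge map \emph{equals} antisymmetrization one must separately show that inflation kills this kernel. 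A clean way to close this is to observe that symmetric cocycle classes lie in the image of the Bockstein $\rH^1(X[n],\ZZ/n)\to\rH^2(X[n],\ZZ/n)$, that Bocksteins commute with inflation, and that Bocksteins vanish on $\rH^*(X,\ZZ/n)$ for an abelian variety; with that additional input the two maps agree on all of $\rH^2(X[n],\mu_n)$ and your argument goes through. (A minor side point: $\rH^2(X[n],\mathbb{G}_m)$ being killed by $n$ is not a consequence of $X[n]$ having exponent $n$ alone — finite-group cohomology is killed by the order, not the exponent — but rather of the identification $\rH^2(X[n],\mathbb{G}_m)\cong\Hom(\wedge^2 X[n],\mathbb{G}_m)$, valid because $k^\times$ is divisible.)
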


\section{Symplectic abelian varieties and Polishchuk's correspondence}
\label{sec:derived-torelli}

In this section, we recall Polishchuk's correspondence \cite{Pol96} between symplectic structures and twisted derived equivalences for abelian varieties. It includes the construction of symplectic biextensions and the notion of Lagrangian subvarieties; these will play a central role in the proof of Theorem~1.1. The key object for us is the symplectic abelian variety \(A_{(X,\alpha)}\) associated to a twisted abelian variety \((X,\alpha)\).

\subsection{Symplectic Biextensions and Lagrangian Subvarieties}
\label{subsec:polishchuk}

\begin{definition}[Biextension]
\label{def:biextension}
Let $A$ be an abelian variety and let \(\sigma \in \Aut(A^2)\) be the involution \(\sigma(z_1,z_2) = (z_2,z_1)\). Then
\begin{itemize}
    \item A \emph{biextension} of \(A^2\) is a line bundle \(L\) with isomorphisms:
          \[
          (\pr_1 + \pr_2, \pr_3)^*L \cong \pr_{13}^*L \otimes \pr_{23}^*L\,, \quad
          (\pr_1, \pr_2 + \pr_3)^*L \cong \pr_{12}^*L \otimes \pr_{13}^*L\,,
          \]
          satisfying cocycle conditions on \(A^3\) (see \cite[\S 10.3]{Pol03}).

    \item A biextension \(L\) is \emph{skew-symmetric} if equipped with an isomorphism \(\phi\colon \sigma^*L \cong L^{-1}\) and trivialization \(\Delta^*L \cong \mathcal{O}_A\) over the diagonal \(\Delta\colon A \hookrightarrow A^2\), compatible with \(\phi\).
    It is \emph{symplectic} if additionally the induced map \(\psi_L\colon A \to \widehat{A}\) (\(a \mapsto L|_{\{a\} \times A}\)) is an isomorphism.
    A \emph{symplectic isomorphism} between two symplectic abelian varieties $(A_1,L_1)$ and $(A_2,L_2)$ is an isomorphism $f\colon A_1\to A_2$ such that $(f\times f)^*L_2\cong L_1$, i.e.,
    the following diagram
    \[\begin{tikzcd}[sep=large]
        {A_1} & {A_2} \\
        {\widehat{A}_1} & {\widehat{A}_2}
        \arrow["f", from=1-1, to=1-2]
        \arrow["{\psi_{L_1}}"', from=1-1, to=2-1]
        \arrow["{\psi_{L_2}}", from=1-2, to=2-2]
        \arrow["{\widehat{f}}"', from=2-2, to=2-1]
    \end{tikzcd}\]
    is commutative.

    \item A subvariety \(X \subset A\) is \emph{isotropic} if \(L|_{X \times X} \cong \mathcal{O}_{X \times X}\).
    Moreover, it is \emph{Lagrangian} if it is isotropic and \emph{the restriction of $\psi_L$} on $X$ gives an isomorphism \(X \xlongrightarrow{\simeq} \widehat{A/X}\).
\end{itemize}
\end{definition}

Let \(A\) be an abelian variety equipped with a symplectic biextension \(P \otimes \sigma^* P^{-1}\) of \(A^2\),
and \(X\) and \(Y\) be Lagrangian subvarieties in \(A\) with the intersection \(X \cap Y\) finite.
Polishchuk \cite[\S 3]{Pol96} uses the isotropic line bundles on $X$ and $Y$ to associate a \emph{central extension}
\[
1 \longrightarrow \mathbb{G}_m \longrightarrow G \longrightarrow X\cap Y \longrightarrow 0\,,
\]
where \(G\) is the group scheme that parameterizes the isomorphisms between the symmetric structures on \(P|_{(X\cap Y)^2}\).
This defines a \emph{canonical} element
\begin{equation}
    e_X \in \mathrm{H}^2(A/X, \mathbb{G}_m)\,,
\end{equation}
which is independent of the choice of \(Y\) (\cf\cite[Proposition 3.1]{Pol96}). Furthermore, under the Lagrangian identifications
\[
\widehat{X} \cong A/Y \quad \text{and} \quad \widehat{Y} \cong A/X\,,
\]
the kernels satisfy
\[
\ker(Y \to \widehat{X}) = Y \cap X \quad \text{and} \quad \ker(X \to \widehat{Y}) = X \cap Y\,,
\]
and there exists a \emph{non-degenerate} alternating form
\[
e\colon (Y \cap X) \times (X \cap Y) \longrightarrow \mathbb{G}_m\,,
\]
which realizes the canonical duality between these kernels.
This implies that $X\cap Y$ admits a Lagrangian decomposition as a finite abelian group,
and hence the cardinality of \(X \cap Y\) is a \emph{perfect square} (\cf\cite[Lemma 5.2]{Da10}).

\begin{remark}
    In general, we cannot assume that an arbitrary symplectic biextension on a particular abelian variety is of the form $P\otimes\sigma^*P^{-1}$, see \S \ref{subsec:vanishing-Brauer} for further discussions.
    However, according to the proof of \cite[Theorem 4.2]{Pol96} and the remark after it,
    Polishchuk points out that once one can define the Brauer class and the twisted categories,
    these assumptions can be removed except in the case where the characteristic of the base field is $2$.
\end{remark}

\subsection{Symplectic Isomorphisms and Derived Equivalences}
\label{sec:sp-var-and-biextension}

\begin{definition}
\label{def:polishchuk-orlov}
For an abelian variety \(X\) and \(\alpha \in \Br(X)[n]\) with $n$ invertible in $k$, we define the quotient abelian variety
\[
A_{(X,\alpha)} \coloneq \bigl( X \times \widehat{X} \bigr) / K
\]
where \(K \subset X[n] \times \widehat{X}[n]\) is the graph of \(\phi_\alpha\colon X[n] \to \widehat{X}[n]\) defined in \eqref{eq:twist-hom},
and the quotient map is denoted by $\pi_X\colon X\times\widehat{X}\to A_{(X,\alpha)}$.
This variety fits into a short exact sequence:
\begin{equation}
\label{eq:exact-seq-A}
0 \longrightarrow \widehat{X} \xlongrightarrow{i} A_{(X,\alpha)} \xlongrightarrow{q_X} X \longrightarrow 0\,.
\end{equation}
\end{definition}
By \cite[Theorem 1.2]{Pol96}, \(A_{(X,\alpha)}\) is independent of the choice of \(\phi_\alpha\) up to isomorphism.

Let $\scP$ be the Poincar\'e line bundle on $X\times \widehat{X}$.
The abelian variety \(A_{(X,\alpha)}\) admits a symplectic biextension descending from \(L_X^{\otimes n}\),
where \[L_X = p^*_{23}\scP \otimes p^*_{14}\scP^{-1}\] is the symplectic biextension of \((X \times \widehat{X})^2\); see \cite[Theorem 1.2]{Pol96} and the argument before the theorem.
We will denote the induced isomorphism by
\begin{equation}
\label{eq:symplectic-isomorphism}
\psi_{\alpha}\colon A_{(X,\alpha)}\longrightarrow\widehat{A}_{(X,\alpha)}\,,
\end{equation}
which is the unique morphism satisfying $\widehat{\pi_X}\circ\psi_\alpha\circ\pi_X=n\psi_{L_X}$.
Via the inclusion \(i\), we identify \(\widehat{X}\) as a Lagrangian subvariety of \(A_{(X,\alpha)}\), with the associated canonical Brauer class
\[
e_{\widehat{X}} = \alpha \in \mathrm{H}^2(X,\mathbb{G}_m)\,.
\]
The following foundational result is essentially established in \cite{Pol96}.

\begin{theorem}
\label{Thm:Polishchuk}
For twisted abelian varieties \((X,\alpha)\) and \((Y,\beta)\), suppose that there exists a symplectic isomorphism
\[
\psi \colon A_{(X,\alpha)} \xlongrightarrow{\sim} A_{(Y,\beta)}\,.
\]
Then
\begin{enumerate}
\item If $\chara(k) \ne 2$ and \(\psi(\widehat{X})\cap\widehat{Y}\) is finite  with order invertible in \(k\), then there is an equivalence of triangulated categories
\[
\bdc(X, \alpha) \simeq \bdc(Y, \beta)\,.
\]
\item \(X\) and \(Y\) are principally isogenous.
\end{enumerate}
\end{theorem}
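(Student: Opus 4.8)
The plan is to unpack the two assertions separately, both relying on the structure theory of Lagrangian subvarieties recalled in \S\ref{subsec:polishchuk}. For part (1), the strategy follows Polishchuk's original argument, with the Brauer-theoretic bookkeeping that allows the characteristic-$2$ hypothesis to be dropped. First I would observe that $\widehat{X}$ (via the inclusion $i$ of \eqref{eq:exact-seq-A}) is a Lagrangian subvariety of $A_{(X,\alpha)}$ with canonical Brauer class $e_{\widehat{X}} = \alpha$, and similarly $\widehat{Y}$ is Lagrangian in $A_{(Y,\beta)}$ with $e_{\widehat{Y}} = \beta$. Transporting $\widehat{Y}$ through $\psi^{-1}$ gives two Lagrangian subvarieties $\widehat{X}$ and $\psi^{-1}(\widehat{Y})$ inside the single symplectic abelian variety $A_{(X,\alpha)}$, whose intersection is finite of order invertible in $k$ by hypothesis. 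Polishchuk's construction (the central extension $1 \to \mathbb{G}_m \to G \to X \cap Y \to 0$ parametrizing isomorphisms of symmetric structures on the restricted biextension) then produces, from an isotropic line bundle on each Lagrangian, a Fourier--Mukai-type kernel realizing an equivalence $\bdc(X,\alpha) \simeq \bdc(Y,\beta)$; the twisted categories are exactly the ones into which the $\mathbb{G}_m$-gerbe of $G$ forces the sheaves to live. Here I would cite \cite[Theorem 4.2]{Pol96} together with the remark following it, which explains that once the Brauer classes and twisted categories are available, the auxiliary hypotheses present in the original statement (beyond $\chara(k) \neq 2$ and the invertibility of the relevant orders) are unnecessary.

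For part (2), the point is that principal isogeny is a purely numerical consequence of the existence of the symplectic isomorphism $\psi$, with no characteristic or invertibility assumption. I would argue as follows. Both $A_{(X,\alpha)}$ and $A_{(Y,\beta)}$ sit in exact sequences $0 \to \widehat{X} \to A_{(X,\alpha)} \xrightarrow{q_X} X \to 0$ and $0 \to \widehat{Y} \to A_{(Y,\beta)} \xrightarrow{q_Y} Y \to 0$, exhibiting $\widehat{X}$ and $\widehat{Y}$ as Lagrangian subvarieties. Under $\psi$, the subvariety $\widehat{X} \subset A_{(X,\alpha)}$ maps isomorphically onto the Lagrangian $\psi(\widehat{X}) \subset A_{(Y,\beta)}$. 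Now I would use the general fact from \S\ref{subsec:polishchuk}: for two Lagrangian subvarieties $Z_1, Z_2$ of a symplectic abelian variety with $Z_1 \cap Z_2$ finite, the duality $\widehat{Z_1} \cong A/Z_2$ together with the non-degenerate alternating form on $(Z_1 \cap Z_2) \times (Z_2 \cap Z_1)$ forces $Z_1 \cap Z_2$ to carry a Lagrangian decomposition as a finite abelian group, whence $\#(Z_1 \cap Z_2)$ is a perfect square (\cf \cite[Lemma 5.2]{Da10}). Applying this with $Z_1 = \psi(\widehat{X})$ and $Z_2 = \widehat{Y}$ inside $A_{(Y,\beta)}$: the composite $\psi(\widehat{X}) \hookrightarrow A_{(Y,\beta)} \xrightarrow{q_Y} Y$ is an isogeny (both sides have dimension $g$ and the kernel is $\psi(\widehat{X}) \cap \widehat{Y}$, which is finite since both are Lagrangian), and its degree equals $\#(\psi(\widehat{X}) \cap \widehat{Y})$, a perfect square. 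Composing $X \xleftarrow{\sim} \widehat{\widehat{X}}$ — more precisely, dualizing $q_X|_{\widehat X}$... — the cleanest route is: the isogeny $\widehat{X} \xrightarrow{\psi} \psi(\widehat X) \xrightarrow{q_Y} Y$ together with an analogous isogeny $\widehat{X} \to X$ coming from the symplectic form on $A_{(X,\alpha)}$ (namely $\widehat X \hookrightarrow A_{(X,\alpha)} \xrightarrow{\psi_\alpha} \widehat A_{(X,\alpha)} \twoheadrightarrow \widehat{\widehat X} = X$, whose degree is also a perfect square by the same lemma applied to $\widehat X$ against any transverse Lagrangian) compose — after passing to the dual isogeny of one of them — to give an isogeny $X \to Y$. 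I would then verify the degree is a perfect square: the degree of a composite is the product of degrees, the degree of a dual isogeny equals the degree of the original, and a product of perfect squares is a perfect square.

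The step I expect to be the main obstacle is making the degree computation in part (2) genuinely match the definition of principal isogeny — i.e., assembling the various Lagrangian-intersection isogenies into a single isogeny $X \to Y$ and checking its degree is a perfect square, rather than merely that several auxiliary degrees are. The subtlety is that $\psi(\widehat X) \cap \widehat Y$ could a priori be large (it is only the hypothesis of part (1), not part (2), that controls its order's invertibility), so the argument must use only finiteness; and one must be careful that composing an isogeny with the dual of another does not introduce a non-square factor. The resolution is to note that, since $\widehat X$ and $\widehat Y$ are Lagrangian and $q_X, q_Y$ realize $A_{(X,\alpha)}/\widehat X \cong X$ and $A_{(Y,\beta)}/\widehat Y \cong Y$, we get a zig-zag $X \cong A_{(X,\alpha)}/\widehat X \xrightarrow{\bar\psi} A_{(Y,\beta)}/\psi(\widehat X)$ and then relate $A_{(Y,\beta)}/\psi(\widehat X)$ to $Y = A_{(Y,\beta)}/\widehat Y$ via the two Lagrangians $\psi(\widehat X), \widehat Y$; the comparison isogeny has kernel/cokernel governed by $\psi(\widehat X) \cap \widehat Y$, and the perfect-square property propagates because at each stage the relevant finite group carries the non-degenerate alternating form guaranteeing a Lagrangian splitting. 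For part (1), the only delicate point is citing Polishchuk's construction at the right level of generality; since the excerpt explicitly flags (in the Remark after the Lagrangian discussion) that the Brauer-class formalism removes all of Polishchuk's auxiliary hypotheses except $\chara(k) = 2$, this is a matter of invoking \cite[Theorem 4.2]{Pol96} and its surrounding remarks rather than reproving it.
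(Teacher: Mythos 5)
Your treatment of part (1) matches the paper's: it is essentially a citation of Polishchuk together with the Brauer-theoretic bookkeeping, split by characteristic, and the paper adds only the pointer to Elagin for the requisite equivariant-category ingredients.

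Your treatment of part (2), however, has two genuine gaps. First, you assert that $\psi(\widehat X)\cap\widehat Y$ is ``finite since both are Lagrangian.'' This is false: two Lagrangian subvarieties of the same symplectic abelian variety can have positive-dimensional (even full) intersection --- for instance they can coincide. Finiteness is precisely the extra \emph{hypothesis} in part (1) and is \emph{not} available in part (2). The paper addresses this head-on by splitting part (2) into two cases: when $\psi(\widehat X)\cap\widehat Y$ is finite, argue directly; when it is not, invoke Polishchuk's Lemma 4.1 to produce an auxiliary Lagrangian $\widehat Z\subset A_{(Y,\beta)}$ that meets both $\widehat Y$ and $\psi(\widehat X)$ finitely, then compose the two resulting principal isogenies $X\to Z$ and $Z\to Y$. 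Your proposal never acknowledges the non-finite case at all and would not compile into a complete proof.

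Second, your proposed ``analogous isogeny'' $\widehat X\to X$ --- the composite $\widehat X\hookrightarrow A_{(X,\alpha)}\xrightarrow{\psi_\alpha}\widehat A_{(X,\alpha)}\twoheadrightarrow\widehat{\widehat X}=X$ (where the last map is the dual of the inclusion $i\colon\widehat X\hookrightarrow A_{(X,\alpha)}$) --- is in fact the \emph{zero map}, not an isogeny. Unwinding definitions, this composite sends $\hat x\mapsto L|_{\{\hat x\}\times\widehat X}$, and the isotropy of $\widehat X$ says exactly that $L|_{\widehat X\times\widehat X}$ is trivial; equivalently, the Lagrangian condition places $\psi_\alpha(\widehat X)$ inside the kernel of $\widehat A_{(X,\alpha)}\twoheadrightarrow X$. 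The paper's route avoids this entirely: from $\ker\bigl(q_Y\circ\psi\circ i\bigr)\cong\psi(\widehat X)\cap\widehat Y$ one already has a principal isogeny $\widehat X\to Y$, and one then precomposes with \emph{any polarization} $X\to\widehat X$. A polarization $\phi_L$ has $\deg\phi_L=\chi(L)^2$, a perfect square, so the composite $X\to\widehat X\to Y$ is principal. This is the missing ingredient that your plan gestures at (the ``main obstacle'' paragraph) but does not actually supply.
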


\begin{proof}
For the first assertion, we split $\chara(k) \ne 2$ into \(\chara(k)=0\) or \(\chara(k) > 2\), under either condition, \cite[Theorem 4.2]{Pol96} is valid, relying on the results from \cite[Theorem 1.1, Theorem 1.2]{Ela15}. Indeed, there is an intertwining operator that induces a derived equivalence
\[
\bdc(X,\alpha) \simeq \bdc(Y,\beta)\,,
\]
whenever $\psi(\widehat{X})\cap \widehat{Y}$ is finite with its order invertible in $k$.

For the second assertion, it is implicitly stated in \cite[Remark on \pno~10]{Pol96} and we provide the details here for the completeness of the proof.
Given a symplectic isomorphism $\psi\colon A_{(X,\alpha)} \xrightarrow{\sim} A_{(Y,\beta)}$,
 the image $\psi(\widehat{X})$ of $\widehat{X}$ is Lagrangian in $A_{(Y,\beta)}$ as $\widehat{X}$ is Lagrangian in $A_{(X,\alpha)}$ via the identification \eqref{eq:exact-seq-A}.
 Then it divides into two cases according to the cardinality of $\psi(\widehat{X}) \cap \widehat{Y}$.

\begin{enumerate}[align=left,leftmargin=0pt,labelindent=0pt,listparindent=\parindent,labelwidth=0pt,topsep=0.5em,itemsep=0.5em,itemindent=!,label={\underline{\bfseries Case \arabic*:}}]

\item
\textbf{$\psi(\widehat{X}) \cap \widehat{Y}$ is finite.} The composition
\[
\phi\colon \widehat{X} \xhookrightarrow{\;\;i\;\;} A_{(X,\alpha)} \xlongrightarrow{\psi} A_{(Y,\beta)} \xlongrightarrow{q_Y} Y
\]
is a principal isogeny because
\[
\ker(\phi) \cong \psi(\widehat{X}) \cap \widehat{Y}\,,
\]
whose cardinality is finite and thus is a perfect square as explained in \S \ref{subsec:polishchuk}.
Composing with a polarization $X \to \widehat{X}$ yields a principal isogeny $X \to Y$.

\item
\textbf{$\psi(\widehat{X}) \cap \widehat{Y}$ is not finite.} In this case, by \cite[Lemma 4.1]{Pol96}, there exists a Lagrangian subvariety $\widehat{Z} \subset A_{(Y,\beta)}$ such that both $\widehat{Z} \cap \widehat{Y}$ and $\widehat{Z} \cap \psi(\widehat{X})$ are finite.
Applying the arguments in {\bfseries Case 1}, we obtain two principal isogenies $X \to Z$ and $Z \to Y$, and their composition $X \to Y$ is also a principal isogeny.
\end{enumerate}
\end{proof}

\begin{remark}
If $\ord(\alpha)$ and $\ord(\beta)$ are both invertible in $k$, the result can be further strengthened as follows:
Even if $\psi(\widehat{X})\cap \widehat{Y}$ is not finite, all the statements remain valid, provided there exists a Lagrangian subvariety \(\widehat{Z} \subseteq A_{(Y,\beta)}\) such that both \(\widehat{Z} \cap \widehat{Y}\) and \(\widehat{Z} \cap \psi(\widehat{X})\) are finite and of order invertible in $k$, which holds if $Y$ admits a prime-to-$p$ polarization.
\end{remark}

\subsection{Enhanced Symplectic Abelian Varieties}
\label{subsec:vanishing-Brauer}

In the context of the twisted derived Torelli theorem, the twisted symplectic abelian variety $(A_{(X,\alpha)}, q_X^\ast \alpha)$ plays a fundamental role.
As we shall see, the order of the Brauer class $q_X^\ast \alpha$ is governed by the enhancement data on $A_{(X,\alpha)}$.

Recall from \cite{Pol12} that a symplectic abelian variety $(A,L)$ is called \emph{enhanced} if $L \cong P \otimes \sigma^* P^{-1}$ for some $P \in \Pic(A^2)$ or equivalently, $\psi_L=f-\widehat{f}$ for some $f\colon A\to\widehat{A}$ (see \cite[Definition 2.1.2]{Pol12}). Then we have

\begin{proposition}
Let $(X,\alpha)$ be a twisted abelian variety with $\ord(\alpha)$ invertible in $k$.
\begin{enumerate}
    \item The class $q^*_X\alpha$ is $2$-torsion, i.e., $q^*_X\alpha \in \Br\bigl(A_{(X,\alpha)}\bigr)[2]$.

    \item  $q_X^*\alpha \in \Br\bigl(A_{(X,\alpha)}\bigr)$ is trivial if and only if $A_{(X,\alpha)}$ is enhanced.

    \item For any twisted abelian variety $(Y,\beta)$ with $\ord(\beta)$ invertible in $k$, any symplectic isomorphism
    \[
    \psi\colon A_{(X,\alpha)} \longrightarrow A_{(Y,\beta)}
    \]
    satisfies $\psi^*q_Y^*\beta = q^*_X\alpha$.
\end{enumerate}
\end{proposition}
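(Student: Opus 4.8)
The plan is to show that all three assertions are facets of a single fact: writing $A=A_{(X,\alpha)}$ and letting $\psi_\alpha\colon A\xrightarrow{\sim}\widehat A$ be the skew-symmetric isomorphism attached to the symplectic biextension descending from $L_X^{\otimes n}$, the class $q_X^{\ast}\alpha\in\Br(A)$ is exactly the obstruction to $\psi_\alpha$ having the shape $f-\widehat f$, i.e.\ to $A$ being enhanced. First I would record two explicit descriptions on the étale cover $\pi_X$. Directly from Definition~\ref{def:polishchuk-orlov}, $q_X\circ\pi_X=[n]_X\circ\pr_1\colon X\times\widehat X\to X$ --- both sides kill $K$ and have kernel $X[n]\times\widehat X$, which is $i(\widehat X)$ modulo $K$ --- while the defining identity $\widehat{\pi_X}\circ\psi_\alpha\circ\pi_X=n\,\psi_{L_X}$ (see \eqref{eq:symplectic-isomorphism}) pins down $\psi_\alpha$ on $\pi_X$.

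Next I would pin down the shape of $q_X^{\ast}\alpha$. By \S\ref{subsec:twisted_abelian}, a torsion Brauer class on an abelian variety with order invertible in $k$ is an alternating form on torsion points taken modulo the commutator forms of line bundles. Under $q_X$, the class $\alpha$ (represented by $e_\alpha$) pulls back to the alternating form $(a_1,a_2)\mapsto e_\alpha(q_X a_1,q_X a_2)$ on $A[n]$, which by adjunction for $q_X$ and the Weil pairing on $A$ is the form attached to the skew-symmetric homomorphism $\widehat{q_X}\circ\phi_\alpha\circ q_X\colon A[n]\to\widehat A[n]$; thus $q_X^{\ast}\alpha$ is the Brauer class of this skew homomorphism. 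On the other hand, the homomorphism
\[
\Hom(A,\widehat A)\ \xrightarrow{\ f\,\mapsto\,f-\widehat f\ }\ \{\,g\in\Hom(A,\widehat A)\ :\ \widehat g=-g\,\}
\]
has kernel $\NS(A)$ and cokernel $C_A$ killed by $2$, because $g-\widehat g=2g$ for skew $g$. The assignment ``skew homomorphism $\leadsto$ its Brauer class (restriction to torsion, modulo commutator forms)'' induces a natural map $\theta\colon C_A\to\Br(A)$, and the core of the argument would be to show that $\theta$ is injective and that $\theta([\psi_\alpha])=q_X^{\ast}\alpha$.

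Granting this, the three assertions follow. Since $q_X^{\ast}\alpha\in\theta(C_A)$ and $C_A$ is $2$-torsion, $q_X^{\ast}\alpha$ is $2$-torsion, which is (1). Moreover $q_X^{\ast}\alpha=0\Leftrightarrow[\psi_\alpha]=0\Leftrightarrow\psi_\alpha=f-\widehat f$ for some $f\colon A\to\widehat A\Leftrightarrow A$ is enhanced (by the equivalent form of the definition recalled above, \cite[Definition 2.1.2]{Pol12}), which is (2). Finally, a symplectic isomorphism $\psi\colon A_{(X,\alpha)}\to A_{(Y,\beta)}$ intertwines the symplectic structures (Definition~\ref{def:biextension}): $\widehat\psi\circ\psi_\beta\circ\psi=\psi_\alpha$; hence it carries $[\psi_\beta]$ to $[\psi_\alpha]$, and naturality of $\theta$ gives $\psi^{\ast}q_Y^{\ast}\beta=q_X^{\ast}\alpha$, which is (3).

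The main obstacle is the claim in the second paragraph. For $\theta([\psi_\alpha])=q_X^{\ast}\alpha$, the plan is to pull $\widehat{q_X}\circ\phi_\alpha\circ q_X$ and $\psi_\alpha$ back along $\pi_X$ and compare: by the two descriptions above they become, up to the commutator form of $\pr_1^{\ast}\Theta$ for a polarization $\Theta$ of $X$ and up to the factor of $n$ separating $\psi_\alpha$ from $\psi_{L_X}$, the same skew homomorphism of $X\times\widehat X$; the work is to carry out this bookkeeping while controlling the commutator-form ambiguity in the choice of $\phi_\alpha$. Injectivity of $\theta$ is a statement purely about abelian varieties: it can be extracted from the Kummer sequence together with $\rH^2(A,\mu_m)\cong\wedge^2\rH^1(A,\mu_m)$, which places the relevant subquotient of $\Hom(A,\widehat A)\otimes\QQ/\ZZ$ inside $\Br(A)$. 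I would also note that the naïve route to (1) --- pulling $q_X^{\ast}\alpha$ back along $\pi_X$, where it vanishes --- only yields that it is $n$-torsion, which is automatic; obtaining the sharp bound $2$ genuinely requires recognising $q_X^{\ast}\alpha$ as the enhancement obstruction.
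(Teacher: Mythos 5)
Your framing is a genuine and appealing reformulation: one can package all three assertions as the statement that $q_X^{\ast}\alpha$ is the value of a (putative) injective homomorphism $\theta\colon C_A\to\Br(A)$ at the class $[\psi_\alpha]$, where $C_A=\mathrm{coker}\bigl(\Hom(A,\widehat A)\xrightarrow{f\mapsto f-\widehat f}\Hom^{\text{skew}}(A,\widehat A)\bigr)$. This is indeed the spirit of the paper's argument, which is organized around the single identity $n\psi_\alpha=\varphi_\alpha-\widehat{\varphi_\alpha}$ where $\varphi_\alpha=\psi_\alpha\circ s\circ q_X$; once that identity is in hand, parts (1)--(3) fall out by the bookkeeping you describe. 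However, two things in your proposal remain genuine gaps.

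First, your $\theta$ is not well-defined as stated. A skew homomorphism $g\colon A\to\widehat A$ does not canonically produce a Brauer class by ``restriction to torsion'': $g$ is typically of infinite order in $\Hom(A,\widehat A)$, so there is no preferred level $m$, and $g|_{A[m]}$ for various $m$ gives unrelated answers. The way to make $\theta$ well-defined is precisely the device the paper uses: choose $\varphi$ with $ng=\varphi-\widehat\varphi$ and declare $\theta([g])$ to be the class of $\varphi|_{A[n]}$ in $\Br(A)[n]$; one then checks independence of $\varphi$ (they differ by a symmetric hom, hence a commutator form), independence of $n$, vanishing on enhanced $g$, and injectivity. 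All of this works, but it is not ``restriction of $g$ to torsion'' and the auxiliary $\varphi$ is unavoidable --- it is exactly $\varphi_\alpha=\psi_\alpha\circ s\circ q_X$ in the paper.

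Second, and more seriously, your plan for the crux $\theta([\psi_\alpha])=q_X^{\ast}\alpha$ --- ``pull $\widehat{q_X}\circ\phi_\alpha\circ q_X$ and $\psi_\alpha$ back along $\pi_X$ and compare'' --- collapses for exactly the reason you yourself flag at the end for part (1). Since $q_X\circ\pi_X=[n]_X\circ\pr_1$, we have $\pi_X^{\ast}q_X^{\ast}\alpha=\pr_1^{\ast}([n]_X^{\ast}\alpha)=n^2\pr_1^{\ast}\alpha=0$ in $\Br(X\times\widehat X)$, and at the level of forms the pullback $\widehat{\pi_X}\circ(\widehat{q_X}\circ\phi_\alpha\circ q_X)\circ\pi_X$ restricted to $(X\times\widehat X)[n]$ kills everything through the factor $[n]\circ\pr_1$; likewise $\widehat{\pi_X}\circ\psi_\alpha\circ\pi_X=n\psi_{L_X}$ vanishes on $(X\times\widehat X)[n]$. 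So comparison after pulling back to $X\times\widehat X$ and restricting to $n$-torsion yields $0=0$ and carries no information; there is nothing to ``bookkeep.'' The paper sidesteps this by never pulling back at the level of torsion: it proves the identity $n\psi_\alpha=\varphi_\alpha-\widehat{\varphi_\alpha}$ directly as an equality of homomorphisms $A\to\widehat A$ (via the commutative diagram around \eqref{eq:Brauer-enhanced}, which rests on the identities $q_X\circ\pi_X=[n]_X\circ\pr_1$, $\widehat{q_X}=\psi_\alpha\circ i$, and $\widehat{\pi_X}\circ\psi_\alpha\circ\pi_X=n\psi_{L_X}$), and only then restricts to $A[n]$. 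That computation is the actual content here, and your proposal does not supply it nor a viable route to it.

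Once that identity is established, your derivation of (1), (2), (3) is correct: $C_A$ is killed by $2$; injectivity of $\theta$ gives the enhancement criterion; and for (3), a symplectic isomorphism $\psi$ satisfies $\widehat\psi\circ\psi_\beta\circ\psi=\psi_\alpha$ by Definition~\ref{def:biextension}, so the naturality of $\theta$ transports $[\psi_\beta]$ to $[\psi_\alpha]$. But the missing middle step --- making $\theta$ precise and proving $\theta([\psi_\alpha])=q_X^{\ast}\alpha$ --- is exactly where the paper's explicit work with $\varphi_\alpha$ and equation \eqref{eq:Brauer-enhanced} lives, and it cannot be replaced by a pullback along $\pi_X$.
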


\begin{proof}
Recall the following exact sequence:
\[
0 \longrightarrow \mathrm{NS}(A_{(X,\alpha)})/n\mathrm{NS}(A_{(X,\alpha)}) \xlongrightarrow{\iota} \mathrm{Hom}(\wedge^2 A_{(X,\alpha)}[n], \mu_n) \xlongrightarrow{\delta} \Br\bigl(A_{(X,\alpha)}\bigr)[n] \longrightarrow 0.
\]
In particular, (\cf\cite[Proposition 3]{Ber72}),
\[
\ker \delta=\image \iota=\Bigl\{e^{L^{\otimes n}}\colon A_{(X,\alpha)}[n]\times A_{(X,\alpha)}[n]\longrightarrow\mu_n\ \Big|\ L\in\Pic\bigl(A_{(X,\alpha)}\bigr)\Bigr\}\,.
\]
Using the Weil pairing, it corresponds to
\begin{align*}
&\Bigl\{\phi_L\big|_{A_{(X,\alpha)}[n]}\colon A_{(X,\alpha)}[n]\longrightarrow \widehat{A}_{(X,\alpha)}[n]\ \Big|\ L\in\Pic\bigl(A_{(X,\alpha}\bigr)\Bigr\}\\
=&\Bigl\{\phi\big|_{A_{(X,\alpha)}[n]}\colon A_{(X,\alpha)}[n]\longrightarrow \widehat{A}_{(X,\alpha)}[n]\ \Big|\ \text{symmetric}\ \phi\colon A_{(x,\alpha)}\longrightarrow\widehat{A}_{(X,\alpha)}\Bigr\}\,.
\end{align*}
And the alternating bilinear pairing
\[
q^*_Xe_\alpha\colon A_{(X,\alpha)}[n]\times A_{(X,\alpha)}[n]\longrightarrow\mu_n\,,
\]
defined by $q^*_Xe_\alpha(z,w)=e_\alpha\bigl(q_X(z),q_X(w)\bigr)$ with $z$, $w\in A_{(X,\alpha)}[n]$, is sent to $q^*_X\alpha$ via $\delta$.
Suppose that $q^*_Xe_\alpha$ corresponds to the homomorphism
\[
\widetilde{\phi}_\alpha\colon A_{(X,\alpha)}[n]\longrightarrow \widehat{A}_{(X,\alpha)}[n]\,,
\]
then for any $z$, $w\in A_{(X,\alpha)}[n]$,
\begin{align*}
q^*_Xe_\alpha(z,w)&=\langle q_X(z),\phi_\alpha\circ q_X(w)\rangle\\
&=\langle z,\widehat{q_X}\circ\phi_\alpha\circ q_X(w)\rangle\\
&=\langle z,\psi_\alpha\circ i\circ\phi_\alpha\circ q_X(w)\rangle\\
&=\langle z,\psi_\alpha\circ(-s)\circ q_X(w)\rangle\,,
\end{align*}
where $s$ is the restriction of the projection $X\times\widehat{X}\to A_{(X,\alpha)}$ to $X$.

Hence $-\widetilde{\phi}_\alpha$ is equal to the restriction of
\[
\varphi_{\alpha}\colon A_{(X,\alpha)}\xlongrightarrow{q_X} X \xlongrightarrow{s}  A_{(X,\alpha)} \xlongrightarrow{\psi_\alpha} \widehat{A}_{(X,\alpha)}\,,
\]
to the $n$-torsion points of $A_{(X,\alpha)}$, where $\psi_\alpha$ is the isomorphism induced by the biextension.

Now we are ready to prove the proposition.
\begin{enumerate}
\item Consider the following commutative diagram
\[\begin{tikzcd}[sep=huge]
	{X\times\widehat{X}} & A_{(X,\alpha)} & {X\times\widehat{X}} \\
	{\widehat{X}\times X} & {\widehat{A}_{(X,\alpha)}} & {\widehat{X}\times X} \mathrlap{\,,}
	\arrow["{(s,i)}", from=1-1, to=1-2]
	\arrow["{n^2\psi_{L_X}}"', from=1-1, to=2-1]
	\arrow["{(q_X,\widehat{s}\psi_\alpha)}", from=1-2, to=1-3]
	\arrow["{n\psi_{\alpha}}"', from=1-2, to=2-2]
	\arrow["{\psi_{L_X}}"', from=1-3, to=2-3]
	\arrow["{(\widehat{s},\widehat{i})}"', from=2-2, to=2-1]
	\arrow["{(\widehat{q_X},-\psi_{\alpha} s)}"', from=2-3, to=2-2]
\end{tikzcd}\]
the commutativity of the right square means that
\begin{equation}
\label{eq:Brauer-enhanced}
n\psi_\alpha=\varphi_\alpha-\widehat{\varphi_\alpha}\,,
\end{equation}
so $2\widetilde{\phi}_\alpha$ is equal to the restriction of $-\varphi_\alpha-\widehat{\varphi_\alpha}$, and this implies $(q^*_X\alpha)^2=1$.

\item $A_{(X,\alpha)}$ is enhanced means that $\psi_\alpha=f-\widehat{f}$ for some $f\colon A_{(X,\alpha)}\to \widehat{A}_{(X,\alpha)}$ by definition.
Then $nf-\varphi_\alpha$ is symmetric by \eqref{eq:Brauer-enhanced} and its restriction is equal to $\widetilde{\phi}_\alpha$.
\newline
Conversely, if $q^*_X\alpha=0$, then there is a symmetric $\varphi\colon A_{(X,\alpha)}\to\widehat{A}_{(X,\alpha)}$ whose restriction to $A_{(X,\alpha)}[n]$ is $\widetilde{\phi}_\alpha$.
Thus $\varphi_\alpha+\varphi=nf$ for some $f$ and then $\psi_\alpha=f-\widehat{f}$ by \eqref{eq:Brauer-enhanced}.

\item For any $z$, $w\in A_{(X,\alpha)}$,
\[
\psi^*q^*_Ye_\beta(z,w)=\langle\psi(z),-\varphi_\beta\circ\psi(w)\rangle=\langle z,-\widehat{\psi}\circ\varphi_\beta\circ\psi(w)\rangle\,,
\]
where $\varphi_\beta$ is defined similarly to $\varphi_\alpha$.
Hence the difference between $\psi^*q^*_Ye_\beta$ and $q^*_Xe_\alpha$ is the restriction of $\varphi_\alpha-(\widehat{\psi}\circ\varphi_\beta\circ\psi)$,
and note that
\[
\varphi_\alpha-(\widehat{\psi}\circ\varphi_\beta\circ\psi)-\widehat{\varphi_\alpha}+(\widehat{\psi}\circ\widehat{\varphi_\beta}\circ\psi)=n\psi_\alpha-(\widehat{\psi}\circ n\psi_\beta\circ\psi)=0\,,
\]
which implies $\psi^*q^*_Y\beta=q^*_X\alpha$.
\end{enumerate}
\end{proof}

\section{Twisted autoequivalences via equivariant categories}
\label{sec:twisted-functors}

Orlov showed that for an abelian variety \(X\), the group \(X\times \widehat{X}\) embeds naturally into the autoequivalence group of \(\mathrm{D}^{\mathrm{b}}(X)\). In this section, we extend this picture to the twisted setting. Working equivariantly and following Ploog's formalism, we construct for each \(z\in A_{(X,\alpha)}\) an autoequivalence \(\Phi_z\) of \(\mathrm{D}^{\mathrm{b}}(X,\alpha)\), and show that this gives an injective homomorphism from \(A_{(X,\alpha)}\) into the autoequivalence group. This will be used later in the construction of the twisted Orlov functors.

\subsection{Setup and Examples}

We begin by reviewing some foundational results of autoequivalences of twisted derived categories through equivariant categories.

Fix a twisted abelian variety $(X,\alpha)$ with $\alpha \in \Br(X)[n]$, where $n$ is invertible in $k$.
Choose a normalized $2$-cocycle $a \in \rZ^2(X[n], \mathbb{G}_m)$ representing $\alpha$. 
Let $G \coloneq X[n]$ be equipped with its natural action
\[\bvr \colon G \longrightarrow \Aut(X)\longrightarrow \Aut(\bdc(X))\,.\]
By Corollary \ref{cor:av-twisted-equivariant-equivalence}, we have an equivalence
\[
\bdc(X,\alpha) \simeq \bdc(X)_{G,\bvr, a}\,.
\]
We will mainly work on this equivariant category.
We start with some autoequivalences of $\bdc(X)_{G,\bvr, a}$.
To construct such autoequivalences, consider the finite \'etale covering
\[[n]_{X \times X} \colon X\times X\longrightarrow X\times X \,,\]
and define the $2$-cocycle $a^{-1} \boxtimes a \in \rZ^2(G \times G, \mathbb{G}_m)$ by
\begin{equation}
(a^{-1} \boxtimes a){(\tau_1,\tau_2)} \coloneq \frac{a_{\sigma'_1,\sigma'_2}}{a_{\sigma_1,\sigma_2}}, \quad \text{where } \tau_i = (\sigma_i,\sigma'_i) \in G \times G\,.
\end{equation}
Then any object $(E,\eta) \in \bdc(X \times X)_{G \times G, \bvr, a^{-1} \boxtimes a}$ gives an equivariant Fourier--Mukai functor
\[
\Phi_{(E,\eta)}\colon \bdc(X)_{G,\bvr, a}\longrightarrow \bdc(X)_{G,\bvr, a}\,.
\]
In particular, one can construct a $(G\times G,a^{-1}\boxtimes a)$-linearized object from any $G_{\Delta}$-linearized object via the \emph{(left) inflation}, where $G_\Delta$ is the diagonal subgroup of $G\times G$, see \cite[\S 3]{Plo05} for the complete procedures.

\begin{example}
\label{ex:identity-functor}
Consider the structure sheaf of the diagonal $\mathcal{O}_\Delta\in\bdc(X\times X)$ equipped with the canonical $G_{\Delta}$-linearization $t^*_{(g,g)}\mathcal{O}_\Delta=\mathcal{O}_\Delta$, $\forall g\in G$,
its (left) inflation is
\[
\bigoplus_{g\in G}t^*_{(g,0)}\mathcal{O}_\Delta\in\bdc(X\times X)
\]
together with isomorphisms
\begin{align*}
\zeta_{\sigma,\sigma'} \colon
& t^*_{(\sigma,\sigma')}\Bigl(\bigoplus_{g\in G}t^*_{(g,0)}\mathcal{O}_\Delta\Bigr)\xrightarrow{\bigoplus_{g\in G}a^{-1}_{\sigma,g}}\bigoplus_{g\in G}t^*_{(\sigma+g,\sigma')}\mathcal{O}_\Delta \\
& \quad \xrightarrow{\bigoplus_{g\in G} a_{\sigma+g-\sigma',\sigma'}}
\bigoplus_{g\in G}t^*_{(\sigma+g-\sigma',0)}t^*_{(\sigma',\sigma')}\mathcal{O}_\Delta= \bigoplus_{g\in G}t^*_{(\sigma+g-\sigma',0)}\mathcal{O}_\Delta\,.
\end{align*}
Since $a\in\rZ^2(G,\mathbb{G}_m)$, the following diagram is commutative
\begin{equation*}
\begin{tikzcd}[row sep=4em,column sep=8em]
	{t^*_{(\sigma_1,\sigma'_1)}t^*_{(\sigma_2,\sigma'_2)}\Bigl(\bigoplus\limits_{g\in G}t^*_{(g,0)}\mathcal{O}_\Delta\Bigr)} & {t^*_{(\sigma_1,\sigma'_1)}\Bigl(\bigoplus\limits_{g\in G}t^*_{(\sigma_2+g-\sigma'_2,0)}\mathcal{O}_\Delta\Bigr)} \\
	{t^*_{(\sigma_1+\sigma_2,\sigma'_1+\sigma'_2)}\Bigl(\bigoplus\limits_{g\in G}t^*_{(g,0)}\mathcal{O}_\Delta\Bigr)} & {\bigoplus\limits_{g\in G}t^*_{(\sigma_1+\sigma_2+g-\sigma'_2-\sigma'_1,0)}\mathcal{O}_\Delta}\mathrlap{\,,}
	\arrow["{t^*_{(\sigma_1,\sigma'_1)}\bigl(\zeta_{\sigma_2,\sigma'_2}\bigr)}", from=1-1, to=1-2]
	\arrow["{\bigoplus\limits_{g\in G}\frac{a_{\sigma'_1,\sigma'_2}}{a_{\sigma_1,\sigma_2}}}"', from=1-1, to=2-1]
	\arrow["{\zeta_{\sigma_1,\sigma'_1}}", from=1-2, to=2-2]
	\arrow["{\zeta_{\sigma_1+\sigma_2,\sigma'_1+\sigma'_2}}"', from=2-1, to=2-2]
\end{tikzcd}
\end{equation*}
where the arrows are given by
\begin{align*}
    t^*_{(\sigma_1,\sigma'_1)}\Bigl(\zeta_{\sigma_2,\sigma'_2}\Bigr)
    &=\bigoplus_{g\in G}\frac{a_{\sigma_2+g-\sigma'_2,g}}{a_{\sigma_2,g}}\,,\\
    \zeta_{\sigma_1+\sigma_2,\sigma'_1+\sigma'_2}
    &=\bigoplus_{g\in G}\frac{a_{\sigma_1+\sigma_2+g-\sigma'_2-\sigma'_1,\sigma'_1+\sigma'_2}}{a_{\sigma_1+\sigma_2,g}}\,,\\
    \zeta_{\sigma_1,\sigma'_1}
    &=\bigoplus_{g\in G}\frac{a_{\sigma_1+\sigma_2+g-\sigma'_2-\sigma'_1,\sigma'_1}}{a_{\sigma_1,\sigma_2+g-\sigma'_2}}\,.\\
\end{align*}
Hence $\bigl(\bigoplus_{g\in G}t^*_{(g,0)}\mathcal{O}_\Delta,\zeta\bigr)\in\bdc(X\times X)_{G\times G, \bvr, a^{-1}\boxtimes a}$,
and the equivariant Fourier--Mukai functor it defines is isomorphic to the identity; see \cite[Example 3.14]{Plo05}.
\end{example}

Besides the left inflation,
one can also construct the right inflation from a $G_\Delta$-linearized object.
The construction is given by simply choosing $\{(0,g)\}_{g\in G}$ as the set of representatives of $(G\times G)/G_\Delta$, instead of $\{(g,0)\}_{g\in G}$ used above for the left inflation.
For example, the right inflation of $\mathcal{O}_\Delta$ is
\[
\bigoplus_{g\in G}t^*_{(0,g)}\mathcal{O}_\Delta\in\bdc(X\times X)
\]
together with isomorphisms
\begin{align*}
\zeta'_{\sigma,\sigma'} \colon
& t^*_{(\sigma,\sigma')}\Bigl(\bigoplus_{g\in G}t^*_{(0,g)}\mathcal{O}_\Delta\Bigr)\xrightarrow{\bigoplus_{g\in G}a_{\sigma',g}}\bigoplus_{g\in G}t^*_{(\sigma,\sigma'+g)}\mathcal{O}_\Delta \\
& \quad \xrightarrow{\bigoplus_{g\in G} a^{-1}_{\sigma'+g-\sigma,\sigma}}
\bigoplus_{g\in G}t^*_{(0,\sigma'+g-\sigma)}t^*_{(\sigma,\sigma)}\mathcal{O}_\Delta= \bigoplus_{g\in G}t^*_{(0,\sigma'+g-\sigma)}\mathcal{O}_\Delta\,.
\end{align*}

Since $a\in\rZ^2(G,\mathbb{G}_m)$,
one can check that
\[
\bigoplus_{g\in G}a^{-1}_{-g,g}\colon\bigoplus_{g\in G}t^*_{(g,0)}\mathcal{O}_\Delta\xrightarrow{\oplus_{g\in G}a^{-1}_{-g,g}}\bigoplus_{g\in G}t^*_{(0,-g)}t^*_{(g,g)}\mathcal{O}_\Delta=\bigoplus_{g\in G}t^*_{(0,-g)}\mathcal{O}_\Delta
\]
gives an isomorphism between the left inflation and the right inflation,
where the latter equality is the canonical $G_\Delta$-linearization.
Generally, the left inflation and the right inflation of any $G_\Delta$-linearized object are isomorphic via similar arguments.

\subsection{Generalized Functors}

Following the ideas in Example \ref{ex:identity-functor}, we define two fundamental families of functors that generalize the identity:

\begin{enumerate}[label={\bfseries(\arabic*)}]
    \item \textbf{Translation functors}: For any $y \in X$, the compatibility ${t_y}_*t_\tau^* \cong t_\tau^*{t_y}_*$ (canonical for all $\tau \in G$) induces an autoequivalence:
    \[
    {t_y}_* \colon \bdc(X)_{G,\bvr, a} \longrightarrow \bdc(X)_{G,\bvr, a}\,.
    \]

    \item \textbf{Tensor functors}: For any $\xi \in \widehat{X}$, under $\Coh(X) \simeq  \Coh(X)_G$, the line bundle $\mathscr{P}_\xi$ corresponds to $(\mathscr{P}_\nu, \chi_\xi) \in  \Coh(X)_G$ where:
    \begin{itemize}
        \item $\mathscr{P}_\nu \cong [n]_X^*\mathscr{P}_\xi$\,,
        \item $\chi_\xi(g)\colon t_g^*\mathscr{P}_\nu \to \mathscr{P}_\nu$ are descent isomorphisms satisfying $\chi_\xi(g) \circ t_g^*(\chi_\xi(h)) = \chi_\xi(gh)$\,.
    \end{itemize}
    This yields the tensor functor:
    \[
    (-) \otimes (\mathscr{P}_\nu, \chi_\xi) \colon \bdc(X)_{G,\bvr, a} \longrightarrow \bdc(X)_{G,\bvr, a}\,.
    \]
\end{enumerate}

Note that we can also define the translation functors for the classical (i.e., untwisted) equivariant category $\bdc(X)_{G,\bvr}$ by the compatibility ${t_y}_*t_\tau^* \cong t_\tau^*{t_y}_*$, so
\[
    {t_y}_* \colon \bdc(X)_{G,\bvr} \longrightarrow \bdc(X)_{G,\bvr}\,.
\]
Be aware that we abuse notation by using the same symbol for translation functors of different categories when there is no confusion.
In particular, ${t_y}_*(\mathscr{P_\nu},\chi_\xi)\cong(\mathscr{P}_{\nu},\chi_\xi)\in\bdc(X)_{G,\bvr}$ and thus we have the following isomorphisms between the generalized functors:
\begin{align*}
{t_y}_*(-)\otimes(\mathscr{P}_\nu,\chi_\xi)\cong {t_y}_*\bigl((-)\otimes(\mathscr{P}_\nu,\chi_\xi)\bigr)\colon\bdc(X)_{G,\bvr,a}\longrightarrow\bdc(X)_{G,\bvr,a}\,,
\end{align*}
which is similar to the commutativity of translation and tensor product with a line bundle in $\Pic^0$ for abelian varieties in classical cases.

\begin{remark}
    One should be aware that the above isomorphism of functors is not canonical; see \cite[\S 15.2]{Pol03} for the discussion.
\end{remark}

Now we present the equivariant Fourier--Mukai kernels of generalized functors.
For any $(y,\xi) \in X \times \widehat{X}$, denote the graph morphism of the translation by $y$ as
\begin{equation*}
\begin{aligned}
    \Gamma_y\colon X &\longrightarrow X \times X\\
    z &\longmapsto (z,z+y)\,.
\end{aligned}
\end{equation*}
The object ${\Gamma_y}_\ast\mathscr{P}_\nu$ carries a $G_\Delta$-linearization:

\begin{equation*}
    \lambda_g \colon t^*_{(g,g)}\bigl({\Gamma_y}_*\mathscr{P}_\nu\bigr) \xlongrightarrow{\sim} {\Gamma_y}_*\bigl(t_g^*\mathscr{P}_\nu\bigr) \xlongrightarrow{{\Gamma_y}_*(\chi_\xi(g))} {\Gamma_y}_*\mathscr{P}_\nu\,,
\end{equation*}
where
\begin{itemize}
    \item The first isomorphism is the canonical base change;
    \item The linearizations $\lambda_g$ satisfy the trivial cocycle condition $\lambda_g \circ t^*_{(g,g)}(\lambda_h) = \lambda_{gh}$.
\end{itemize}
Then the linearized kernel corresponding to the (left) inflation is
\[
\Bigl(\bigoplus_{\tau \in G} t^*_{(\tau,0)}{\Gamma_y}_*\mathscr{P}_\nu, \zeta^{(y,\xi)}\Bigr) \in \bdc(X \times X)_{G \times G, \bvr, a^{-1} \boxtimes a}
\]
with isomorphisms
\begin{equation*}
\zeta^{(y,\xi)}_{\tau,\tau'} = \bigoplus_{g \in G} \left(\frac{a_{\tau+g-\tau',\tau'}}{a_{\tau,g}} \cdot t^*_{(\tau+g-\tau',0)}(\lambda_{\tau'})\right)\,.
\end{equation*}
These data define the equivariant Fourier--Mukai transform:
\begin{equation}
\label{eq:equivariant-y-xi}
\Phi_{(y,\xi)} \coloneq \Phi_{\left(\bigoplus t^*_{(\tau,0)}{\Gamma_y}_*\mathscr{P}_\nu, \zeta^{(y,\xi)}\right)} \cong {t_y}_*\bigl((-) \otimes (\mathscr{P}_\nu, \chi_\xi)\bigr)\cong{t_y}_*(-) \otimes (\mathscr{P}_\nu, \chi_\xi)\,,
\end{equation}
(\cf\cite[Lemma 3.16]{Plo05}).
And one can check that these functors satisfy
\begin{align}
\label{eq:commutativity-actions}
\Phi_{(y_1,\xi_1)}\circ\Phi_{(y_2,\xi_2)}\cong\Phi_{(y_1+y_2,\xi_1+\xi_2)}
\end{align}
for any two $(y_i,\xi_i)\in X\times\widehat{X}$, $i=1,2$.

\begin{lemma}
\label{lem:trivial-functors}
For $(y,\xi)\in X\times \widehat{X}$, when $y \in G$ and $\xi = \phi_\alpha(y)$, the equivariant Fourier--Mukai transform $\Phi_{(y,\xi)}$ is isomorphic to the identity functor.
\end{lemma}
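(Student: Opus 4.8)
The plan is to exploit the factorization recorded in \eqref{eq:equivariant-y-xi}, namely $\Phi_{(y,\xi)}\cong{t_y}_*\circ\bigl((-)\otimes(\mathscr{P}_\nu,\chi_\xi)\bigr)$ as autoequivalences of $\bdc(X)_{G,\bvr,a}$, and to show that under the hypotheses $y\in G=X[n]$ and $\xi=\phi_\alpha(y)$ the two factors ${t_y}_*$ and $(-)\otimes(\mathscr{P}_\nu,\chi_\xi)$ are mutually inverse.

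First I would identify the tensor factor with a character twist. Since $\xi\in\widehat{X}[n]$ and $[n]_X^*$ is multiplication by $n$ on $\Pic^0(X)$, the line bundle $\mathscr{P}_\nu\cong[n]_X^*\mathscr{P}_\xi$ is trivial, so $(\mathscr{P}_\nu,\chi_\xi)\cong(\mathcal{O}_X,\chi_\xi)$ with $\chi_\xi\colon G\to\mathbb{G}_m$ a genuine character, namely the descent datum classifying $\mathscr{P}_\xi$ along the $G$-torsor $[n]_X\colon X\to X$; under the Weil-pairing identification $\widehat{X}[n]\cong\Hom(X[n],\mu_n)$ it is the character attached to $\xi$. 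In particular $(-)\otimes(\mathscr{P}_\nu,\chi_\xi)$ sends $(E,\theta)$ to $\bigl(E,(\chi_\xi(g)\,\theta_g)_{g\in G}\bigr)$.

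Next I would analyze ${t_y}_*$ for $y\in G$. On the underlying category ${t_y}_*=t_{-y}^*=\bvr_{-y}$. The point is that $\bvr_{-y}$ admits a canonical lift $\widetilde{\bvr_{-y}}$ to an autoequivalence of $\bdc(X)_{G,\bvr,a}$, obtained by transporting linearizations through the cocycle $a$ (unambiguously, since $G$ is abelian), and this lift is isomorphic to the identity — the natural isomorphism on $(E,\theta)$ being $\theta_{-y}\colon\bvr_{-y}E\to E$, which one checks is a morphism in the equivariant category using the linearization and cocycle axioms together with \eqref{eq:pairing-cocycle}. The translation autoequivalence ${t_y}_*$ is \emph{not} this lift: it is built from the strictly commuting identification ${t_y}_*t_g^*\cong t_g^*{t_y}_*$ in place of the $a$-twisted one, so it differs from $\widetilde{\bvr_{-y}}$ by tensoring with the character $g\mapsto a_{-y,g}/a_{g,-y}$, which by \eqref{eq:pairing-cocycle} equals $e_\alpha(g,y)$. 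Consequently ${t_y}_*\cong\id\otimes(\mathcal{O}_X,\,g\mapsto e_\alpha(g,y))$, i.e. it sends $(E,\theta)$ to $\bigl(E,(e_\alpha(g,y)\,\theta_g)_g\bigr)$.

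Putting the two descriptions together, ${t_y}_*\circ\bigl((-)\otimes(\mathscr{P}_\nu,\chi_\xi)\bigr)$ sends $(E,\theta)$ to $\bigl(E,(e_\alpha(g,y)\chi_\xi(g)\,\theta_g)_g\bigr)$, so the lemma reduces to the single identity $\chi_\xi(g)\cdot e_\alpha(g,y)=1$ for all $g\in G$. This is exactly where the hypothesis $\xi=\phi_\alpha(y)$ enters: by the defining relation $e_\alpha(\sigma_1,\sigma_2)=\langle\sigma_1,\phi_\alpha(\sigma_2)\rangle$ of \eqref{eq:twist-hom} one has $e_\alpha(g,y)=\langle g,\phi_\alpha(y)\rangle=\langle g,\xi\rangle$, which equals $\chi_\xi(g)^{-1}$ once the normalization of $\chi_\xi$ under $\widehat X[n]\cong\Hom(X[n],\mu_n)$ is chosen compatibly with that of $e_\alpha$; then the composite is the identity functor. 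The main obstacle is precisely this sign/normalization bookkeeping — matching the discrepancy character between ${t_y}_*$ and $\widetilde{\bvr_{-y}}$ against the normalization of $\chi_\xi$, these being the two slots through which the cocycle $a$ (equivalently the pairing $e_\alpha$, equivalently $\phi_\alpha$) intervenes. An entirely parallel route avoids functors and argues with kernels directly: for $y\in G$ the kernel $\bigoplus_{\tau\in G}t^*_{(\tau,0)}{\Gamma_y}_*\mathscr{P}_\nu$ is isomorphic as a sheaf to the identity kernel $\bigoplus_{g\in G}t^*_{(g,0)}\mathcal{O}_\Delta$ of Example~\ref{ex:identity-functor} (because $\mathscr{P}_\nu\cong\mathcal{O}_X$ and $\Gamma_y$ is a translate of the diagonal), and one checks that $\zeta^{(y,\xi)}$ matches $\zeta$ under this isomorphism, the verification again reducing, via the cocycle identity for $a$ and \eqref{eq:pairing-cocycle}, to $\langle g,\xi\rangle=e_\alpha(g,y)$.
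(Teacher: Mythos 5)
Your Route~1 is correct and genuinely different from the paper's argument, which works entirely at the level of Fourier--Mukai kernels: the paper writes down an explicit sheaf isomorphism $\bigoplus_g t^*_{(g,0)}\Gamma_{y*}\mathcal{O}_X\to\bigoplus_g t^*_{(g+y,0)}\mathcal{O}_\Delta$ and then verifies by hand, using the $2$-cocycle identity for $a$, that the linearizations $\zeta^{(y,\xi)}$ and $\zeta$ match under it. You instead exploit the factorization $\Phi_{(y,\xi)}\cong {t_y}_*\circ\bigl((-)\otimes(\mathscr{P}_\nu,\chi_\xi)\bigr)$ and observe that, because $y\in G$, both factors fix the underlying object (up to the canonical $\theta_{-y}$) and merely rescale the linearization by a character of $G$. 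Your computation of the discrepancy character for ${t_y}_*$ is right: with the compatibility $a_{g,h}\theta_{gh}=\theta_g\circ t_g^*(\theta_h)$, the natural map $\theta_{-y}$ interchanges the strictly-lifted ${t_y}_*$ with $\id$ precisely up to the character $g\mapsto a_{-y,g}/a_{g,-y}=e_\alpha(-y,g)=e_\alpha(g,y)$. Combined with the paper's normalization $\chi_\xi(g)=\langle g,-\xi\rangle=e_\alpha(g,y)^{-1}$ (their equation \eqref{eq:character}), the product character is trivial exactly under the hypothesis $\xi=\phi_\alpha(y)$, which is the conceptual heart of the lemma. The trade-off: the paper's kernel computation is self-contained and feeds directly into the later computations with $\zeta^{(y,\xi)}$, whereas your functorial argument is shorter, isolates cleanly where the hypothesis is used, and avoids re-deriving the cocycle algebra. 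Your ``parallel route'' with kernels is, as you suspect, essentially the paper's proof. Watch only the normalization bookkeeping you flagged: you need $\chi_\xi(g)=\langle g,-\xi\rangle$ rather than $\langle g,\xi\rangle$, and this sign is fixed by the paper's descent convention, not a free choice.
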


\begin{proof}
Using the Weil pairing $\langle -, - \rangle$, we identify $\mathscr{P}_\nu \cong \mathcal{O}_X$ with descent data:
\begin{equation}
\label{eq:character}
\chi_\xi(g) = \langle g, -\xi \rangle = \frac{a_{g,-y}}{a_{-y,g}} \colon t^*_g\mathcal{O}_X\longrightarrow\mathcal{O}_X,\quad \forall g\in G\,,
\end{equation}
so the isomorphisms $\zeta^{(y,\xi)}$ are given by
\[
\zeta^{(y,\xi)}_{\sigma,\sigma'}=\bigoplus_{g\in G}\Bigl(\frac{a_{\sigma+g-\sigma',\sigma'}}{a_{\sigma,g}}\cdot\frac{a_{\sigma',-y}}{a_{-y,\sigma'}}\Bigr) \colon t^*_{(\sigma,\sigma')}\Bigl(\bigoplus_{g\in G}t^*_{(g,0)}{\Gamma_y}_*\mathcal{O}_X\Bigr)\longrightarrow\bigoplus_{g\in G}t^*_{(\sigma+g-\sigma',0)}{\Gamma_y}_*\mathcal{O}_X\,.
\]
Consider the isomorphism
\[
\psi\colon \bigoplus_{g\in G}t^*_{(g,0)}{\Gamma_y}_*\mathcal{O}_X=\bigoplus_{g\in G}t^*_{(g,-y)}\mathcal{O}_\Delta\xlongrightarrow{\oplus_{g\in G}a_{g+y,-y}}\bigoplus_{g\in G}t^*_{(g+y,0)}t^*_{(-y,-y)}\mathcal{O}_\Delta=\bigoplus_{g\in G}t^*_{(g+y,0)}\mathcal{O}_\Delta
\]
where the equalities are all canonical isomorphisms, we need to show that this is an isomorphism in $\bdc(X\times X)_{G\times G, \bvr, a^{-1}\boxtimes a}$, i.e., the following diagram is commutative
\begin{equation*}
\begin{tikzcd}[row sep=large,column sep=5em]
	{t^*_{(\sigma,\sigma')}\Bigl(\bigoplus\limits_{g\in G}t^*_{(g,0)}{\Gamma_y}_*\mathcal{O}_X\Bigr)} & {\bigoplus\limits_{g\in G}t^*_{(\sigma+g-\sigma',0)}{\Gamma_y}_*\mathcal{O}_X} \\
	{t^*_{(\sigma,\sigma')}\Bigl(\bigoplus\limits_{g\in G}t^*_{(g+y,0)}\mathcal{O}_\Delta\Bigr)} & {\bigoplus\limits_{g\in G}t^*_{(\sigma+g-\sigma'+y,0)}\mathcal{O}_\Delta}
	\arrow["{\zeta^{(y,\xi)}_{\sigma,\sigma'}}", from=1-1, to=1-2]
	\arrow["{t^*_{(\sigma,\sigma')}\psi}"', from=1-1, to=2-1]
	\arrow["\psi", from=1-2, to=2-2]
	\arrow["{\zeta_{\sigma,\sigma'}}", from=2-1, to=2-2]
\end{tikzcd}
\end{equation*}
for any $(\sigma,\sigma')\in G\times G$,
which follows from the equalities
\begin{align*}
&\frac{a_{\sigma+g-\sigma'+y,-y}\cdot a_{\sigma+g-\sigma',\sigma'}\cdot a_{\sigma',-y}}{a_{\sigma,g}\cdot a_{-y,\sigma'}} =\frac{a_{\sigma+g-\sigma'+y,\sigma'-y}\cdot a_{\sigma',-y}}{a_{\sigma,g}}\\
=&\frac{a_{\sigma+g-\sigma'+y,\sigma'}\cdot a_{\sigma+g+y,-y}}{a_{\sigma,g}}
=\frac{a_{\sigma+g+y-\sigma',\sigma'}\cdot a_{g+y,-y}}{a_{\sigma,g+y}}
\end{align*}
since $a\in\rZ^2(G,\mathbb{G}_m)$.
\end{proof}

\subsection{$A_{(X,\alpha)}$ as a group of autoequivalences}

After explaining some particular autoequivalences in the previous subsection, we now describe a whole group of autoequivalences.
Recall that for the untwisted derived category $\bdc(X)$, there is a natural embedding
\[
X\times\widehat{X}\xhookrightarrow{\quad}\Aut\bigl(\bdc(X)\bigr)
\]
sending every $(x,\xi)\in X\times\widehat{X}$ to the functor ${t_x}_*(-)\otimes\mathscr{P}_\xi$.
When considering twisted derived category $\bdc(X,\alpha)$, we have a similar construction.

\begin{definition}
For any $z=[(y,\xi)]\in A_{(X,\alpha)}$, define
\[
\Phi_z\in\Aut(\bdc(X,\alpha))
\]
to be the autoequivalence that fits into the following commutative diagram
\[\begin{tikzcd}[sep=large]
	{\bdc(X,\alpha)} & {\bdc(X,\alpha)} \\
	{\bdc(X)_{G,\bvr,a}} & {\bdc(X)_{G,\bvr,a}}\mathrlap{\,,}
	\arrow["{\Phi_z}", from=1-1, to=1-2]
	\arrow["\simeq"', from=1-1, to=2-1]
	\arrow["\simeq", from=1-2, to=2-2]
	\arrow["{\Phi_{(y,\xi)}}", from=2-1, to=2-2]
\end{tikzcd}\]
where $\Phi_{(y,\xi)}$ is defined in \eqref{eq:equivariant-y-xi} and the vertical equivalence is given by Corollary \ref{cor:av-twisted-equivariant-equivalence}.
\end{definition}
Note that according to Lemma \ref{lem:trivial-functors}, $\Phi_z$ is well defined up to isomorphism.

\begin{proposition}
\label{prop:embedding-A-to-Auto}
The map $z\longmapsto\Phi_z$ defines an injective homomorphism
\[
A_{(X,\alpha)}(k)\xhookrightarrow{\quad}\Aut\bigl(\bdc(X,\alpha)\bigr)\,,
\]
where $A_{(X,\alpha)}(k)$ is the group of $k$-points of $A_{(X,\alpha)}$.
\end{proposition}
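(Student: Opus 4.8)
The plan is to verify first that $z \mapsto \Phi_z$ is a group homomorphism and then that it is injective. For the homomorphism property, I would fix lifts $(y_1,\xi_1)$ and $(y_2,\xi_2)$ in $X \times \widehat{X}$ of classes $z_1, z_2 \in A_{(X,\alpha)}$, so that $(y_1+y_2,\xi_1+\xi_2)$ is a lift of $z_1+z_2$. From the explicit description \eqref{eq:equivariant-y-xi}, each $\Phi_{(y_i,\xi_i)}$ is isomorphic to ${t_{y_i}}_*(-)\otimes(\mathscr{P}_{\nu_i},\chi_{\xi_i})$ on $\bdc(X)_{G,\bvr,a}$. Composing and using the standard compatibilities ${t_{y_1}}_*{t_{y_2}}_* \cong {t_{y_1+y_2}}_*$, the isomorphism \eqref{eq:commutativity-actions} moving ${t_{y_1}}_*$ past the tensor factor, and $(\mathscr{P}_{\nu_1},\chi_{\xi_1})\otimes(\mathscr{P}_{\nu_2},\chi_{\xi_2}) \cong (\mathscr{P}_{\nu_1+\nu_2},\chi_{\xi_1+\xi_2})$ (which holds since $\mathscr{P}_{\xi_1}\otimes\mathscr{P}_{\xi_2}\cong\mathscr{P}_{\xi_1+\xi_2}$ on $X$ and $[n]_X^*$ is compatible with tensor products, the descent cocycles multiplying accordingly), one obtains $\Phi_{(y_1,\xi_1)}\circ\Phi_{(y_2,\xi_2)} \cong \Phi_{(y_1+y_2,\xi_1+\xi_2)}$. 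Transporting back through the equivalence of Corollary \ref{cor:av-twisted-equivariant-equivalence} gives $\Phi_{z_1}\circ\Phi_{z_2}\cong\Phi_{z_1+z_2}$ in $\Aut(\bdc(X,\alpha))$. Well-definedness of the map (independence of the lift) is exactly Lemma \ref{lem:trivial-functors}: two lifts of the same $z$ differ by an element of the graph $K$ of $\phi_\alpha$, i.e. by some $(g,\phi_\alpha(g))$ with $g\in G$, and the corresponding $\Phi_{(g,\phi_\alpha(g))}$ is isomorphic to the identity, so the ambiguity washes out under composition as well.

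For injectivity I would suppose $\Phi_z \cong \id$ for $z = [(y,\xi)]$ and show $z = 0$, i.e. $(y,\xi)\in K$. Applying $\Phi_z$ to a skyscraper-type object (the image under the equivalence of a suitable twisted sheaf supported at a point, or more directly working with the Fourier--Mukai kernel) forces ${t_y}_*$ to be isomorphic, as a functor, to $(-)\otimes(\text{line bundle})$; comparing supports of kernels shows $y$ must be $n$-torsion, so $y\in G$. Once $y\in G$, the functor $\Phi_{(y,\xi)}$ is, up to the twist by ${t_y}_*$, tensoring by the $G$-linearized line bundle $(\mathscr{P}_\nu,\chi_\xi)$; Lemma \ref{lem:trivial-functors} tells us $\Phi_{(y,\phi_\alpha(y))}\cong\id$, so $\Phi_{(y,\xi)}\cong\id$ forces $\Phi_{(0,\xi-\phi_\alpha(y))}\cong\id$ after composing with the inverse of $\Phi_{(y,\phi_\alpha(y))}$ using the already-established homomorphism property. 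This reduces injectivity to the case $y = 0$: tensoring by a line bundle $\mathscr{P}_\eta$ (lifted to the equivariant category) is isomorphic to the identity only if $\mathscr{P}_\eta \cong \mathcal{O}_X$ as a plain line bundle, hence $\eta = 0$ in $\widehat{X}$; thus $\xi = \phi_\alpha(y)$ and $(y,\xi)\in K$, i.e. $z=0$.

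The main obstacle I anticipate is the bookkeeping in the injectivity step, specifically making rigorous the passage "tensoring by a linearized line bundle is the identity $\Rightarrow$ the underlying line bundle is trivial" \emph{inside the equivariant category} $\bdc(X)_{G,\bvr,a}$, where one must be careful that $(-)\otimes(\mathscr{P}_\nu,\chi_\xi)$ is an equivalence of the \emph{twisted} equivariant category and a priori only determines $(\mathscr{P}_\nu,\chi_\xi)$ up to the ambiguity already present in Lemma \ref{lem:trivial-functors}. The clean way around this is to argue at the level of Fourier--Mukai kernels in $\bdc(X\times X)_{G\times G,\bvr,a^{-1}\boxtimes a}$: an autoequivalence isomorphic to the identity has kernel isomorphic to the linearized $\mathcal{O}_\Delta$ of Example \ref{ex:identity-functor}, and comparing this with the explicit kernel $\bigl(\bigoplus_\tau t^*_{(\tau,0)}{\Gamma_y}_*\mathscr{P}_\nu,\zeta^{(y,\xi)}\bigr)$ pins down both the support (giving $y\in G$, up to the translations already built in) and, via the linearization data, the class $\xi$ modulo $\phi_\alpha(G)$ — which is precisely the statement $(y,\xi)\in K$. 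Everything else is a routine combination of the functorial isomorphisms collected in the previous subsections.
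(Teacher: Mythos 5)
Your plan ends up at essentially the same argument as the paper's, once the fix in your final paragraph is taken as authoritative. The paper does not first reduce to the case $y=0$ via the homomorphism property; it directly compares the kernel $\bigl(\bigoplus_g t^*_{(g,0)}\Gamma_{y_0*}\mathscr{P}_{\nu_0},\zeta^{(y_0,\xi_0)}\bigr)$ with the kernel of the identity from Example~\ref{ex:identity-functor}. Support considerations give $y_0\in G$ and $\xi_0\in\widehat{X}[n]$; simplicity of each summand $t^*_{(g,0)}\mathcal{O}_\Delta$ lets one write the isomorphism $\psi_0$ as a function $\omega\colon G\to\mathbb{G}_m$; imposing compatibility with the $(G\times G,a^{-1}\boxtimes a)$-linearizations and specialising $\sigma'=\sigma$ yields $\langle\sigma,-\phi_\alpha(g)\rangle\cdot\langle\sigma,-\xi_0\rangle=\langle\sigma,-\phi_\alpha(g+y_0)\rangle$ for all $\sigma$, and nondegeneracy of the Weil pairing forces $\xi_0=\phi_\alpha(y_0)$; Lemma~\ref{lem:trivial-functors} then closes the loop. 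This is precisely the kernel-level route you propose as the ``clean way around'' the obstacle.

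The obstacle you flag is real and your first-pass phrasing does have a gap: once $\eta\in\widehat{X}[n]$, the underlying object of the equivariant lift $(\mathscr{P}_\nu,\chi_\eta)$ is already $\mathcal{O}_X$ (since $\nu=[n]_{\widehat{X}}(\eta)=0$), so the conclusion ``$\mathscr{P}_\eta\cong\mathcal{O}_X$ as a plain line bundle'' cannot be read off from the underlying sheaf alone --- the entire content lives in the character $\chi_\eta$, i.e.\ in the linearization, and the only way to extract it is to compare linearizations of the FM kernels, exactly as you and the paper do. So: same approach, and you correctly anticipate and resolve the one genuinely delicate point.
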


\begin{proof}
This assignment is a homomorphism by \eqref{eq:commutativity-actions}.
We now show the injectivity,
suppose that $(y_0,\xi_0)$ belongs to the kernel of this homomorphism,
which means that there is an isomorphism
\[
\psi_0\colon \bigoplus_{g\in G}t^*_{(g,0)}{\Gamma_{y_0}}_*\mathscr{P}_{\nu_0}\longrightarrow\bigoplus_{g\in G}t^*_{(g,0)}\mathcal{O}_\Delta
\]
in $\bdc(X\times X)$,
where $\nu_0=[n]_{\widehat{X}}(\xi_0)$.
By comparison on both sides, it is immediate that we have $y_0\in X[n]$ and $\xi_0\in \widehat{X}[n]$.
Then since each direct summand $t^*_{(g,0)}\mathcal{O}_{\Delta}$ is simple, $\psi_0$ is equivalent to a map $\omega \colon G\to\mathbb{G}_m$.
For any $g\in G$,
\[
\omega(g)\colon t^*_{(g,0)}{\Gamma_{y_0}}_*\mathcal{O}_X\longrightarrow t^*_{(g+y_0,0)}\mathcal{O}_\Delta
\]
is an isomorphism and $\psi_0=\bigoplus_{g\in G}\omega(g)$.

Similar to the proof of Lemma \ref{lem:trivial-functors}, since $\psi_0$ is an isomorphism in $\bdc(X\times X)_{G\times G, \bvr, a^{-1}\boxtimes a}$,
the map $\omega$ must satisfy
\[
\omega(\sigma+g-\sigma')\cdot\frac{a_{\sigma+g-\sigma',\sigma'}}{a_{\sigma,g}}\cdot\langle\sigma',-\xi_0\rangle=\frac{a_{\sigma+g+y_0-\sigma',\sigma'}}{a_{\sigma,g+y_0}}\cdot\omega(g), \quad \forall\ \sigma,\sigma'\in G\,.
\]
Taking $\sigma'=\sigma$, we obtain that
\[
\langle\sigma,-\phi_\alpha(g)\rangle\cdot\langle\sigma,-\xi_0\rangle=\langle\sigma,-\phi_\alpha(g+y_0)\rangle\,,
\]
which holds for any $\sigma\in G$
and the non-degeneracy of the Weil pairing forces that
\[
\xi_0=\phi_\alpha(y_0)\,.
\]
Then the proposition follows from Lemma \ref{lem:trivial-functors}.
\end{proof}

\section{Twisted Orlov functors and adjoint equivalences}

We now turn to the main technical construction of the paper: the twisted analogue of Orlov's Fourier--Mukai equivalence.
In the untwisted case, Orlov gave an equivalence between \(\mathrm{D}^{\mathrm{b}}(X\times \widehat{X})\) and \(\mathrm{D}^{\mathrm{b}}(X\times X)\):
\[
\bXi_X \coloneq \bR\mu_* \circ (\id \times \Phi_{\scP}) \colon \bdc(X \times \widehat{X}) \longrightarrow \bdc(X \times X)\,,
\]
where \(\mu(x_1, x_2) = (x_2, x_1 + x_2)\). (Our convention differs slightly from Orlov's.)
Our goal here is to show that the same construction goes through when \(X\times \widehat{X}\) is replaced by the symplectic abelian variety \(A_{(X,\alpha)}\), at the cost of twisting the product category by \(\alpha^{-1}\boxtimes \alpha\). The resulting functor, which we call the twisted Orlov functor, will serve as the main bridge between the two sides of the Torelli argument.

\subsection{Setup}
\label{subsec:rep-functor}

Set \(A \coloneq A_{(X,\alpha)}\).
We first fix the cocycles and equivariant identifications needed to define the twisted Orlov functor.
\begin{enumerate}[label={(\arabic*)}]
    \item Consider the following Cartesian diagram,
        \[
        \begin{tikzcd}[sep=large]
        X \times \widehat{X} \ar[r, "\pr_1"] \ar[d, "\pi"'] & X \ar[d, "{[n]}"] \\
          A \ar[r, "q"'] & X \mathrlap{\,,}
        \end{tikzcd}
        \]
        which induces the commutative diagram:
        \[\begin{tikzcd}
        	{\rZ^2(X[n],\mathbb{G}_m)} & {\check{\rH}^2(\{{[n]_X}\colon X\to X\},\mathbb{G}_m)} & {\rH^2(X,\mathbb{G}_m)} \\
        	{\rZ^2(K,\mathbb{G}_m)} & {\check{\rH}^2(\pi\colon \{X\times\widehat{X}\to  A\},\mathbb{G}_m)} & {\rH^2(  A,\mathbb{G}_m)}\mathrlap{\,.}
        	\arrow[from=1-1, to=1-2]
        	\arrow["\cong"', from=1-1, to=2-1]
        	\arrow[from=1-2, to=1-3]
        	\arrow["{\pr_1^\ast}"', from=1-2, to=2-2]
        	\arrow["{q^*}"', from=1-3, to=2-3]
        	\arrow[from=2-1, to=2-2]
        	\arrow[from=2-2, to=2-3]
        \end{tikzcd}\]
        Hence the normalized $2$-cocycle
        \begin{equation*}
        (\pr_1^* a)_{k_1, k_2} \coloneq a_{\sigma_1, \sigma_2}, \quad \text{where} \;\; k_i = (\sigma_i, \phi_\alpha(\sigma_i)) \in K\,,
        \end{equation*}
        represents the class \(q^* \alpha\) on $A$.

    \item Under the automorphism $\mu$, the $2$-cocycle $\mu^*(a^{-1}\boxtimes a)\in \rZ^2(G\times G,\mathbb{G}_m)$ is defined by
        \begin{equation*}
        \bigl(\mu^*(a^{-1}\boxtimes a)\bigr)_{\tau_1,\tau_2}\coloneq \frac{a_{\sigma_1+{\sigma'}_1,\sigma_2+{\sigma'}_2}}{a_{{\sigma'}_1,{\sigma'}_2}},\quad\ \tau_i=(\sigma_i,\sigma'_i)\in G\times G,\; i =1,2\,.
        \end{equation*}

    \item According to Proposition \ref{prop:twisted-equivariant-equivalence}, we have canonical equivalences
    \begin{subequations}
    \label{eq:twisted-equi}
        \begin{align}
           \label{eq:twisted-equi-A}
           \bdc(A,q^*\alpha)&\simeq \bdc(X\times\widehat{X})_{K, \bvr, \pr_1^* a}\,,\\
           \label{eq:twisted-equi-auto}
           \bdc(X\times X,\alpha^{-1}\boxtimes\alpha)&\simeq\bdc(X\times X)_{G\times G, \bvr, a^{-1}\boxtimes a}\,,\\
           \label{eq:twisted-equi-middle}
           \bdc(X\times X,\mu^*(\alpha^{-1}\boxtimes\alpha)) &\simeq\bdc(X\times X)_{G\times G, \bvr, \mu^*(a^{-1}\boxtimes a)}\,.
        \end{align}
    \end{subequations}
\end{enumerate}

\subsection{Identifications of Equivariant Categories}

Our next goal is to construct a natural equivalence between twisted derived categories of $A$ and $X\times X$.
To establish equivalences between these derived categories, we first identify certain autoequivalences.
 In the classical case,  the derived equivalence \[\id\times \Phi_\scP\colon \bdc(X\times \widehat{X})\longrightarrow \bdc(X\times X)\] identifies $\Aut\bigl(\bdc(X\times \widehat{X})\bigr)\cong \Aut\bigl(\bdc(X\times X)\bigr)$.
For actions given by translations on $X\times \widehat{X}$,
this identification is realized as follows.

\begin{proposition}
\label{prop:poincare-exchange-transaltion}
There are canonical isomorphisms of autoequivalences
\[
(\id\times\boldsymbol{\Phi}_\mathscr{P})\circ t^*_{(y,\xi)}(-)\cong \bigl(t^*_{(y,0)}(-)\otimes\pr_2^*\mathscr{P}_{-\xi}\bigr)\circ(\id\times\Phi_\mathscr{P})
\]
for any $(y,\xi)\in X\times \widehat{X}$.
\end{proposition}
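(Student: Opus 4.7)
The plan is to compute both sides as Fourier--Mukai transforms and match them through a canonical isomorphism of kernels supplied by the theorem of the square (seesaw principle).

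First, I realize $(\id \times \Phi_\scP)$ as the Fourier--Mukai transform with kernel $\pr_{23}^* \scP$ on $X_1 \times X_2 \times \widehat{X}_3$, so $(\id \times \Phi_\scP)(F) = \pr_{12,*}(\pr_{13}^*F \otimes \pr_{23}^* \scP)$ for any $F \in \bdc(X_1 \times \widehat{X}_3)$. Let $\tau = t_y \times \id \times t_\xi$ be the translation automorphism of $X_1 \times X_2 \times \widehat{X}_3$. Since $\pr_{13} \circ \tau = t_{(y,\xi)} \circ \pr_{13}$ and $\pr_{12} \circ \tau = t_{(y,0)} \circ \pr_{12}$, substituting and using that $\tau$ is an isomorphism produces
\[
(\id \times \Phi_\scP)(t^*_{(y,\xi)} F) \cong t^*_{(y,0)}\,\pr_{12,*}\bigl((\id \times t_{-\xi})^* \scP \otimes \pr_{13}^* F\bigr)\,,
\]
where $(\id \times t_{-\xi})^* \scP$ is pulled back from $X_2 \times \widehat{X}_3$ through $\pr_{23}$.

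The heart of the argument is the identity
\[
(\id \times t_{-\xi})^* \scP \cong \scP \otimes \pr_X^* \scP_{-\xi}
\]
on $X \times \widehat{X}$. This is a direct seesaw computation: the two sides have isomorphic restrictions to every fiber $\{x\} \times \widehat{X}$ (both equal to $\scP|_{\{x\}\times\widehat{X}} \in \Pic^0(\widehat{X})$, which is translation-invariant) and to every fiber $X \times \{\eta\}$ (both equal to $\scP_{\eta} \otimes \scP_{-\xi}$), and the isomorphism is normalized by the canonical rigidifications of $\scP$ along $\{0\}\times\widehat{X}$ and $X \times \{0\}$. Substituting and applying the projection formula for $\pr_{12}$ to the $\pr_{12}$-pulled-back factor $\pr_2^* \scP_{-\xi}$ yields
\[
(\id \times \Phi_\scP)(t^*_{(y,\xi)} F) \cong t^*_{(y,0)}\bigl((\id \times \Phi_\scP)(F) \otimes \pr_2^* \scP_{-\xi}\bigr)\,.
\]
Finally, since $t_{(y,0)}$ acts trivially on the $X_2$-factor, $t^*_{(y,0)} \pr_2^* \scP_{-\xi} = \pr_2^* \scP_{-\xi}$, and one obtains the claimed isomorphism of autoequivalences.

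The main obstacle is not any individual step, each of which is a standard manipulation, but verifying \emph{canonicity}: one must keep track of the seesaw trivialization, the projection formula, and the base-change isomorphisms along $\tau$ carefully enough that the resulting natural transformation is well defined and compatible with composition in $(y,\xi) \in X \times \widehat{X}$. This ultimately reduces to the fact that $\scP$ carries canonical bi-rigidifications, which pin down the seesaw isomorphism used above and thereby make the whole functorial isomorphism canonical.
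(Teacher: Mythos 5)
Your proof is correct and follows essentially the same route as the paper's: both reduce the claim to the standard exchange property of the Poincar\'e kernel under translation (what you call the seesaw identity $(\id\times t_{-\xi})^*\scP\cong\scP\otimes\pr_X^*\scP_{-\xi}$, which the paper invokes as the functorial identity $\Phi_\scP\circ t_\xi^*\cong\Phi_\scP(-)\otimes\scP_{-\xi}$). The only difference is one of granularity: the paper factors $\id\times\Phi_\scP$ formally across the external product and chains three ready-made ``canonical isomorphisms of functors,'' while you make the Fourier--Mukai kernel on $X_1\times X_2\times\widehat X_3$ explicit, conjugate by $\tau = t_y\times\id\times t_\xi$, and re-derive the exchange identity from the bi-rigidification of $\scP$. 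Your closing remark about tracking the seesaw trivialization, projection formula, and base-change isomorphism is a sensible caveat that the paper leaves implicit behind the word ``canonical.''
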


\begin{proof}
Note that we have  canonical isomorphisms between the following functors:
    \begin{align*}
        t^*_{(y,\xi)}(-)&\cong t^*_y(-)\times t^*_{\xi}(-)\,, \\
        t^*_{(y,0)}(-)\otimes \pr^*_2\mathscr{P}_{-\xi}&\cong t^*_y(-)\times (-)\otimes\mathscr{P}_{-\xi}\,, \\
        \Phi_\mathscr{P}\circ t^*_{\xi}(-)&\cong\Phi_\mathscr{P}(-)\otimes\mathscr{P}_{-\xi}\,.
    \end{align*}
Hence one can directly verify that
\begin{align*}
    (\id\times\Phi_\mathscr{P})\circ t^*_{(y,\xi)}(-)&\cong(\id\times\Phi_\mathscr{P})\circ\bigl(t^*_y(-)\times t^*_{\xi}(-)\bigr)\\
    &\cong\bigl(\id\circ t^*_y(-)\bigr)\times\bigl(\Phi_{\mathscr{P}}\circ t^*_{\xi}(-)\bigr)\\
    &\cong\bigl(t^*_y(-)\circ\id\bigr)\times\bigl(\Phi_\mathscr{P}(-)\otimes\mathscr{P}_{-\xi}\bigr)\\
    &\cong\bigl(t^*_y(-)\times (-)\otimes\mathscr{P}_{-\xi}\bigr)\circ(\id\times\Phi_\mathscr{P})\\
    &\cong\bigl(t^*_{(y,0)}(-)\otimes\pr_2^*\mathscr{P}_{-\xi}\bigr)\circ(\id\times\Phi_\mathscr{P})
\end{align*}
are canonically isomorphic.
\end{proof}

With the above identification, the action of $(K,\pr_1^\ast a)$ on $\bdc(X\times \widehat{X})$ can be identified as the action of $(G,\eta,a)$ on $\bdc(X\times X)$ given by the following data:
\begin{enumerate}[align=left,leftmargin=\parindent,labelindent=\parindent, listparindent=\parindent,labelwidth=0pt,itemindent=!,label=(\roman*)]
\item For each $\sigma\in G$, an autoequivalence $\eta_\sigma \colon \bdc(X\times X)\to \bdc(X\times X)$ defined by
\[
\eta_\sigma=t^*_{(\sigma,0)}(-)\otimes\pr^*_2\bigl(\mathscr{P}_{-\phi_{\alpha}(\sigma)}\bigr)\,;
\]
\item For $\sigma_1,\sigma_2\in G$, an isomorphism
\begin{align*}
      t^*_{(\sigma_1,0)}\Bigl(t^*_{(\sigma_2,0)}(-)\otimes\pr_2^*\bigl(\mathscr{P}_{-\phi_{\alpha}(\sigma_2)}\bigr)\Bigr)\otimes\pr^*_2\bigl(\mathscr{P}_{-\phi_{\alpha}(\sigma_1)}\bigr) \xlongrightarrow{\cong}   t^*_{(\sigma_1+\sigma_2,0)}(-)\otimes\pr^*_2\bigl(\mathscr{P}_{-\phi_{\alpha}(\sigma_1+\sigma_2)}\bigr)
\end{align*}
which is the canonical isomorphism twisted by $a_{\sigma_1,\sigma_2}$.
\end{enumerate}
Therefore, we obtain an equivalence of the corresponding equivariant categories:
\begin{equation}
\bdc(X\times \widehat{X})_{K, \bvr, \pr_1^*a} \simeq \bdc(X\times X)_{G,\eta,a}.
\label{eq:equivariant-FM}
\end{equation}

The next step is to relate the equivariant category $\bdc(X\times X)_{G,\eta,a}$ to the twisted derived category $\bdc\bigl(X\times X,\mu^*(\alpha^{-1}\boxtimes \alpha)\bigr)$,
established by the following key result.

\begin{proposition}
\label{prop:success-quotient}
There exists a canonical equivalence
\begin{equation}
\label{eq:success-quotient}
\bdc(X\times X)_{G,\eta,a} \simeq \bdc(X\times X)_{G\times G, \bvr, \mu^*(a^{-1}\boxtimes a)}\,,
\end{equation}
which induces an equivalence between $\bdc(X\times X)_{G,\eta,a}$ and $\bdc\bigl(X\times X,\mu^*(\alpha^{-1}\boxtimes \alpha)\bigr)$.

Moreover, we can define an equivalence
\[
(\id\times\Phi_{\mathscr{P}})_{G} \colon \bdc(A, q^*\alpha) \longrightarrow \bdc\bigl(X \times X, \mu^*(\alpha^{-1} \boxtimes \alpha)\bigr)
\]
that fits into the following commutative diagram of equivalences:
\begin{equation}
\begin{tikzcd}[row sep=2.5em,column sep=2.5em]
	{\bdc(  A,q^*\alpha)} && {\bdc\bigl(X\times X,\mu^*(\alpha^{-1}\boxtimes\alpha)\bigr)} \\
	{\bdc(X\times\widehat{X})_{K, \bvr, \pr_1^*a}} & {\bdc(X\times X)_{G, \bvr, \eta,a}} & {\bdc(X\times X)_{G\times G, \bvr, \mu^*(a^{-1}\boxtimes a)}} \mathrlap{\,.}
	\arrow["(\id\times\Phi_{\mathscr{P}})_{G}", from=1-1, to=1-3]
	\arrow["(\ref{eq:twisted-equi-A})"', from=1-1, to=2-1]
	\arrow["(\ref{eq:equivariant-FM})", from=2-1, to=2-2]
	\arrow["(\ref{eq:success-quotient})", from=2-2, to=2-3]
	\arrow["(\ref{eq:twisted-equi-middle})"', from=2-3, to=1-3]
\end{tikzcd}
\end{equation}
\end{proposition}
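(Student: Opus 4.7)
The plan is to apply Proposition~\ref{prop:successive_equivariant} to the normal subgroup $H = 0\times G \trianglelefteq G\times G$, so as to unfold the right-hand side of \eqref{eq:success-quotient} as an iterated equivariant category (first by $H$, then by $(G\times G)/H \cong G$), and then match the resulting data with $(\eta, a)$.

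First, I would verify that the cocycle $\mu^*(a^{-1}\boxtimes a)$ restricted to $H\times H$ is trivial: for $(0,\sigma'_1), (0,\sigma'_2)\in H$, one computes $\mu^*(a^{-1}\boxtimes a)_{(0,\sigma'_1),(0,\sigma'_2)} = a_{\sigma'_1,\sigma'_2}/a_{\sigma'_1,\sigma'_2} = 1$. Hence $H$ acts on $\bdc(X\times X)$ by untwisted translation on the second factor, and applying the untwisted case of Corollary~\ref{cor:av-twisted-equivariant-equivalence} to $\id\times[n]_X \colon X\times X \to X\times X$ yields $\bdc(X\times X)_{H,\bvr|_H,1} \simeq \bdc(X\times X)$. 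Proposition~\ref{prop:successive_equivariant} then identifies $\bdc(X\times X)_{G\times G,\bvr,\mu^*(a^{-1}\boxtimes a)}$ with the category of $(G\times G)/H$-equivariant objects in $\bdc(X\times X)$ for some induced $(\bar\rho,\bar\sigma)$.

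The heart of the proof is to compute $(\bar\rho,\bar\sigma)$ and match it with $(\eta,a)$. I identify $(G\times G)/H \cong G$ by choosing $(\sigma,0)$ as lifts. For the cocycle, an elementary calculation gives $\bar\sigma_{\sigma_1,\sigma_2} = \mu^*(a^{-1}\boxtimes a)_{(\sigma_1,0),(\sigma_2,0)} = a_{\sigma_1,\sigma_2}$, as desired. For the autoequivalence $\bar\rho_\sigma$, the raw descent of $t^*_{(\sigma,0)}$ through $\id\times[n]_X$ yields translation by $(\sigma,0)$; however, conjugating the $H$-linearization of an object by the lift $(\sigma,0)$ picks up a character $\tau \mapsto \mu^*(a^{-1}\boxtimes a)_{(\sigma,0),(0,\tau)}/\mu^*(a^{-1}\boxtimes a)_{(0,\tau),(\sigma,0)} = a_{\sigma,\tau}/a_{\tau,\sigma} = e_\alpha(\sigma,\tau) = \langle\tau,\phi_\alpha(\sigma)\rangle$ on the second factor. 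Via \eqref{eq:character} and the identification of $\mathscr{P}_{-\phi_\alpha(\sigma)}$ with $([n]_X^*\mathscr{P}_{-\phi_\alpha(\sigma)}, \chi_{-\phi_\alpha(\sigma)}) \in \Coh(X)_G$, this character is exactly the descent datum for tensoring with $\mathscr{P}_{-\phi_\alpha(\sigma)}$ on the second factor. Thus $\bar\rho_\sigma \cong t^*_{(\sigma,0)}(-)\otimes\pr_2^*\mathscr{P}_{-\phi_\alpha(\sigma)} = \eta_\sigma$, establishing \eqref{eq:success-quotient}. Composing with \eqref{eq:twisted-equi-middle} then produces the equivalence with $\bdc(X\times X, \mu^*(\alpha^{-1}\boxtimes\alpha))$.

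Finally, I define $(\id\times\Phi_{\mathscr{P}})_G$ as the composition of the equivalences along the bottom row with \eqref{eq:twisted-equi-middle}, so the diagram commutes by construction. That this is genuinely induced by $\id\times\Phi_{\mathscr{P}}$ on underlying objects follows from Proposition~\ref{prop:poincare-exchange-transaltion}, which ensures $\id\times\Phi_{\mathscr{P}}$ intertwines the $(K,\bvr,\pr_1^*a)$-action on $\bdc(X\times\widehat{X})$ with the $(G,\eta,a)$-action on $\bdc(X\times X)$ used in \eqref{eq:equivariant-FM}. The main obstacle is the bookkeeping in the previous paragraph: correctly tracking how the mixed cocycle values $\mu^*(a^{-1}\boxtimes a)_{(\sigma,0),(0,\tau)}$ combine with the Weil pairing identification of $\chi_{-\phi_\alpha(\sigma)}$ to produce the tensor factor, and verifying that the trivializations ultimately assemble into an isomorphism of $G$-actions. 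Once this identification is carried out, the commutativity of the square reduces to a diagram chase at the level of Fourier--Mukai kernels.
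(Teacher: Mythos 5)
Your proposal follows essentially the same route as the paper: peel off the normal subgroup $0\times G$ (over which the cocycle $\mu^*(a^{-1}\boxtimes a)$ becomes trivial), apply Proposition~\ref{prop:successive_equivariant} to get an iterated equivariant category, compute the induced $G\times 0$-action and cocycle, and then match the conjugated $H$-linearization with the descent data $\chi_{-\phi_\alpha(\sigma)}$ so the induced action is exactly $(\eta,a)$. The only caveat is a small bookkeeping slip: $a_{\sigma,\tau}/a_{\tau,\sigma}=e_\alpha(\sigma,\tau)=\langle\sigma,\phi_\alpha(\tau)\rangle$, whereas the character needed is $\langle\tau,\phi_\alpha(\sigma)\rangle=a_{\tau,\sigma}/a_{\sigma,\tau}$; the paper's computation uses the latter, and this amounts only to the order in which the mixed cocycle values appear in the conjugation, so the argument is unaffected.
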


\begin{proof}
We begin with the subgroup $0 \times G \subseteq G \times G$, and note that the quotient satisfies \[(G \times G)/(0 \times G) \cong G \times 0\,.\]
A key observation is that the cocycle becomes trivial when restricted to this subgroup, i.e.,
\[
\mu^\ast(a^{-1} \boxtimes a)|_{0 \times G} = 1\,.
\]
By Proposition \ref{prop:successive_equivariant}, this induces a natural action of $(G\times 0, \bar{\bvr}, \bar{a})$ on the equivariant category
\[
\bdc(X\times X)_{0\times G, \bvr|_{0\times G},  \mu^\ast(a^{-1}\boxtimes a)|_{0\times G}} = \bdc(X\times X)_{0\times G, \bvr}\,,
\]
yielding the canonical equivalence
\begin{equation}
\label{eq:induced-action}
\bdc(X \times X)_{G \times G, \bvr, \mu^\ast(a^{-1} \boxtimes a)} \simeq \bigl( \bdc(X \times X)_{0 \times G,\bvr} \bigr)_{G \times 0, \bar{\bvr}, \bar{a}}\,.
\end{equation}

The next objective is to identify this induced action with the original $(G, \eta, a)$-action on $\bdc(X\times X)$ via the natural equivalence
\begin{equation}
\label{eq:untwisted-quotient}
\Psi\colon \bdc(X\times X)_{0\times G, \bvr} \simeq \bdc(X\times (X/G)) \simeq \bdc(X\times X)\,.
\end{equation}
Following \cite[Proposition 3.3]{BO23}, the $(G \times 0, \bar{\bvr}, \bar{a})$-action can be specified by
\begin{itemize}
    \item \textbf{Autoequivalences}: For any $(\sigma, 0) \in G \times 0$, define:
        \[
        \begin{aligned}
            \bar{\bvr}_{(\sigma,0)} \colon \bdc(X\times X)_{0\times G, \bvr}&\longrightarrow \bdc(X\times X)_{0\times G, \bvr}\\
            \Bigl( E, \bigl\{ \theta_{(0,\sigma')} \bigr\} \Bigr) & \longmapsto \Bigl( t_{(\sigma,0)}^* E, \bigl\{ \langle\sigma',\phi_\alpha(\sigma)\rangle \cdot t_{(\sigma,0)}^*\bigl(\theta_{(0,\sigma')}\bigr) \bigr\} \Bigr)\,,
        \end{aligned}
        \]
        where the isomorphism  $\langle\sigma',\phi_\alpha(\sigma)\rangle \cdot t_{(\sigma,0)}^*\bigl(\theta_{(0,\sigma')}\bigr)$ is given by
        \[
        t_{(0,\sigma')}^* t_{(\sigma,0)}^* E \xlongrightarrow{a_{\sigma',\sigma}} t_{(\sigma,\sigma')}^* E \xlongrightarrow{a_{\sigma,\sigma'}^{-1}} t_{(\sigma,0)}^* t_{(0,\sigma')}^* E \xlongrightarrow{t_{(\sigma,0)}^*\bigl(\theta_{(0,\sigma')}\bigr)} t_{(\sigma,0)}^* E\,.
        \]
        Here,  the compatibility condition is ensured by the identity $\frac{a_{\sigma',\sigma}}{a_{\sigma,\sigma'}} = \langle\sigma',\phi_\alpha(\sigma)\rangle$.

    \item \textbf{Cocycle and Isomorphisms}: For $\sigma_1, \sigma_2 \in G$, we have:
        \[
        \bar{a}_{(\sigma_1,0),(\sigma_2,0)} = \mu^\ast(a^{-1}\boxtimes a)_{(\sigma_1,0),(\sigma_2,0)} = a_{\sigma_1,\sigma_2}\,.
        \]
        The isomorphism $\bar{\bvr}_{(\sigma_1,0)} \circ \bar{\bvr}_{(\sigma_2,0)} \to \bar{\bvr}_{(\sigma_1+\sigma_2,0)}$ is given by the natural identification $t_{(\sigma_1,0)}^* t_{(\sigma_2,0)}^* \cong t_{(\sigma_1+\sigma_2,0)}^*$, twisted by multiplication with $a_{\sigma_1,\sigma_2}$.
\end{itemize}

We now try to establish an identification between the above two actions with the equivalence $\Psi$, through the following commutative diagram:
\begin{equation}
\label{diag:identify-actions}
\begin{tikzcd}[column sep=8em,row sep=4em]
\bdc(X \times X)_{0\times G, \bvr} \ar[r, "\bar{\bvr}_{(\sigma,0)}"] \ar[d, "\Psi"'] & \bdc(X \times X)_{0\times G, \bvr} \ar[d, "{\Psi}"] \\
\bdc(X\times X) \ar[r, "t_{(\sigma,0)}^\ast(-)\otimes \pr_2^\ast\scP_{-\phi_\alpha(\sigma)}"] & \bdc(X\times X)\mathrlap{\,.}
\end{tikzcd}
\end{equation}

To establish such identification, first observe that the action of $\bar{\bvr}_{(\sigma,0)}$ on the equivariant category coincides with the functor
\[
t^*_{(\sigma,0)}(-)\otimes\bigl(\mathcal{O}_{X\times X},\chi_{-\phi_\alpha(\sigma)}\bigr)\colon \bdc(X \times X)_{0\times G, \bvr}\longrightarrow \bdc(X \times X)_{0\times G, \bvr}\,,
\]
where
\begin{itemize}
    \item $t^*_{(\sigma,0)}(-)$ is an autoequivalence induced by the canonical isomorphisms $t^*_{(\sigma,0)}t^*_{(0,g)}\cong t^*_{(0,g)}t^*_{(\sigma,0)}$;
    \item $\chi_{-\phi_\alpha(\sigma)}$ denotes the isomorphisms associated to the characters $\langle-,\phi_\alpha(\sigma)\rangle$ on $0\times G$, explicitly given by
        \[
        \chi_{-\phi_\alpha(\sigma)}(g)=\langle g,\phi_\alpha(\sigma)\rangle\colon t^*_{(0,g)}\mathcal{O}_{X\times X}\longrightarrow\mathcal{O}_{X\times X},\quad \forall (0,g)\in 0\times G\,.
        \]
\end{itemize}

Then since the equivalence $\Psi$ is naturally induced by the \'etale covering $\id\times[n]_X$, we obtain canonical correspondences between the functors $\bar{\bvr}_{(g,0)}$ and $t_{(\sigma,0)}^\ast(-)\otimes \pr_2^\ast\scP_{-\phi_\alpha(\sigma)}$ (cf. \cite[Example 3.4]{BO23}), establishing the commutativity of diagram \eqref{diag:identify-actions}.

Crucially, $\Psi$ identifies the $(G \times 0, \bar{\bvr}, \bar{a})$-action on $\bdc(X\times X)_{0\times G, \bvr}$ with the $(G, \eta, a)$-action on $\bdc(X\times X)$, as both of them are canonically twisted by the same $2$-cocycle $a\in\rZ^2(G,\mathbb{G}_m)$, which yields the equivalence
\begin{equation}
\label{eq:success-quotient-step1}
\bigl( \bdc(X \times X)_{0 \times G, \bvr} \bigr)_{G \times 0, \bar{\bvr}, \bar{a}} \simeq \bdc(X\times X)_{G,\eta,a}\,.
\end{equation}

Finally, with the equivalence \eqref{eq:induced-action} (and up to quasi-inverse), we arrive at the desired equivalence \eqref{eq:success-quotient}:
\[
\bdc(X\times X)_{G,\eta,a} \simeq \bdc(X\times X)_{G\times G, \bvr, \mu^*(a^{-1}\boxtimes a)}\,.
\]
\end{proof}

\subsection{The Twisted Orlov Functor}

In the untwisted case, as we have seen,
the Orlov functor $\bXi_X$ exhibits any point of $X\times \widehat{X}$ as an autoequivalence on $X\times X$.
We now define the twisted Orlov functor in the same spirit.

\begin{definition}
\label{def:rep_functor}
The \emph{twisted Orlov functor} is the composition:
\[
 \bXi_{X,\alpha}\colon \bdc(  A, q^*\alpha) \xlongrightarrow{(\id\times\Phi_{\mathscr{P}})_{G}} \bdc(X \times X, \mu^*(\alpha^{-1} \boxtimes \alpha)) \xlongrightarrow{\;\mu_*\;} \bdc(X \times X, \alpha^{-1} \boxtimes \alpha)\,.
\]
\end{definition}

The following result justifies that the functor we constructed is a generalization of the Orlov functor.
\begin{proposition}
\label{prop:point-to-translation}
For any $z\in A$, the Fourier--Mukai transform $\Psi:\bdc(X,\alpha)\to \bdc(X,\alpha)$ with kernel
\[
\bXi_{X,\alpha}\bigl(k(z)\bigr)\in\bdc(X\times X,\alpha^{-1}\boxtimes\alpha)
\]
is isomorphic to the autoequivalence
\[
\Phi_z\colon\bdc(X,\alpha)\longrightarrow\bdc(X,\alpha)
\]
defined in Proposition \ref{prop:embedding-A-to-Auto}.
\end{proposition}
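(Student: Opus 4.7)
The plan is to compute $\bXi_{X,\alpha}(k(z))$ explicitly by tracing the skyscraper through each step of the definition of the twisted Orlov functor in the equivariant model, and then to compare the result with the equivariant Fourier--Mukai kernel of $\Phi_z$ given in equation \eqref{eq:equivariant-y-xi}.

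First, I would choose a lift $(y,\xi) \in X \times \widehat{X}$ of $z$ and note that under the equivalence \eqref{eq:twisted-equi-A} the object $k(z) \in \bdc(A, q^*\alpha)$ corresponds to the inflation $\Inf_{K,\bvr,\pr_1^*a}(k(y,\xi))$; this is because the \'etale pullback $\pi^*k(z)$ is canonically the direct sum of skyscrapers over the fiber $\pi^{-1}(z) = \{t_k(y,\xi)\}_{k \in K}$, with its tautological permutation linearization. The next step is to apply $(\id \times \Phi_{\scP})_G$. The key observation is that every equivariant Fourier--Mukai transform commutes with inflation, since both are additive constructions indexed by the acting group. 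Combined with the classical identity $(\id \times \Phi_{\scP})(k(y,\xi)) \cong k(y) \boxtimes \scP_\xi$, this computes the image of $k(z)$ in $\bdc(X\times X, \mu^*(\alpha^{-1}\boxtimes\alpha))$ as the inflation of $k(y) \boxtimes \scP_\xi$ along the composite of \eqref{eq:equivariant-FM}, \eqref{eq:success-quotient} and \eqref{eq:twisted-equi-middle}.

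I would then apply $\mu_*$. A direct computation using $\mu(x_1,x_2) = (x_2, x_1+x_2)$ gives $\mu_*(k(y) \boxtimes \scP_\xi) \cong {\Gamma_y}_* \scP_\xi$, because $\mu$ restricts to an isomorphism $\{y\} \times X \to \Gamma_y \subset X \times X$ with $\pr_2 \circ \mu|_{\{y\}\times X} = t_y$. Under the descent equivalence $\Psi$ from \eqref{eq:untwisted-quotient}, the line bundle $\scP_\xi$ corresponds to the pair $(\scP_\nu, \chi_\xi) \in \bdc(X)_{G,\bvr}$, so after inflating the resulting $G_\Delta$-linearized object from the diagonal to $G \times G$ we recover the underlying object $\bigoplus_{\tau \in G} t^*_{(\tau,0)}{\Gamma_y}_*\scP_\nu$ of the kernel exhibited in \eqref{eq:equivariant-y-xi}.

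The main obstacle will be the final step: verifying that the equivariant structure obtained from the above composition agrees with the specific linearization $\zeta^{(y,\xi)}$. This amounts to tracking how the cocycles $\pr_1^* a$, $a$ (built into the $(G,\eta,a)$-action), and $\mu^*(a^{-1} \boxtimes a)$ combine along the chain of equivalences, and then comparing with the explicit formula for $\zeta^{(y,\xi)}$. The required identities are consequences of the $2$-cocycle condition for $a$ together with the descent identity $\chi_\xi(g) = \langle g, -\xi\rangle$ from \eqref{eq:character}, used in the same spirit as the cocycle manipulations in Example~\ref{ex:identity-functor} and Lemma~\ref{lem:trivial-functors}. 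Once the linearizations are matched, $\bXi_{X,\alpha}(k(z))$ is isomorphic, as an equivariant Fourier--Mukai kernel, to the kernel of $\Phi_z$, and the claim follows.
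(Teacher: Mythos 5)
Your overall strategy matches the paper's: trace $k(z)$ through the explicit chain of equivalences defining $\bXi_{X,\alpha}$ in the equivariant model, identify the underlying object as a translate of ${\Gamma_y}_*\scP_\nu$, and then match linearizations against the kernel $\zeta^{(y,\xi)}$ of $\Phi_z$ from \eqref{eq:equivariant-y-xi}. The difficulty is that you have deferred precisely the part of the argument where the paper spends most of its effort: you acknowledge that matching the equivariant structure with $\zeta^{(y,\xi)}$ is ``the main obstacle'' and describe the flavor of the cocycle manipulations needed, but you do not carry them out. In the paper this is a two-page chain of explicit $2$-cocycle identities, and without it the claim is only a plausible plan, not a proof.

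Two concrete issues you should be aware of. First, you invoke as a ``key observation'' that every equivariant Fourier--Mukai transform commutes with inflation. That is not an established lemma here; the paper instead computes directly, by decomposing \eqref{eq:success-quotient} as \eqref{eq:success-quotient-step1} followed by \eqref{eq:induced-action} and tracking the $(0\times G)$-linearization and then the $(G\times 0,\bar{\bvr},\bar{a})$-linearization separately. The compatibility you want is exactly what that computation verifies; citing it as a general principle is circular. Second, you claim to land on the \emph{left} inflation $\bigoplus_{\tau\in G} t^*_{(\tau,0)}{\Gamma_y}_*\scP_\nu$, but the actual output of $\mu_*$ applied to the already-inflated object is the \emph{right} inflation $\bigoplus_{g\in G} t^*_{(0,g)}{\Gamma_y}_*\scP_\nu$ with a linearization involving the factor $a_{\sigma,g}\,a_{\sigma',g}/(a_{\sigma,\sigma'}\,a_{g,\sigma'})$, and one must then simplify via the cocycle identity and invoke the canonical isomorphism between left and right inflations (as spelled out after Example \ref{ex:identity-functor}) to finish. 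Skipping that identification is a real gap: without it you have not actually matched the kernels.
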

\begin{proof}
Recall that $\bXi_{X,\alpha}$ fits into the following commutative diagram by construction:
\begin{equation*}
\begin{tikzcd}[column sep=5em,row sep=2.5em]
	{\bdc(A,q^*\alpha)} & {\bdc(X\times X,\alpha^{-1}\boxtimes\alpha)} \\
    {\bdc(X\times\widehat{X})_{K, \bvr, \pr_1^*a}} & {\bdc(X\times X)_{G\times G, \bvr, a^{-1}\boxtimes a}} \\
	  {\bdc(X\times X)_{G,\eta,a}} & {\bdc(X\times X)_{G\times G, \bvr, \mu^*(a^{-1}\boxtimes a)}}
	\arrow["{\bXi_{X,\alpha}}", from=1-1, to=1-2]
	\arrow["{\eqref{eq:twisted-equi-A}}"', from=1-1, to=2-1]
	\arrow["{\mu_*}", from=3-2, to=2-2]
	\arrow["{\eqref{eq:twisted-equi-auto}}", from=2-2, to=1-2]
	\arrow["{\eqref{eq:equivariant-FM}}"', from=2-1, to=3-1]
	\arrow["{\eqref{eq:success-quotient}}", from=3-1, to=3-2]
\end{tikzcd}
\end{equation*}
By chasing this diagram, we will derive an explicit description of $\bXi_{X,\alpha}(k(z))$ in the equivariant setting via the equivalence \eqref{eq:twisted-equi-auto}.

\begin{enumerate}[align=left,leftmargin=0pt,labelindent=\parindent,listparindent=\parindent,labelwidth=0pt,itemindent=!,topsep=0.5\parindent,label={\bfseries \arabic*).}]

\item (Downward vertical arrows)
For any skyscraper sheaf $k(z) \in \bdc(A,q^*\alpha)$ with $z \in A$,
write $z = [(y,\xi)]$ for some $(y,\xi) \in X \times \widehat{X}$,
under the equivalence \eqref{eq:twisted-equi-A} the skyscraper sheaf $k(z)$ corresponds to the linearized object
\[
\Inf_{K, \pr_1^*a}\bigl(k(y,\xi)\bigr) = \Bigl(\bigoplus_{g\in G} t^*_{(g,\phi_\alpha(g))}k(y,\xi), \bigl\{\bigoplus_{g\in G} a_{\sigma,g}\bigr\}_{\sigma\in G}\Bigr) \in \bdc(X\times \widehat{X})_{K, \bvr, \pr_1^*a}\,.
\]

Using the canonical isomorphism $(\id\times\Phi_{\mathscr{P}})(k(y,\xi)) \cong k(y) \boxtimes \mathscr{P}_{\xi}$, the equivariant Fourier--Mukai transform \eqref{eq:equivariant-FM} maps this object to
\[
\Inf_{\eta,a}(k(y)\boxtimes\mathscr{P}_{\xi}) = \Bigl(\bigoplus_{g\in G} t^*_{(g,0)}(k(y)\boxtimes\mathscr{P}_\xi) \otimes \pr_2^*\mathscr{P}_{-\phi_\alpha(g)}, \bigl\{\bigoplus_{g\in G} a_{\sigma,g}\bigr\}_{\sigma\in G}\Bigr) \in \bdc(X\times X)_{G,\eta,a}\,.
\]

\item (Bottom horizontal arrow)
We now identify the above object $\Inf_{\eta,a}(k(y)\boxtimes\mathscr{P}_{\xi})$ to an object in $\bdc(X\times X)_{G\times G, \bvr, \mu^*(a^{-1}\boxtimes a)}$ via the equivalence \eqref{eq:success-quotient}.
Recall that in the proof of Proposition \ref{prop:success-quotient}, the equivalence is constructed by the following composition
\[
\bdc(X\times X)_{G,\eta,a}\xlongrightarrow{\eqref{eq:success-quotient-step1}}\bigl( \bdc(X \times X)_{0 \times G, \bvr} \bigr)_{G \times 0, \bar{\bvr}, \bar{a}}\xlongrightarrow{\eqref{eq:induced-action}}\bdc(X\times X)_{G\times G, \bvr, \mu^*(a^{-1}\boxtimes a)}\,.
\]
Since $\Inf_{\eta,a}(k(y)\boxtimes\mathscr{P}_{\xi})$ consists of an object in $\bdc(X\times X)$, simply denoted by $\mathcal{G}$, and the $(G,a)$-linearization on it,
under the equivalence \eqref{eq:success-quotient-step1},
$\mathcal{G}$ is sent to
\begin{align*}
(\id\times[n]_X)^*\Bigl(\bigoplus_{g\in G} t^*_{(g,0)}(k(y)\boxtimes\mathscr{P}_{\xi}) \otimes \pr_2^*\mathscr{P}_{-\phi_\alpha(g)}\Bigr)
&\cong \bigoplus_{g\in G} t^*_{(g,0)}(k(y)\boxtimes\mathscr{P}_\nu) \otimes \pr_2^*\mathcal{O}_X \\
&\cong \bigoplus_{g\in G} t^*_{(g,0)}(k(y)\boxtimes\mathscr{P}_\nu) \\
&\cong \bigoplus_{g\in G} t^*_{(g,0)}{i_y}_*\mathscr{P}_\nu\,,
\end{align*}
carrying the $(0\times G)$-linearization
\[
\theta_{0,\sigma'}\coloneq \bigoplus_{g\in G} \Bigl(t^*_{(g,0)}{i_y}_*\chi_\xi(\sigma') \cdot \langle\sigma',\phi_\alpha(g)\rangle\Bigr)\colon t^*_{(0,\sigma')}\Bigl(\bigoplus_{g\in G} t^*_{(g,0)}{i_y}_*\mathscr{P}_\nu\Bigr) \longrightarrow \bigoplus_{g\in G} t^*_{(g,0)}{i_y}_*\mathscr{P}_\nu
\]
for each $\sigma'\in G$,
where $\nu = [n]_{\widehat{X}}(\xi)$ and $i_y: X \to X\times X$ is the map  $i_y(x) = (y,x)$,
and the $(G,a)$-linearization of $\mathcal{G}$ is sent to a $(G\times0,\bar{a})$-linearization of
\[
\Bigl(\bigoplus_{g\in G} t^*_{(g,0)}{i_y}_*\mathscr{P}_\nu, \{\theta_{0,\sigma'}\}_{\sigma'\in G} \Bigr)\in \bdc(X\times X)_{0\times G, \bvr}\,.
\]
Then by the construction of the $(G\times0,\bar{\bvr},\bar{a})$-action, the equivalence \eqref{eq:induced-action} sends these data to an object in $\bdc(X\times X)_{G\times G, \bvr, \mu^*(a^{-1}\boxtimes a)}$ given by
\begin{equation}\label{obj:X*X,0*G}
\Bigl(\bigoplus_{g\in G} t^*_{(g,0)}{i_y}_*\mathscr{P}_\nu, \{\theta_{\sigma,\sigma'}\}_{(\sigma,\sigma')\in G\times G}\Bigr)\,,
\end{equation}
where the linearization maps are given by the composition
\begin{align*}
\theta_{\sigma,\sigma'} \colon & t^*_{(\sigma,\sigma')}\Bigl(\bigoplus_{g\in G} t^*_{(g,0)}{i_y}_*\mathscr{P}_\nu\Bigr) \xrightarrow{\oplus_{g\in G} a_{\sigma,\sigma'}^{-1}} t^*_{(\sigma,0)}t^*_{(0,\sigma')}\Bigl(\bigoplus_{g\in G} t^*_{(g,0)}{i_y}_*\mathscr{P}_\nu\Bigr) \\
& \quad \xrightarrow{\oplus_{g\in G} t^*_{(\sigma,0)}(\theta_{0,\sigma'})} t^*_{(\sigma,0)}\Bigl(\bigoplus_{g\in G} t^*_{(g,0)}{i_y}_*\mathscr{P}_\nu\Bigr) \xrightarrow{\oplus_{g\in G} a_{\sigma,g}} \bigoplus_{g\in G} t^*_{(g+\sigma,0)}{i_y}_*\mathscr{P}_\nu\,.
\end{align*}

\item (Upward vertical arrows)
Applying $\mu_*$ to \eqref{obj:X*X,0*G}, we obtain the canonical isomorphism
\[
\mu_*\Bigl(\bigoplus_{g\in G} t^*_{(g,0)}{i_y}_*\mathscr{P}_\nu\Bigr) \cong \bigoplus_{g\in G} t_{(0,g)}^*{\Gamma_y}_*\mathscr{P}_\nu\,,
\]
with linearization
\begin{equation*}
\bigoplus_{g\in G} \frac{a_{\sigma,g} \cdot a_{\sigma',g}}{a_{\sigma,\sigma'} \cdot a_{g,\sigma'}} \cdot t^*_{(0,g+\sigma)}{\Gamma_y}_*\bigl(\chi_\xi(\sigma')\bigr)\colon t^*_{(\sigma',\sigma+\sigma')}\Bigl(\bigoplus_{g\in G} t_{(0,g)}^*{\Gamma_y}_*\mathscr{P}_\nu\Bigr) \longrightarrow \bigoplus_{g\in G} t_{(0,g+\sigma)}^*{\Gamma_y}_*\mathscr{P}_\nu\,.
\end{equation*}
Since $a \in \rZ^2(G,\mathbb{G}_m)$ is a 2-cocycle, we have the identity:
\[
\frac{a_{\sigma,g} \cdot a_{\sigma',g}}{a_{\sigma,\sigma'} \cdot a_{g,\sigma'}} = \frac{a_{\sigma+\sigma',g}}{a_{\sigma+g,\sigma'}}\,, \quad \text{for all } \sigma,\sigma',g \in G\,.
\]
Therefore, under the equivalence \eqref{eq:twisted-equi-auto}, the object $\bXi_{X,\alpha}(k(z))$ is isomorphic to
\[
\biggl(\bigoplus_{g\in G} t^*_{(0,g)}{\Gamma_y}_*\mathscr{P}_\nu, \Bigl\{\bigoplus_{g\in G} \frac{a_{\sigma',g}}{a_{\sigma'+g-\sigma,\sigma}} \cdot t^*_{(0,\sigma'+g-\sigma)}{\Gamma_y}_*\bigl(\chi_\xi(\sigma)\bigr)\Bigr\}_{(\sigma,\sigma')\in G\times G}\biggr)\,,
\]
which is precisely the right inflation of ${\Gamma_y}_*\mathscr{P}_\nu$, equipped with a $G_\Delta$-linearization induced by $\mathscr{P}_\xi$.
Similar to Example \ref{ex:identity-functor} concerning $\mathcal{O}_\Delta$, one can show that the left and right inflations of a $G_\Delta$-linearized object are isomorphic. This establishes the desired isomorphism between $\Psi$ and $\Phi_z$.
\end{enumerate}
\end{proof}

\subsection{Adjoint of Generalized Functors}

We recall the adjoint construction, which is an important step in converting any given derived equivalence to a geometric equivalence for abelian varieties in untwisted cases.
We now explain such constructions in general twisted settings.
For any Fourier--Mukai transform \[\Phi_\scE \colon \bdc(X_1,\alpha_1) \xlongrightarrow{\sim} \bdc(X_2,\alpha_2)\] with the kernel $\scE \in \bdc(X_1\times X_2,\alpha_1^{-1}\boxtimes \alpha^{\vphantom{2}}_2)$,

let $\scF\in \bdc(X_1\times X_2,\alpha^{\vphantom{1}}_1\boxtimes \alpha_2^{-1})$ be the kernel satisfying $\Psi_{\scF} \cong \Phi_{\scE}^{-1}$.
\begin{definition}
    Combining the two equivalences given by kernels $\scE$ and $\scF$, there is an equivalence on the products, which is called the \emph{adjoint} of $\Phi_{\scE}$,
    \begin{equation}
    \label{eq:adjoint-equiv}
    \mathbf{Ad}_{\Phi_{\scE}} \colon \bdc(X_1 \times X_1, \alpha_1^{-1} \boxtimes \alpha^{\vphantom{1}}_1) \xlongrightarrow{\sim} \bdc(X_2 \times X_2, \alpha_2^{-1} \boxtimes \alpha^{\vphantom{2}}_2)\,,
    \end{equation}
    where $\bAd_{\Phi_{\scE}}$ is the Fourier--Mukai transform with kernel $\scF\boxtimes \scE$.
\end{definition}

A simple but important property is
\begin{lemma}
    For any object $\scH\in\bdc(X_1 \times X_1, \alpha_1^{-1} \boxtimes \alpha^{\vphantom{1}}_1)$, set  \[ \scH'=\bAd_{\Phi_{\scE}}(\scH) \in\bdc(X_2 \times X_2, \alpha_2^{-1} \boxtimes \alpha^{\vphantom{2}}_2)\,.\]
    There is a commutative diagram
\[\begin{tikzcd}[sep=large]
	{\bdc(X_1,\alpha_1)} & {\bdc(X_2,\alpha_2)} \\
	{\bdc(X_1,\alpha_1)} & {\bdc(X_2,\alpha_2)} \mathrlap{\,,}
	\arrow["{\Phi_{\scE}}", from=1-1, to=1-2]
	\arrow["{\Phi_{\scH}}"', from=1-1, to=2-1]
	\arrow["{\Phi_{\scH'}}", from=1-2, to=2-2]
	\arrow["{\Phi_{\scE}}", from=2-1, to=2-2]
\end{tikzcd}\]
up to isomorphism.
\end{lemma}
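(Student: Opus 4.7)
The plan is to prove the stronger equivalence
\[
\Phi_{\scH'} \cong \Phi_\scE \circ \Phi_\scH \circ \Phi_\scE^{-1}
\]
of Fourier--Mukai transforms $\bdc(X_2,\alpha_2) \to \bdc(X_2,\alpha_2)$, from which the asserted square commutes (up to isomorphism) by right--composing both sides with $\Phi_\scE$.

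First, I would recall that in both the untwisted and twisted settings, composition of Fourier--Mukai transforms corresponds to convolution of kernels: if $\Phi_{\scK_i}$ denotes the functor with kernel $\scK_i$, then $\Phi_{\scK_2} \circ \Phi_{\scK_1} \cong \Phi_{\scK_2 \star \scK_1}$, where the convolution is defined by $\scK_2 \star \scK_1 \coloneq \pr_{13,*}\bigl(\pr_{12}^\ast \scK_1 \otimes \pr_{23}^\ast \scK_2\bigr)$. In particular, the kernel of $\Phi_\scE \circ \Phi_\scH \circ \Phi_\scE^{-1}$ is the iterated convolution $\scE \star \scH \star \scF$, which lies in $\bdc(X_2 \times X_2, \alpha_2^{-1} \boxtimes \alpha_2)$.

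Next, I would unwind the definition of $\scH'$. After rearranging the factors of $\scF \boxtimes \scE$ so that its two $X_1$--factors sit in positions $1,3$ and its two $X_2$--factors in positions $2,4$, the defining formula reads
\[
\scH' \;=\; \pr_{34,*}\bigl(\pr_{12}^\ast \scH \otimes (\scF \boxtimes \scE)\bigr)
\]
on the fourfold product $X_1 \times X_1 \times X_2 \times X_2$. Applying the projection formula and flat base change to the Cartesian squares relating this fourfold product to the intermediate threefold products $X_1 \times X_1 \times X_2$ and $X_1 \times X_2 \times X_2$ allows me to recast $\scH'$ precisely as the iterated convolution $\scE \star \scH \star \scF$. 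This identifies the two kernels and hence yields the functorial isomorphism above.

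The main technical obstacle is verifying the projection formula and flat base change in the twisted setting, i.e.\ checking that the Brauer twists on all the intermediate products are compatible with the derived functor operations. The cleanest route is via the equivariant reinterpretation established in Section \ref{sec:basics} (Corollary \ref{cor:av-twisted-equivariant-equivalence}): after passing to the finite étale covers $[n]_{X_i}\colon X_i \to X_i$ realizing the Brauer classes $\alpha_i$, all the functors become ordinary pullbacks and pushforwards of equivariant sheaves, to which the standard Grothendieck six--functor formalism applies directly. One then tracks normalized $2$--cocycles representing the twists $\alpha_1 \boxtimes \alpha_1^{-1}$, $\alpha_2^{-1} \boxtimes \alpha_2$, $\alpha_1^{-1} \boxtimes \alpha_2$, and $\alpha_1 \boxtimes \alpha_2^{-1}$ across the various product decompositions to confirm that the natural linearizations on the two sides agree.
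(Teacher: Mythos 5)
Your proposal is correct and follows essentially the same route as the paper, which cites only a "direct computation of the convolution of Fourier--Mukai kernels" adapting Orlov's §1 (1.6): you correctly identify $\scH' = \bAd_{\Phi_\scE}(\scH)$ with the iterated convolution kernel of $\Phi_\scE \circ \Phi_\scH \circ \Phi_\scE^{-1}$ by unwinding the box-product kernel and applying projection formula / base change. The only small quibble is a notational slip in the phrase "positions $1,3$ and $\ldots$ positions $2,4$," which as written would yield $X_1 \times X_2 \times X_1 \times X_2$ rather than the intended $X_1 \times X_1 \times X_2 \times X_2$; the intended meaning (with $\scF$ on coordinates $1,3$ and $\scE$ on coordinates $2,4$ of $X_1\times X_1\times X_2\times X_2$) is clear and the computation goes through, and your remark about verifying the twist bookkeeping via the equivariant picture is a sensible way to handle the Brauer-class compatibilities that the paper leaves implicit.
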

\begin{proof}
The assertion follows from a direct computation of the convolution of Fourier--Mukai kernels, which can be established by adapting the same lines in \cite[§1.~(1.6)]{Orl02} for the untwisted case.
\end{proof}

Recall that for any $z=[(y,\xi)]\in A$, according to Proposition \ref{prop:embedding-A-to-Auto}, we can associate an autoequivalence
\[
\Phi_z \colon \bdc(X,\alpha)\longrightarrow\bdc(X,\alpha)\,.
\]
Then applying the above construction to the autoequivalence $\Phi_z$, we obtain the associated adjoint autoequivalence
\[
\bad_z \coloneq \bAd_{\Phi_z} \colon \bdc(X \times X, \alpha^{-1} \boxtimes \alpha) \longrightarrow \bdc(X \times X, \alpha^{-1} \boxtimes \alpha)\,.
\]

\begin{proposition}
\label{prop:quotient-derived-commute}
For any $z \in A$, the following diagram commutes up to isomorphism:
\begin{equation}
\label{diag:pointwise-line-bundle}
\begin{tikzcd}[sep=huge]
    \bdc(A, q^*\alpha)
    \ar[r, "{(-) \otimes \scP_{\psi_{\alpha}(z)}}"]
    \ar[d, "\bXi_{X,\alpha}"'] &
    \bdc(A, q^*\alpha)
    \ar[d, "\bXi_{X,\alpha}"] \\
    \bdc(X \times X, \alpha^{-1} \boxtimes \alpha)
    \ar[r, "\bad_z"] &
    \bdc(X \times X, \alpha^{-1} \boxtimes \alpha)\mathrlap{\,,}
\end{tikzcd}
\end{equation}
where $\psi_\alpha\colon A\to\widehat{A}$ is the isomorphism \eqref{eq:symplectic-isomorphism}.
\end{proposition}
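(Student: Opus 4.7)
The plan is to test the commutativity on the spanning class of skyscraper sheaves $\{k(w)\}_{w\in A}$ in $\bdc(A,q^*\alpha)$: first one obtains a pointwise identification of both composite functors applied to $k(w)$, and then one upgrades this to a natural isomorphism of Fourier--Mukai functors by identifying the line-bundle twist that varies with $w$. Since both composites $\bad_z\circ\bXi_{X,\alpha}$ and $\bXi_{X,\alpha}\circ((-)\otimes\scP_{\psi_\alpha(z)})$ are Fourier--Mukai functors of equivalences, it suffices to show the corresponding kernels in $\bdc(A\times X\times X,\dots)$ are isomorphic, a fact detectable on their fibers over $A$.

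On a skyscraper $k(w)$, the left (down-then-right) path produces $\bad_z(\bXi_{X,\alpha}(k(w)))$. By Proposition~\ref{prop:point-to-translation}, $\bXi_{X,\alpha}(k(w))$ is a Fourier--Mukai kernel for the autoequivalence $\Phi_w$ of $\bdc(X,\alpha)$; applying $\bad_z=\bAd_{\Phi_z}$ and invoking the lemma preceding the proposition, the result is a Fourier--Mukai kernel for the triple composition $\Phi_z\circ\Phi_w\circ\Phi_z^{-1}$. Since Proposition~\ref{prop:embedding-A-to-Auto} realizes $z\mapsto\Phi_z$ as an injective homomorphism from the commutative group $A$ into $\Aut(\bdc(X,\alpha))$, this triple composition is isomorphic to $\Phi_w$, so $\bad_z(\bXi_{X,\alpha}(k(w)))$ is again a kernel for $\Phi_w$, hence (non-canonically) isomorphic to $\bXi_{X,\alpha}(k(w))$. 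The right (right-then-down) path is immediate: $k(w)\otimes\scP_{\psi_\alpha(z)}\cong k(w)$ via evaluation at the support point, whence $\bXi_{X,\alpha}(k(w)\otimes\scP_{\psi_\alpha(z)})\cong\bXi_{X,\alpha}(k(w))$.

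The critical step is to upgrade this pointwise coincidence to a natural isomorphism of functors, where the specific line bundle $\scP_{\psi_\alpha(z)}$ must emerge. The ``canonical'' isomorphism $\Phi_z\circ\Phi_w\cong\Phi_w\circ\Phi_z$ used above is defined only up to a scalar in $\mathbb{G}_m$ measuring the commutator inside the central $\mathbb{G}_m$-extension of $A$ cut out by the embedding into $\Aut(\bdc(X,\alpha))$; as $w$ varies over $A$, these scalars fit together into a character of $A$, and by construction of the symplectic biextension $L_X$ in Definition~\ref{def:polishchuk-orlov}, the associated line bundle is precisely $\scP_{\psi_\alpha(z)}$. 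Concretely, one picks a representative $z=[(y,\xi)]$, unfolds $\bad_z$ at the kernel level as pullback by $(t_{-y}\times t_y)$ twisted by $(\mathscr{P}_\nu,\chi_\xi)^{-1}\boxtimes(\mathscr{P}_\nu,\chi_\xi)$, and tracks the resulting isomorphism on $A\times X\times X$ using the $2$-cocycle $a\in\rZ^2(X[n],\mathbb{G}_m)$ representing $\alpha$, much as in the explicit computation of Lemma~\ref{lem:trivial-functors}. The main obstacle is completing this bookkeeping to verify that the scalar function of $w$ is the fiber of $\scP_{\psi_\alpha(z)}$ at $w$, which is exactly where the symplectic pairing of $L_X$ enters the picture.
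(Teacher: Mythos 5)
Your pointwise computation on skyscrapers is correct, and your overall two-step plan (first show the conjugate $\bXi^{-1}_{X,\alpha}\circ\bad_z\circ\bXi_{X,\alpha}$ preserves skyscraper sheaves, then identify the resulting line-bundle twist with $\scP_{\psi_\alpha(z)}$) is the same strategy the paper follows. But there are two real problems in how you try to carry it out.

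First, the assertion that isomorphism of kernels in $\bdc(A\times X\times X,\cdots)$ is ``a fact detectable on their fibers over $A$'' is false: two kernels can have isomorphic restrictions to every fiber $\{w\}\times X\times X$ and yet differ by a nontrivial line bundle pulled back from $A$ — and that potential discrepancy is exactly the object you are trying to identify. Relatedly, the proposed mechanism for the upgrade, namely that ``these scalars fit together into a character of $A$,'' is not well-posed: each fiberwise isomorphism $\Phi_z\circ\Phi_w\circ\Phi_z^{-1}\cong\Phi_w$ is defined only up to a scalar, so there is no canonical ``scalar function of $w$'' until one has already constructed a global isomorphism. The paper resolves this by building the \emph{family} autoequivalence $\Psi\in\Aut\bigl(\bdc(A\times A,1\boxtimes q^*\alpha)\bigr)$ out of the kernels of $\id\times\bXi_{X,\alpha}$ and its inverse, showing it sends every twisted skyscraper $k(z,z')$ to itself, and then invoking the twisted Canonaco--Stellari result to conclude $\Psi=(-)\otimes\widetilde L$ for a line bundle $\widetilde L\in\Pic(A\times A)$; the restriction $\widetilde L|_{\{z\}\times A}$ is then the line bundle you want, and $\psi(0)\cong\mathcal O_A$ plus rigidity gives that $z\mapsto\widetilde L_z$ is a morphism, hence a homomorphism $\psi\colon A\to\widehat A$. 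Your proposal never produces this global object.

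Second, even granting that the conjugate functor is tensoring by \emph{some} line bundle on $A$, the identification $\psi=\psi_\alpha$ is nontrivial and you do not carry it out; you only indicate that ``the symplectic pairing of $L_X$ enters the picture.'' The paper proves $\widehat\pi\circ\psi\circ\pi=n\psi_{L_X}$ by applying both sides of the isomorphism $\bXi_{X,\alpha}\bigl((-)\otimes\widetilde L_z\bigr)\cong\bad_z\circ\bXi_{X,\alpha}$ to the inflation $\Inf_{K,\pr_1^*a}(\mathcal O_{X\times\widehat X})$ and chasing the result through the forgetful functors down to $\bdc(X\times X)$; uniqueness of the descent of the biextension then forces $\psi=\psi_\alpha$. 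Some explicit computation of this kind — applied to a test object that is not a skyscraper, since skyscrapers cannot see the line bundle — is indispensable and is the genuine content missing from your sketch.
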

\begin{proof}
The proof proceeds in the following two steps.
\begin{enumerate}[align=left,leftmargin=0pt,labelindent=0pt,listparindent=\parindent, labelwidth=0pt,itemindent=!,topsep=1em,itemsep=1em,label={\underline{\bfseries{Step \arabic*.}}}]
\item
We claim that for any $z\in A$,  the functor $\bad_z$ is isomorphic to the tensor product with a line bundle under conjugation by $\bXi_{X,\alpha}$, which will be denoted by $\widetilde{L}_z$.
Such an assignment will further define a morphism of abelian varieties,
in particular, a homomorphism of group schemes
\begin{equation}
\begin{aligned}
    \psi \colon   A &\longrightarrow  \widehat{  A}\subseteq \Aut\bigl(\bdc( A,q^\ast\alpha)\bigr) \\
    z & \longmapsto \widetilde{L}_z \quad (\mathcal{X} \longmapsto \mathcal{X}\otimes \widetilde{L}_z) \,.
\end{aligned}
\end{equation}

Let $\scE \in \bdc(  A\times X\times X, q^*\alpha^{-1}\boxtimes \alpha^{-1}\boxtimes\alpha)$ be the kernel of $\id\times \bXi_{X,\alpha}$, $\scF_{\alpha} \in \bdc(  A\times X\times X, q^*\alpha \boxtimes \alpha\boxtimes\alpha^{-1})$ be the  kernel of $\id \times \bXi_{X,\alpha}^{-1}$,
and $\widetilde{\bAd}$ be the Fourier--Mukai transform with  kernel
\[\pr^*_{14}\mathcal{O}_{\Delta} \otimes \pr^*_{136}\scE \otimes \pr^*_{125}\scF[-\dim X] \] in
$\bdc(A\times X\times X \times A\times X\times X,1\boxtimes\alpha\boxtimes\alpha^{-1}\boxtimes 1\boxtimes \alpha^{-1}\boxtimes\alpha)$.  We can obtain an autoequivalence
$\Psi\in\Aut(\bdc(A\times A, 1\boxtimes q^*\alpha ))$ defined by
\begin{equation}
\label{diag:family-line-bundle}
        \begin{tikzcd}[sep=huge]
        \bdc(A\times A, 1\boxtimes q^*\alpha)
            \ar[r, " \Psi"]
            \ar[d, "\id \times \bXi_{X,\alpha}"'] &
        \bdc(A\times A, 1\boxtimes q^*\alpha)
            \ar[d, "\id \times \bXi_{X,\alpha}"] \\
        \bdc(A \times X \times X, 1\boxtimes \alpha^{-1} \boxtimes \alpha)
            \ar[r, "\widetilde{\bAd}"] &
        \bdc(A \times X \times X, 1\boxtimes \alpha^{-1} \boxtimes \alpha)\mathrlap{\,.}
    \end{tikzcd}
\end{equation}

One can check that for any $z\in A$ and $\mathscr{G}\in\bdc(A,q^*\alpha)$,
\[
\widetilde{\bAd}(k(z)\boxtimes\scG)\cong k(z)\boxtimes\bad_z(\scG)\,.
\]

For any twisted skyscraper sheaf $k(z,z') \in \bdc(A\times A,1\boxtimes q^*\alpha)$ with $z$, $z'\in A$, a direct computation using Proposition \ref{prop:point-to-translation} shows that the image of $k(z,z')$ under $\Psi$ is
\begin{align*}
      \Psi\bigl(k(z,z')\bigr) & \cong  (\id\times\bXi_{X,\alpha})^{-1} \circ \widetilde{\bAd}\bigl(k(z)\boxtimes \bXi_{X,\alpha}\bigl(k(z')\bigr)\bigr)\\
& \cong  (\id\times\bXi_{X,\alpha})^{-1} \bigl(k(z) \boxtimes \bad_z\bigl(\bXi_{X,\alpha}(k(z'))\bigr)\bigr)\\
& \cong k(z)\boxtimes k(z') \\
& \cong k(z,z')\,,
\end{align*}
in other words, each $k(z,z')$ is invariant under $\Psi$.
By \cite[Corollary 5.3]{CS07},  $\Psi$ is given by the tensor product with a line bundle $\widetilde{L}\in \Pic(A\times A)$.
Moreover, the restriction $\widetilde{L}|_{\{z\}\times A}$ is isomorphic to the line bundle associated to $\bXi_{X,\alpha}^{-1}\circ\bad_z\circ\bXi_{X,\alpha}$, that is
\[
\psi(z) = \widetilde{L}_z \cong \widetilde{L}|_{\{z\}\times A} \in \Pic(A)\,.
\]

Since $z = 0$ corresponds to $\bad_0 = \id$ and thus \[\widetilde{L}|_{\{0\}\times A} \cong \widetilde{L}_0= \psi(0) \cong \mathcal{O}_{A} \in \Pic^0(A) = \widehat{A}\,,\]
so we actually have that $\psi\colon  A \to \widehat{ A}$ is a morphism between algebraic varieties, due to $\psi(0) \cong \mathcal{O}_A = \widehat{0} \in \widehat{A}$ and rigidity.
Moreover, $\psi$ is indeed a homomorphism,
since any morphism between abelian varieties is a homomorphism up to translation.

\item
We now check that the morphism $\psi$ is exactly the isomorphism $\psi_{\alpha}$ induced by the symplectic biextension on $ A^2$.

Recall the isomorphism $\bXi_{X,\alpha}\bigl((-)\otimes\widetilde{L}_z\bigr)\cong\bad_z\circ\bXi_{X,\alpha}$,
both sides are viewed as functors between the equivariant categories under the equivalences \eqref{eq:twisted-equi}.
Applying them to the (left) inflation of the structure sheaf $\mathcal{O}_{X\times\widehat{X}}$, i.e.,
\[
\mathscr{G}\coloneq\Inf_{K, \pr_1^*a}(\mathcal{O}_{X\times \widehat{X}})=\Bigl(\bigoplus_{g\in G}t^*_{(g,\phi_\alpha(g))}\mathcal{O}_{X\times\widehat{X}}\,,\zeta\Bigr)\in\bdc(X\times\widehat{X})_{K, \bvr, \pr_1^*a}\,,
\]
this leads to an isomorphism between two equivariant objects in $\bdc(X\times X)_{G\times G, \bvr, a^{-1}\boxtimes a}$.
Moreover, there is an isomorphism between the images of $\bXi_{X,\alpha}(\mathcal{G}\otimes\widetilde{L}_z)$ and $\bad_z\circ\bXi_{X,\alpha}(\mathcal{G})$ under the forgetful functors, which are objects in $\bdc(X\times X)$.

To get further information of these two objects in $\bdc(X\times X)$,
we rely on the explicit construction of the equivariant categories and functors,
which implies the following commutative diagram (up to isomorphism):
\begin{equation}\label{diag:forgetful-commute}
\begin{tikzcd}[scale cd=0.92]
	{\bdc(X\times \widehat{X})} &&& {\bdc(X\times X)} \\
	& {\bdc(X\times \widehat{X})_{K, \bvr, \pr_1^*a}} & {\bdc(X\times X)_{G\times G, \bvr, a^{-1}\boxtimes a}} \\
	& {\bdc(X\times \widehat{X})_{K, \bvr, \pr_1^*a}} & {\bdc(X\times X)_{G\times G, \bvr, a^{-1}\boxtimes a}} \\
	{\bdc(X\times \widehat{X})} &&& {\bdc(X\times X)}\mathrlap{\,.}
	\arrow["{\mu_*(\id\times[n]_X)^*\circ(\id\times\Phi_\mathscr{P})}", from=1-1, to=1-4]
	\arrow["{(-)\otimes\mathscr{P}_{(\xi',y')}}", from=1-1, to=4-1]
	\arrow["{\bAd_{(y,\xi)}}", from=1-4, to=4-4]
	\arrow["{\bUp}"{description}, from=2-2, to=1-1]
	\arrow["{\bXi_{X,\alpha}}", from=2-2, to=2-3]
	\arrow["{(-)\otimes\widetilde{L}_z}"', from=2-2, to=3-2]
	\arrow["{\bUp}"{description}, from=2-3, to=1-4]
	\arrow["{\bad_z}", from=2-3, to=3-3]
	\arrow["{\bXi_{X,\alpha}}", from=3-2, to=3-3]
	\arrow["{\bUp}"{description}, from=3-2, to=4-1]
	\arrow["{\bUp}"{description}, from=3-3, to=4-4]
	\arrow["{\mu_*(\id\times[n]_X)^*\circ(\id\times\Phi_\mathscr{P})}", from=4-1, to=4-4]
\end{tikzcd}
\end{equation}
Recall that $\boldsymbol{\Upsilon}$ are the forgetful functors (see Definition \ref{def:equivariant_category}),
$\mathscr{P}_{(\xi',y')}\cong\pi^*\widetilde{L}_z\in\Pic^0(X\times\widehat{X})$, and $\bAd_{(y,\xi)}$ is the adjoint of
\[
{t_y}_*(-)\otimes\mathscr{P}_\nu\colon\bdc(X)\longrightarrow\bdc(X)\,,
\]
where $(y,\xi)\in X\times\widehat{X}$ is a representative of $z\in  A$ and $\nu=[n]_{\widehat{X}}(\xi)$.
Note that the diagram is well defined since different choices of the representative give naturally isomorphic functors.

Hence, according to the commutativity of \eqref{diag:forgetful-commute},
for the inflation $\mathscr{G}$,
there is an isomorphism between the object
\begin{align*}
\bUp\circ\bXi_{X,\alpha}(\mathcal{G}\otimes\widetilde{L}_z)&\cong\mu_*(\id\times[n]_X)^*\circ(\id\times\Phi_{\mathscr{P}})\Bigl(\bigoplus_{g\in G}\mathscr{P}_{(\xi',y')}\Bigr)\\
&\cong\mu_*(\id\times[n]_X)^*\Bigl(\bigoplus_{g\in G}\mathscr{P}_{\xi'}\boxtimes k(-y')[-\dim X]\Bigr)\\
&\cong\Bigl(\bigoplus_{g\in G}k(g-w)\Bigr)\boxtimes\Bigl(\bigoplus_{g\in G}\mathscr{P}_{\xi'}\Bigr)[-\dim X]
\end{align*}
where $[n]_X(w)=-y'$, and the object
\begin{align*}
\bUp\circ\bad_z\circ\bXi_{X,\alpha}(\mathcal{G})\cong&\bAd_{(y,\xi)}\biggl(\Bigl(\bigoplus_{g\in G}k(g)\Bigr)\boxtimes\Bigl(\bigoplus_{g\in G}\mathcal{O}_X\Bigr)[-\dim X]\biggr)\\
\cong&\ \Bigl(\bigoplus_{g\in G}k(y+g)\Bigr)\boxtimes\Bigl(\bigoplus_{g\in G}\mathscr{P}_\nu\Bigr)[-\dim X]\,.
\end{align*}
Comparing the supports in the first factor and the line bundles in the second factor, this implies that the morphism $\psi: A\to\widehat{ A}$ satisfies
\[
\widehat{\pi}\circ\psi\circ\pi=n\psi_{L_X}:X\times \widehat{X}\longrightarrow \widehat{X}\times X\,.
\]
Finally one obtains $\psi=\psi_{\alpha}$ due to the uniqueness of the descent of the biextension (\cf\cite[Proposition 10.4]{Pol03}).
\end{enumerate}
\end{proof}

\section{Twisted derived Torelli}
With the twisted Orlov functors, we are ready to prove Theorem~1.1. 
Given a twisted derived equivalence between \((X_1,\alpha_1)\) and \((X_2,\alpha_2)\), the key idea is to use \(\Xi_{X_i,\alpha_i}\) to transport its adjoint to an equivalence between twisted derived categories of the associated symplectic abelian varieties.
We then show that the transported equivalence is geometric---i.e., induced by an isomorphism of the underlying symplectic abelian varieties.
We also give an application of Theorem \ref{thm:main}: for \(g\ge 2\), there exist twisted abelian varieties that are not twisted derived equivalent to their duals.

\subsection{Proof of Theorem \ref{thm:main}}

For simplicity, we adopt the abbreviated notation:
\[
A_i \coloneqq A_{(X_i,\alpha_i)} \quad (i = 1,2)\,,
\]
where each \(A_i\) carries a symplectic biextension \(L_i \in \Pic(A_i \times A_i)\), with natural projections \(q_i \colon A_i \to X_i\) and \(\pi_i \colon X_i \times \widehat{X}_i \to A_i\).

Let \(\Phi \colon \bdc(X_1, \alpha_1) \xrightarrow{\sim} \bdc(X_2, \alpha_2)\) be a Fourier--Mukai equivalence. We now construct a symplectic isomorphism between \(A_1\) and \(A_2\) from \(\Phi\) through the following steps:

\begin{enumerate}[align=left,leftmargin=0pt,labelindent=0pt,listparindent=\parindent,labelwidth=0pt,itemindent=!,topsep=1em,itemsep=1em,label={\bfseries \underline{Step \arabic*.}}]
\item We first construct a derived equivalence $\Psi\colon \bdc( A_1,q_1^\ast\alpha_1)\simeq \bdc( A_2,q_2^\ast\alpha_2)$.
Set \[\bXi_i \colon \bdc(A_i, q_i^*\alpha_i) \xlongrightarrow{\sim} \bdc(X_i \times X_i, \alpha_i^{-1} \boxtimes \alpha^{\vphantom{i}}_i)\] to be the twisted Orlov functor.

For the Fourier--Mukai transform $\Phi \colon \bdc(X_1,\alpha_1) \xrightarrow{\sim} \bdc(X_2,\alpha_2)$
 as in \eqref{eq:adjoint-equiv}, we have an associated adjoint equivalence
 \[\bAd_{\Phi} \colon \bdc(X_1 \times X_1, \alpha_1^{-1} \boxtimes \alpha^{\vphantom{1}}_1) \xlongrightarrow{\sim} \bdc(X_2 \times X_2, \alpha_2^{-1} \boxtimes \alpha^{\vphantom{2}}_2)\,.\]
Then the composition of equivalences
\[
\Psi \coloneqq \bXi_2^{-1} \circ \bAd_{\Phi} \circ \bXi_1
\]
fits into a commutative diagram (up to natural equivalence):
\begin{equation}
\label{diag:adjoint-between-two}
\begin{tikzcd}[column sep=large,row sep=large]
\bdc(A_1, q_1^*\alpha_1)
    \ar[r, "\Psi"]
    \ar[d, "\bXi_1"'] &
\bdc(A_2, q_2^*\alpha_2)
    \ar[d, "\bXi_2"] \\
\bdc(X_1 \times X_1, \alpha_1^{-1} \boxtimes \alpha^{\vphantom{1}}_1)
    \ar[r, "\bAd_{\Phi}"] &
\bdc(X_2 \times X_2, \alpha_2^{-1} \boxtimes \alpha^{\vphantom{2}}_2)\mathrlap{\,.}
\end{tikzcd}
\end{equation}

\item
We show \(\Psi\) is induced by an isomorphism of twisted abelian varieties.

\begin{proposition}
\label{prop:orlov-twisted}
    There exist an isomorphism of twisted abelian varieties
    \[
    \psi\colon (A_1,q^*_1\alpha_1)\longrightarrow (A_2,q^*_2\alpha_2)\,,
    \]
    and a line bundle $\mathscr{N}\in \Pic(A_2)$ such that
    $\Psi(-) \simeq \psi_*(-)\otimes \mathscr{N}$.
\end{proposition}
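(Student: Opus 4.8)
The plan is to show that $\Psi$ sends twisted skyscraper sheaves to twisted skyscraper sheaves, and then to conclude by the twisted analogue of the standard fact that such an equivalence is induced by an isomorphism together with a line bundle twist. Writing $\bXi_i = \bXi_{X_i,\alpha_i}$, the commutativity of \eqref{diag:adjoint-between-two} together with the basic lemma on adjoints (identifying $\bAd_\Phi(\mathscr{H})$ with the kernel of $\Phi\circ\Phi_{\mathscr{H}}\circ\Phi^{-1}$) and Proposition \ref{prop:point-to-translation} (identifying $\bXi_1(k(z))$ with the kernel of the autoequivalence $\Phi_z$ of $\bdc(X_1,\alpha_1)$) give, for every $z\in A_1$,
\[
\Psi\bigl(k(z)\bigr)\;\cong\;\bXi_2^{-1}\Bigl(\bAd_{\Phi}\bigl(\bXi_1(k(z))\bigr)\Bigr)\;\cong\;\bXi_2^{-1}\bigl(\text{kernel of }\Phi\circ\Phi_z\circ\Phi^{-1}\bigr)\,.
\]
Hence everything reduces to the following claim.

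\textbf{The key step.} For every $z\in A_1$ the conjugate $\Phi\circ\Phi_z\circ\Phi^{-1}$ is isomorphic to an autoequivalence $\Phi_{z'}$ of $\bdc(X_2,\alpha_2)$ of the type built in Proposition \ref{prop:embedding-A-to-Auto}, for a unique $z'\in A_2$, and $z\mapsto z'$ is an isomorphism of abelian varieties $\psi\colon A_1\to A_2$. This is the twisted counterpart of the core of Orlov's classification of derived autoequivalences \cite[\S 2]{Orl02}. I would argue cohomologically: the subgroup
\[
\{\Phi_w : w\in A_{(X,\alpha)}\}=\bigl\langle\ \text{translations by }X,\ \text{twists by }\Pic^0(X)\ \bigr\rangle\subseteq\Aut\bigl(\bdc(X,\alpha)\bigr)
\]
is, up to shift, exactly the subgroup of autoequivalences acting trivially on Hochschild homology $HH_*(X,\alpha)\cong HH_*(X)$ (the twist drops out of Hochschild homology by Morita invariance). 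One inclusion is immediate — translations and $\Pic^0$-twists act trivially on $HH_*$, and this property is preserved by conjugation with $\Phi$ since $\Phi$ induces an isomorphism $HH_*(X_1)\xrightarrow{\sim}HH_*(X_2)$. For the converse one follows \cite{Orl02}: an autoequivalence $F$ of $\bdc(X_2,\alpha_2)$ that is trivial on $HH_*$ sends point-like objects (shifts of twisted skyscrapers) to point-like objects, hence induces an automorphism of $X_2$; triviality on the first-homological piece of $HH_*$ (equivalently, on the $\ell$-adic realization, which is the substitute one uses in positive characteristic) forces that automorphism to be a translation; after composing with the inverse translation, $F$ fixes every twisted skyscraper and is therefore a twist by some $L\in\Pic(X_2)$ by \cite[Corollary 5.3]{CS07}; and triviality on $HH_*$ then forces $L\in\Pic^0(X_2)$. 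Because the $\Phi_w$ vary in an algebraic family over the connected base $A_{(X,\alpha)}$ with $\Phi_0=\id$, conjugating this family by the fixed kernel of $\Phi$ shows that $z\mapsto z'$ is a morphism, that the shift $m_z$ in $\Phi\circ\Phi_z\circ\Phi^{-1}\cong\Phi_{z'}[m_z]$ is locally constant hence $\equiv m_0=0$, and that $0\mapsto 0$; a morphism of abelian varieties fixing the origin is a homomorphism, and it is invertible by running the argument with $\Phi^{-1}$. I expect this converse inclusion — in particular verifying the point-like-object characterization and the criterion of \cite{CS07} in the twisted setting and over an arbitrary field — to be the main technical obstacle.

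\textbf{Conclusion.} Granting the key step, Proposition \ref{prop:point-to-translation} applied to $(X_2,\alpha_2)$ gives $\Psi(k(z))\cong\bXi_2^{-1}(\text{kernel of }\Phi_{z'})\cong k(\psi(z))$, so $\Psi$ carries every twisted skyscraper to a twisted skyscraper, with induced map on closed points the isomorphism $\psi\colon A_1\to A_2$. By the twisted analogue of \cite[Corollary 5.23]{Huy06} (see also \cite[Corollary 5.3]{CS07}) this forces $\Psi\cong\psi_*(-)\otimes\mathscr{N}$ for some $\mathscr{N}\in\Pic(A_2)$. Finally, $\psi_*$ carries a $q_1^*\alpha_1$-twisted sheaf to a $(\psi^{-1})^*q_1^*\alpha_1$-twisted sheaf and tensoring by $\mathscr{N}$ leaves the twist unchanged, so comparing with the target category $\bdc(A_2,q_2^*\alpha_2)$ of $\Psi$ forces $\psi^*q_2^*\alpha_2=q_1^*\alpha_1$; thus $\psi$ is an isomorphism of twisted abelian varieties. (Alternatively, this last identity is the third assertion of the Proposition in \S\ref{subsec:vanishing-Brauer}, once $\psi$ is known to be symplectic.)
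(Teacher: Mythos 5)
Your proposal takes a genuinely different route from the paper. The paper's proof follows the template of \cite[Proposition 9.39]{Huy06}: first observe that $\bXi_i(k(0_{A_i}))$ is the kernel of the identity, so $\Psi$ sends the skyscraper at the origin to the skyscraper at the origin; then invoke \cite[Corollary 6.14]{Huy06} (a semicontinuity argument on the kernel of $\Psi$) to get an open neighborhood $U\ni 0_{A_1}$ on which skyscrapers go to skyscrapers; then use the group law --- via the factorization $\Phi_{z_1+w_1}\cong \Phi_{w_1}\circ\Phi_{z_1}$ and the adjoint diagram --- to propagate this from $U$ to all of $A_1$; and finally apply the twisted version of \cite[Corollary 5.3]{CS07}. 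The whole argument stays inside the machinery already built in \S 4--5 and never touches Hochschild homology.

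Your plan instead characterizes the subgroup $\{\Phi_w : w\in A_{(X,\alpha)}\}$ as (up to shift) the kernel of the action on Hochschild homology, and deduces the key step from the functoriality of $HH_*$ under equivalences. At the strategic level this is a sensible and more conceptual way to package Orlov's theorem. But you are right to flag the converse inclusion as the main technical obstacle, and in the present setting it is a genuine gap, not merely an expository one: the paper establishes nothing about Hochschild homology of twisted abelian varieties, nothing about the point-like-object characterization in the twisted category, and nothing about the first-homology-triviality-implies-translation step. Each of these is a nontrivial twisted generalization of a result of Orlov/Mukai, and establishing them would require roughly as much work as the part of the paper you are trying to prove. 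So although your route would yield a valid proof if those intermediate results were available, as a proof of this Proposition \emph{within the paper's framework} it has a substantial hole exactly where you identify it. By contrast, the paper's local/propagation argument needs only Propositions \ref{prop:point-to-translation} and \ref{prop:embedding-A-to-Auto}, already proved, plus standard semicontinuity for Fourier--Mukai kernels. One small plus of your write-up: you make explicit the final verification that $\psi^*q_2^*\alpha_2 = q_1^*\alpha_1$, which the paper leaves implicit (it follows since $\psi_*(-)\otimes\mathscr{N}$ is an equivalence between the two stated twisted categories, or from the third part of the \S\ref{subsec:vanishing-Brauer} Proposition once $\psi$ is known to be symplectic).
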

\begin{proof}
The argument follows the same strategy as in the untwisted case in \cite[Proposition 9.39]{Huy06}.
We give a sketch here.

According to Proposition \ref{prop:point-to-translation},
the functor \(\bXi_i\) sends twisted skyscraper sheaves \(k(z_i)\) to kernels of autoequivalences \(\Phi_{z_i}\). In particular, $\bXi_i\bigl(k(0_{A_i})\bigr)$ is isomorphic to the kernel of $\id_{\bdc(X_i,\alpha_i)}$, where $0_{A_i}\in A_i$ is the neutral element.
Hence $\Psi\bigl(k(0_{A_1})\bigr)\cong k(0_{A_2})$.
It follows from \cite[Corollary 6.14]{Huy06} that there exists an open neighborhood $0_{A_1}\in U\subset A_1$ such that $\Psi$ sends any twisted skyscraper sheaf supported on $U$ to a twisted skyscraper sheaf on $A_2$.
Indeed, the proof of loc. cit. applies verbatim in the twisted setting.

Note that for any $z_1$, $w_1\in U$, $\Psi\bigl(k(z_1+w_1)\bigr)$ is also isomorphic to a twisted skyscraper sheaf due to the following commutative diagram
\[\begin{tikzcd}[row sep=4em,column sep=8em]
	{\bdc(X_1,\alpha_1)} & {\bdc(X_1,\alpha_1)} & {\bdc(X_1,\alpha_1)} \\
	{\bdc(X_2,\alpha_2)} & {\bdc(X_2,\alpha_2)} & {\bdc(X_2,\alpha_2)}\mathrlap{\,,}
	\arrow["{\Phi_{z_1}}"', from=1-1, to=1-2]
	\arrow["{\Phi_{(z_1+w_1)}}", curve={height=-24pt}, from=1-1, to=1-3]
	\arrow["\Phi", from=1-1, to=2-1]
	\arrow["{\Phi_{w_1}}"', from=1-2, to=1-3]
	\arrow["\Phi", from=1-2, to=2-2]
	\arrow["\Phi", from=1-3, to=2-3]
	\arrow["{\Phi_{z_2}}", from=2-1, to=2-2]
	\arrow["{\Phi_{\Ad_{\Phi}\circ\Xi_1\bigl(k(z_1+w_1)\bigr)}}"', curve={height=24pt}, from=2-1, to=2-3]
	\arrow["{\Phi_{w_2}}", from=2-2, to=2-3]
\end{tikzcd}\]
where $k(z_2)\cong\Psi\bigl(k(z_1)\bigr)$ and $k(w_2)\cong\Psi\bigl(k(w_1)\bigr)$.
These observations are sufficient to imply that $\Psi$ actually sends any twisted skyscraper sheaf on $ A_1$ to a twisted skyscraper sheaf on $ A_2$, see the step iii) in the proof of \cite[Proposition 9.39]{Huy06}.
Therefore the functor $\Psi$ is induced by a morphism $\psi\colon  A_1 \to  A_2$ and a line bundle $\mathscr{N}\in\Pic( A_2)$,
see \cite[Corollary 5.3]{CS07} and \cite[Corollary 5.23]{Huy06} for more details.
Moreover, $\psi$ is an isomorphism since $\Psi$ is an equivalence.
\end{proof}

\item
We now verify that $\psi$ preserves the symplectic structures between $A_1$ and $A_2$.
By definition, $\psi$ is symplectic if and only if \[(\psi \times \psi)^*L_2\cong L_1\,.\]
According to the construction of $\psi$,  this is equivalent to the fact that the following diagram:
\begin{equation}
\label{diag:symplectic}
\begin{tikzcd}[sep = huge]
	{\bdc(A_1,q_1^*\alpha_1)} & {\bdc(A_2,q_2^*\alpha_2)} \\
	{\bdc(A_1,q_1^*\alpha_1)} & {\bdc(A_2,q_2^*\alpha_2)}\mathrlap{\,.}
	\arrow["{\Psi}", from=1-1, to=1-2]
	\arrow["{(-)\otimes\scP_{A_1,\widehat{z}_1}}"', from=1-1, to=2-1]
	\arrow["{(-)\otimes\scP_{A_2,\widehat{z}_2}}", from=1-2, to=2-2]
	\arrow["{\Psi}", from=2-1, to=2-2]
\end{tikzcd}
\end{equation}
is commutative for any $z_1\in A_1$ and $\psi(z_1)=z_2\in A_2$, where \(\widehat{z}_i \coloneqq \psi_{L_i}(z_i) = L_i|_{\{z_i\} \times A_i} \in \Pic^0(A_i)\).
To establish this commutativity, we use \eqref{diag:pointwise-line-bundle} and \eqref{diag:adjoint-between-two} to extend \eqref{diag:symplectic} to a larger diagram:
\begin{equation}
\label{diag:symplectic-extended}
\begin{tikzcd}[row sep=large,scale cd =0.85]
	{\bdc(X_1\times X_1,\alpha_1^{-1}\boxtimes\alpha^{\vphantom{1}}_1)} &&& {\bdc(X_2\times X_2,\alpha_2^{-1}\boxtimes\alpha^{\vphantom{2}}_2)} \\
	& {\bdc(A_1,q_1^*\alpha_1)} & {\bdc(A_2,q_2^*\alpha_2)} \\
	& {\bdc(A_1,q_1^*\alpha_1)} & {\bdc(A_2,q_2^*\alpha_2)} \\
	{\bdc(X_1\times X_1,\alpha_1^{-1}\boxtimes\alpha^{\vphantom{1}}_1)} &&& {\bdc(X_2\times X_2,\alpha_2^{-1}\boxtimes\alpha^{\vphantom{2}}_2)}\mathrlap{\,.}
	\arrow["{\bAd_{\Phi}}", from=1-1, to=1-4]
	\arrow["{\bad_{z_1}}"', from=1-1, to=4-1]
	\arrow["{\bXi_2^{-1}}"{description}, from=1-4, to=2-3]
	\arrow["{\bad_{z_2}}", from=1-4, to=4-4]
	\arrow["{\bXi_1}"{description}, from=2-2, to=1-1]
	\arrow["{\Psi}", from=2-2, to=2-3]
	\arrow["{(-)\otimes\scP_{A_1,\widehat{z}_1}}"', from=2-2, to=3-2]
	\arrow["{(-)\otimes\scP_{A_2,\widehat{z}_2}}", from=2-3, to=3-3]
	\arrow["{\Psi}", from=3-2, to=3-3]
	\arrow["{\bXi_1}"{description}, from=3-2, to=4-1]
	\arrow["{\bAd_{\Phi}}", from=4-1, to=4-4]
	\arrow["{\bXi_2^{-1}}"{description}, from=4-4, to=3-3]
\end{tikzcd}
\end{equation}
By \eqref{diag:adjoint-between-two}, $\psi(z_1)=z_2$ implies that
\[
\bAd_{\Phi}\circ\bXi_1\bigl(k(z_1)\bigr)\cong\bXi_2\circ\Psi\bigl(k(z_1)\bigr)\cong\bXi_2\bigl(k(z_2)\bigr)\,,\]
i.e., $\Phi\circ\Phi_{z_1}\cong\Phi_{z_2}\circ\Phi$, due to Proposition \ref{prop:point-to-translation}.
Therefore we have
\[
\bAd_{\Phi}\circ\bad_{z_1}\cong\bAd_{(\Phi\circ\Phi_{z_1})}\cong\bAd_{(\Phi_{z_2}\circ\Phi)}\cong\bad_{z_2}\circ\bAd_{\Phi}\,,
\]
and hence the outer square commutes.
Now the commutativity of the inner square follows from that of the outer areas.
More explicitly, we have
\begin{align*}
    (-)\otimes\scP_{A_2,\widehat{z}_2}\circ\Psi
    &\cong(-)\otimes\scP_{A_2,\widehat{z}_2}\circ\bXi_2^{-1}\circ\bAd_\Phi\circ\bXi_1 \\
    &\cong\bXi_2^{-1}\circ\bad_{z_2}\circ\bAd_{\Phi}\circ\bXi_1 \\
    &\cong\bXi_2^{-1}\circ\bAd_{\Phi}\circ\bad_{z_1}\circ\bXi_1 \\
    &\cong\bXi_2^{-1}\circ\bAd_{\Phi}\circ\bXi_1\circ(-)\otimes\scP_{A_1,\widehat{z}_1} \\
    &\cong\Psi\circ(-)\otimes\scP_{A_1,\widehat{z}_1}\,.
\end{align*}
This establishes the required commutativity, completing the proof that $\psi$ is a symplectic isomorphism.
\end{enumerate}
\hfill\ensuremath{\Box}

Considering the case of $(X_1,\alpha_1)=(X_2,\alpha_2)$, simply denoted by $(X,\alpha)$, we fix a choice of $a\in\rZ^2(X[n], \mathbb{G}_m)$.
By Theorem \ref{thm:main}, we obtain a group homomorphism
\begin{align}
\label{eq:auteq-to-sympaut}
\Aut(\bD^b(X,\alpha))\to\Aut(A_{(X,\alpha)})\,.
\end{align}
The next proposition generalizes \cite[Proposition 3.3]{Orl02}.
\begin{proposition}
    Let $(X,\alpha)$ be a twisted abelian variety with $\ord(\alpha)$ invertible in $k$, then the kernel of (\ref{eq:auteq-to-sympaut}) is isomorphic to the group $\ZZ\times A_{(X,\alpha)}(k)$, where the factor $\ZZ$ is generated by the shift functor.
\end{proposition}
\begin{proof}
    Suppose that the autoequivalence defined by $\scE\in\bD^b(X\times X,\alpha^{-1}\boxtimes\alpha)$ lies in the kernel of (\ref{eq:auteq-to-sympaut}).
    This means that for any $(y,\xi)\in X\times\widehat{X}$ we have $\Phi_{\scE}\circ\Phi_{(y,\xi)}\cong\Phi_{(y,\xi)}\circ\Phi_{\scE}$ by the construction of (\ref{eq:auteq-to-sympaut}).
    In particular, for any $x\in X$ we have 
    \[
    \Phi_{\scE}(k(x))\cong\Phi_{\scE}(k(x)\otimes\scP_{\xi})\cong\Phi_{\scE}(k(x))\otimes\scP_{\xi}\,,\quad\forall\xi\in\widehat{X}\,.
    \]
    Applying the forgetful functor $\bUp$ to these isomorphisms, the objects $\bUp\bigl(\Phi_{\scE}(k(x))\bigr)$ satisfy
    \[
    \bUp\bigl(\Phi_{\scE}(k(x))\bigr)\cong\bUp\bigl(\Phi_{\scE}(k(x))\bigr)\otimes\scP_\nu\,,\quad\forall\nu=[n]_{\widehat{X}}(\xi)\in\widehat{X}
    \]
    as objects in $\bD^b(X)$.
    Then according to \cite[Section 2.2]{dJO22} the support of $\bUp\bigl(\Phi_{\scE}(k(x))\bigr)$ is a finite set.
    Hence the same holds for $\Phi_{\scE}(k(x))$.
    Since $\Phi_{\scE}$ is an equivalence, the $\operatorname{Ext}$-groups constraints force each $\Phi_{\scE}(k(x))$ to be a skyscraper sheaf up to shift and this shift needs to be a constant due to semi-continuity.
    Hence $\Phi_{\scE}$ is isomorphic to $f_*(-)\otimes\scL[m]$ for some $f\colon X\to X$, $\scL\in\Pic(X)$ and $m\in\ZZ$ as in the proof of Proposition \ref{prop:orlov-twisted}.
    Finally, comparing the supports in the commutativity with all $\Phi_{(y,0)}$ shows that $f$ is a translation.
    Then remaining line bundle is then invariant under all translations, hence lies in $\Pic^0(X)$.
    Combining this with the injectivity in Proposition \ref{prop:embedding-A-to-Auto} finishes the proof. 
\end{proof}

If we denote the image of (\ref{eq:auteq-to-sympaut}) by $\rU(A_{(X,\alpha)})$, then there is a short exact sequence
\begin{align}    
0\to\ZZ\times A_{(X,\alpha)}(k)\to\Aut(\bD^b(X,\alpha))\to\rU(A_{(X,\alpha)})\to1\,.
\end{align}
\begin{remark}
    Using the theory of twisted semi-homogeneous sheaves developed in \cite{Li25} or \cite{la26}, one can try to show that $\rU(A_{(X,\alpha)})$ is exactly the group of symplectic isomorphisms, assuming $\operatorname{char}(k)=0$.
\end{remark}

\subsection{Hodge-theoretic Realizations}
\label{subsec:cohom-real}

When $k = \mathbb{C}$, the symplectic abelian variety \(\scA_{(X,\alpha)}\) admits a Hodge-theoretic description, which we explain in this section.
Assume that the dimension of $X$ is $g$
and let $J$ denote the complex structure on $\rH_1(X,\mathbb{Z})$.
Using the exponential exact sequence
\[
\cdots \to \rH^2(X, \mathbb{Z}) \to \rH^2(X, \mathcal{O}_X) \to \rH^2(X, \mathcal{O}_X^*) \to \rH^3(X, \mathbb{Z}) \to \cdots\,,
\]
we can associate a \textbf{B}-field
\[
B_\alpha \in \rH^2(X, \mathbb{Q})
\]
so that its \((0, 2)\)-part \( B^{0,2}_\alpha \in \rH^{0,2}(X) = \rH^2(X, \mathcal{O}_X) \) gives rise to the class \( \alpha  \in \rH^2(X, \mathcal{O}_X^*)\).
The \textbf{B}-field $B_\alpha$ can be regarded as an element in $\Hom_{\mathbb{Q}}\bigl(\rH_1(X,\mathbb{Q}), \rH_1(\widehat{X},\mathbb{Q})\bigr)$,
and after choosing an integral basis of $\rH_1(X,\mathbb{Z})$ and a dual integral basis of $\rH_1(\widehat{X},\mathbb{Z})$, $B_{\alpha}$ is represented by a matrix with $\QQ$-coefficients and $n B_{\alpha}$ is represented by a matrix of $\ZZ$-coefficients since the order of $\alpha$ is $n$.
We remark that the choice of \textbf{B}-field corresponding to a Brauer class $\alpha$ is well defined modulo $\Pic(X)$.

For a Brauer class $\alpha\in \Br(X)$ of order $n$,
and a choice of the \textbf{B}-field $B_{\alpha}$ that lifts $\alpha$ as above,
we now relate $B_{\alpha}$ to a group homomorphism between the $n$-torsion points of $X$ and $\widehat{X}$ as follows, i.e.,
we shall construct a group homomorphism
\[\phi_{B_\alpha}\colon X[n] \longrightarrow \widehat{X}[n]\,.\]
Since $nB_{\alpha}(\rH_1(X,\mathbb{Z})) \subseteq \rH_1(\widehat{X},\mathbb{Z})$,
on the quotient group, we define
\[-nB_{\alpha}\colon \rH_1(X,\mathbb{Q})/\rH_1(X,\mathbb{Z}) \longrightarrow \rH_1(\widehat{X},\mathbb{Q})/\rH_1(\widehat{X},\mathbb{Z})\,,\]
and in particular, the map $\phi_{B_\alpha}$ is given by $-nB_{\alpha}$ restricted to $X[n]$.
Combining the Kummer exact sequence of multiplication by $n$ and the exponential sequence together, one obtains
\[\begin{tikzcd}
	0 & {\mathrm{NS}(X)/n\mathrm{NS}(X)} & {\mathrm{Hom}_{\mathrm{alt}}(\wedge^2 X[n], \mu_n)} & {\text{Br}(X)[n]} & 0 \\
	& {\rH^2(X, \mathbb{Z})} & {\rH^2(X, \mathcal{O}_X)} & {\rH^2(X, \mathcal{O}^*_X)}\mathrlap{\,.}
	\arrow[from=1-1, to=1-2]
	\arrow["\iota", from=1-2, to=1-3]
	\arrow["\delta", from=1-3, to=1-4]
	\arrow[from=1-4, to=1-5]
	\arrow[hook, from=1-4, to=2-4]
	\arrow[from=2-2, to=2-3]
	\arrow[from=2-3, to=2-4]
\end{tikzcd}\]
Note that if $\alpha\in \Br(X)[n]$
and a choice of exponential of $\alpha$ is given as $B_{\alpha}\in\rH^2(X,\QQ) \cong \Hom_{\mathbb{Q}}\bigl(\rH_1(X,\mathbb{Q}), \rH_1(\widehat{X},\mathbb{Q})\bigr)$,
then the map $\phi_{\alpha}\colon X[n] \to \widehat{X}[n]$ between $n$-torsion points associated to the pairing $e_{\alpha}\in \mathrm{Hom}_{\mathrm{alt}}(\wedge^2 X[n], \mu_n)$ is given just by $-n B_{\alpha}$.
More explicitly, for $u$, $v\in \mathrm{H}_1(X,\mathbb{Z})$ with $\frac{u}{n} = x$ and $\frac{v}{n} = y$ in $X[n]$,
\begin{align*}
     e_{\alpha}(x,y) & = \langle x,\phi_{\alpha}(y)\rangle
    =  \exp\Bigl(2\pi i\; \frac{1}{n} \bigl(B_{\alpha}(v),u\bigr) \Bigr) \\
    & = \exp\Bigl(2\pi i \; \frac{1}{n}{(B_{\alpha}v)}^{\top} u \Bigr) =\exp(2\pi i \; y^\top (-nB_{\alpha})x)\,.
\end{align*}
According to the skew symmetry, we have $B_{\alpha}^\top = -B_{\alpha}$ as matrices in the chosen basis,
and therefore $\phi_{\alpha} = \phi_{B_{\alpha}}$ modulo $\Pic(X)$, i.e., the maps that are induced by the line bundles.

\begin{remark}
    \begin{enumerate}
    \item
    The choice of the matrix for the complex structure is lower triangular in order to be consistent with \cite[\S 2]{KO03},
    one can switch to upper triangular matrix if one uses cohomology instead of homology, or start with the dual abelian varieties.

    \item
    The sign of the matrix of the \textbf{B}-field actually has little effect,
    since one has the following equation of matrices
    \[
        \begin{pmatrix} -\bI_{2g} & 0 \\ 0  & \bI_{2g} \end{pmatrix}
        \begin{pmatrix} J & 0 \\ BJ + J^\top B & -J^\top \end{pmatrix}
        \begin{pmatrix} -\bI_{2g} & 0 \\ 0 & \bI_{2g} \end{pmatrix}
        = \begin{pmatrix} J & 0 \\ -BJ - J^\top B & -J^\top \end{pmatrix}\,,
    \]
    which shows that the complex structures corresponding to the \textbf{B}-fields $B$ and $-B$ considered as elements in $\Hom_{\mathbb{Q}}\bigl(\rH_1(X,\mathbb{Q}), \rH_1(\widehat{X},\mathbb{Q})\bigr)$ are conjugate.
    Therefore, there is a canonical way to adjust between the signs;
    however, according to the following equation
    \[
        \begin{pmatrix} -\bI_{2g} & 0 \\ 0  & \bI_{2g} \end{pmatrix}
        \begin{pmatrix} 0 & \bI_{2g} \\ \bI_{2g} & 0 \end{pmatrix}
        \begin{pmatrix} -\bI_{2g} & 0 \\ 0 & \bI_{2g} \end{pmatrix}
        = \begin{pmatrix} 0 & -\bI_{2g} \\ -\bI_{2g} & 0 \end{pmatrix}\,,
    \]
    the adjustment is not a symplectic isomorphism between $A_{(X,\alpha)}$ and $A_{(X,\alpha^{-1})}$,
    but changes the symplectic form with a multiple of $-1$.

    \item
    The canonical symplectic biextension on $X\times\widehat{X}\times X\times\widehat{X}$ introduced by Polishchuk in \cite[\S 1]{Pol96} is
    \[
    L'=p_{14}^*\mathscr{P}\otimes p_{23}^*\mathscr{P}^{-1}
    \]
    and the corresponding isomorphism $\psi_{L'}\colon X\times\widehat{X}\to\widehat{X}\times X$ is
    \[
    \psi_{L'}=
    \begin{pmatrix}
        0 & -1_{\widehat{X}} \\
        1_X & 0
    \end{pmatrix}\,.
    \]
    Our construction generalizes Orlov's construction,
    and the symplectic biextension used on $X\times\widehat{X}\times X\times\widehat{X}$ in \cite[Proposition 2.14]{Orl02} is instead
    \[
    L=p_{23}^*\mathscr{P}\otimes p_{14}^*\mathscr{P}^{-1}\,.
    \]
    This difference accounts for the subtlety in the matrix representation of the complex structure of $A_{(X,\alpha)}$,
    and our notation follows the one used by Kapustin--Orlov in \cite{KO03}.
    \end{enumerate}
\end{remark}

Returning to the Hodge-theoretic explanation of $A_{(X,\alpha)}$,
an important fact is that
\begin{proposition}\label{prop:complex-structure-Hodge}
The abelian variety $A_{(X,\alpha)}$ is determined by $\rH_1(X,\mathbb{Z}) \oplus \rH_1(\widehat{X},\mathbb{Z})$ endowed with:
\begin{enumerate}[label={(\alph*)}]
    \item The complex structure:
    \[
    \cJ_{\alpha} \coloneq \begin{pmatrix}
        J & 0 \\
        B_\alpha J + J^{\top} B_\alpha & -J^\top
    \end{pmatrix}\,,
    \]
    where $J$ is the complex structure on $\rH_1(X,\mathbb{Z})$ determined by $X$ and $-J^\top$ is the natural dual complex structure on $\rH_1(\widehat{X},\mathbb{Z}) = \rH_1(X,\mathbb{Z})^*$.

    \item The natural symplectic structure induced by the dual pairing on $\rH_1(X,\mathbb{Z}) \oplus \rH_1(\widehat{X},\mathbb{Z})$.
\end{enumerate}
\end{proposition}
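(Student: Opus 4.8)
The plan is to work over $\CC$ and to identify $A_{(X,\alpha)}$ first as a complex torus and then as a symplectic abelian variety, by writing down its period lattice explicitly and straightening it out with a $B_\alpha$-shear.

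First I would compute the period lattice. Since $\pi_X\colon X\times\widehat{X}\to A_{(X,\alpha)}$ is an isogeny with kernel $K=\operatorname{graph}(\phi_\alpha)\subset X[n]\times\widehat X[n]$, the map $\rH_1(X,\mathbb{Z})\oplus\rH_1(\widehat X,\mathbb{Z})\hookrightarrow\rH_1(A_{(X,\alpha)},\mathbb{Z})$ is the inclusion of a finite-index sublattice, and $\rH_1(A_{(X,\alpha)},\mathbb{Z})$ is the preimage of $K$ in $\rH_1(X,\mathbb{Q})\oplus\rH_1(\widehat X,\mathbb{Q})$ under reduction modulo $\rH_1(X,\mathbb{Z})\oplus\rH_1(\widehat X,\mathbb{Z})$. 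Using the identification $\phi_\alpha=-nB_\alpha|_{X[n]}$ established above, this yields
\[
\Lambda_{A}=\bigl\{(v,\xi)\ \big|\ v\in\tfrac1n\rH_1(X,\mathbb{Z}),\ \xi+nB_\alpha v\in\rH_1(\widehat X,\mathbb{Z})\bigr\},
\]
and $A_{(X,\alpha)}$ is $(\rH_1(X,\mathbb{R})\oplus\rH_1(\widehat X,\mathbb{R}))/\Lambda_A$ equipped with the block-diagonal complex structure $J_{\mathrm{bl}}=\operatorname{diag}(J,-J^\top)$ inherited from $X\times\widehat X$.

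Next I would introduce the $\mathbb{R}$-linear shear
\[
\Phi=\begin{pmatrix}n\,\bI_{2g}&0\\ nB_\alpha&\bI_{2g}\end{pmatrix}\colon \rH_1(X,\mathbb{R})\oplus\rH_1(\widehat X,\mathbb{R})\longrightarrow\rH_1(X,\mathbb{R})\oplus\rH_1(\widehat X,\mathbb{R}).
\]
One checks directly that $\Phi(\Lambda_A)=\rH_1(X,\mathbb{Z})\oplus\rH_1(\widehat X,\mathbb{Z})$, and, using $B_\alpha^\top=-B_\alpha$, the short matrix computation
\[
\Phi\,J_{\mathrm{bl}}\,\Phi^{-1}=\begin{pmatrix}n J&0\\ nB_\alpha J&-J^\top\end{pmatrix}\begin{pmatrix}\tfrac1n\bI_{2g}&0\\ -B_\alpha&\bI_{2g}\end{pmatrix}=\begin{pmatrix}J&0\\ B_\alpha J+J^\top B_\alpha&-J^\top\end{pmatrix}=\cJ_\alpha.
\]
Hence $\Phi$ descends to an isomorphism of complex tori between $A_{(X,\alpha)}$ and $\bigl(\rH_1(X,\mathbb{R})\oplus\rH_1(\widehat X,\mathbb{R}),\cJ_\alpha\bigr)/\bigl(\rH_1(X,\mathbb{Z})\oplus\rH_1(\widehat X,\mathbb{Z})\bigr)$, which settles part (a). Note also that the same formulas show $\Phi$ carries the exact sequence $0\to\widehat X\xrightarrow{i}A_{(X,\alpha)}\xrightarrow{q_X}X\to 0$ of \eqref{eq:exact-seq-A} onto the tautological sequence $0\to\rH_1(\widehat X,\mathbb{Z})\to\rH_1(X,\mathbb{Z})\oplus\rH_1(\widehat X,\mathbb{Z})\to\rH_1(X,\mathbb{Z})\to 0$.

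For part (b) I would track the symplectic biextension through $\Phi$. Recall the biextension on $A_{(X,\alpha)}$ descends from $L_X^{\otimes n}$ and that $\psi_\alpha$ is pinned down by $\widehat{\pi_X}\circ\psi_\alpha\circ\pi_X=n\psi_{L_X}$; on homology this determines the alternating form on $\Lambda_A$ from $\psi_{L_X}$, which is the hyperbolic pairing coming from the Poincaré bundle. Combined with the previous paragraph, $\Phi$ matches the Lagrangian $\widehat X$ and its dual quotient $X$ with the standard factors, and the induced pairing between them is precisely the duality pairing $\rH_1(\widehat X,\mathbb{Z})=\rH_1(X,\mathbb{Z})^*$, i.e.\ the symplectic form in the statement. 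The main obstacle is making this last matching precise: unlike the complex structure, the symplectic biextension is genuinely a biextension --- it is of the form $P\otimes\sigma^{*}P^{-1}$ only when $q_X^{*}\alpha$ is trivial, cf.\ the proposition in \S\ref{subsec:vanishing-Brauer} --- so one must carry out the descent of $L_X^{\otimes n}$ through $\pi_X\times\pi_X$ carefully and keep track of the sign conventions discussed in the Remark above; the residual ambiguity is a form pulled back from $X$, which is governed exactly by the $2$-torsion class $q_X^{*}\alpha$ and does not change the symplectic-isomorphism class.
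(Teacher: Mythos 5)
Your argument for part (a) is essentially the paper's: your shear $\Phi=\begin{pmatrix}n\,\bI_{2g}&0\\ nB_\alpha&\bI_{2g}\end{pmatrix}$ is exactly the paper's $\widetilde{\pi}_\alpha$, and the matrix identity $\Phi J_{\mathrm{bl}}\Phi^{-1}=\cJ_\alpha$ together with the observation that $\Phi$ carries the shifted lattice to the standard one is precisely the paper's computation, merely run in the opposite direction (you pull the enlarged period lattice $\Lambda_A$ down to the standard one, while the paper pushes $\Phi$ out of $X\times\widehat{X}$ as a quotient map and invokes the universal property of $(X\times\widehat{X})/K$). For part (b) your paragraph is no more (and no less) detailed than the paper's own one-sentence appeal to descent of the biextension from $X\times\widehat{X}$; your remark that the residual ambiguity is controlled by the $2$-torsion class $q_X^*\alpha$ points at a real subtlety, but like the paper you do not carry out the matrix computation of the descended form, so this remains a gesture rather than a proof.
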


\begin{proof}
Consider the commutative diagram of lattices
\begin{equation}
\begin{tikzcd}[sep=large]
	{\Lambda^*} & {\Lambda\oplus\Lambda^*} & \Lambda \\
	{\Lambda^*} & {\Lambda\oplus\Lambda^*} & \Lambda\mathrlap{\,,}
	\arrow["i", from=1-1, to=1-2]
	\arrow["\id"', from=1-1, to=2-1]
	\arrow["p", from=1-2, to=1-3]
	\arrow["{\widetilde{\pi}_{\alpha}}", from=1-2, to=2-2]
	\arrow["{[n]_{\Lambda}}", from=1-3, to=2-3]
	\arrow["i"', from=2-1, to=2-2]
	\arrow["p"', from=2-2, to=2-3]
\end{tikzcd}
\end{equation}
where $\Lambda = \rH_1(X,\mathbb{Z})$ and $\Lambda^* = \rH_1(\widehat{X},\mathbb{Z})$ are the lattices under consideration,
and the maps are given as follows,

\hspace*{\fill}
\begin{minipage}{0.2\linewidth}
\begin{align*}
i \colon &\Lambda^* \to \Lambda \oplus\Lambda^* \\
 & \widehat{x}\mapsto(0,\widehat{x})\,,
\end{align*}
\end{minipage}
\hfill
\begin{minipage}{0.2\linewidth}
\begin{align*}
q\colon &\Lambda\oplus\Lambda^* \to \Lambda \\
& (x,\widehat{x})\mapsto x\,,
\end{align*}
\end{minipage}
\hfill
\begin{minipage}{0.4\linewidth}
\begin{align*}
\widetilde{\pi}_{\alpha}\colon &\Lambda\oplus\Lambda^*\to \Lambda\oplus\Lambda^* \\
&(x,\widehat{x}) \mapsto \bigl(nx,\widehat{x}+(nB_{\alpha})x\bigr)\,.
\end{align*}
\end{minipage}
\hspace*{\fill}

After base changing to $\mathbb{R}$ and taking the complex structures into consideration,
for any $(v,v^*)\in \Lambda_{\RR} \oplus \Lambda^*_{\RR}$, we have
\begin{align*}
    & \widetilde{\pi}_{\alpha,\RR}(\mathcal{J}_0(v,v^*))  - \mathcal{J}_{\alpha}(\widetilde{\pi}_{\alpha,\RR}(v,v^*)) \\
   = & \bigl(n Jv,-J^\top v^* + nB_{\alpha} J v\bigr) -  \mathcal{J}_{\alpha}\bigl(nv,v^*+nB_{\alpha}v\bigr) \\
   = & \bigl(n Jv,-J^\top v^* + nB_{\alpha} J v\bigr) - \bigl(nJv,(B_{\alpha}J+J^\top B_{\alpha})nv -J^\top (v^*+n B_\alpha v)\bigr) =0 \,,
\end{align*}
where $J \in \Hom(\Lambda_{\RR},\Lambda_{\RR})$ is the complex structure of $X$,
$-J^{\top}$ is the dual complex structure of $\widehat{X}$, and $\cJ_0 = \begin{pmatrix} J & 0 \\ 0 & -J^{\top}\end{pmatrix}$
along with $\cJ_{\alpha} =\begin{pmatrix} J & 0 \\ B_\alpha J + J^{\top} B_\alpha & -J^\top \end{pmatrix}$ are complex structures of $\Lambda_{\RR}\oplus\Lambda^*_{\RR}$.
Therefore we obtain the following commutative diagram
\begin{equation}
\begin{tikzcd}
	{\Lambda^*_{\RR}} && {\Lambda_{\RR}\oplus\Lambda^*_{\RR}} && \Lambda_{\RR} \\
	& {\Lambda^*_{\RR}} && {\Lambda_{\RR}\oplus\Lambda^*_{\RR}} && \Lambda_{\RR} \\
	{\Lambda^*_{\RR}} && {\Lambda_{\RR}\oplus\Lambda^*_{\RR}} && \Lambda_{\RR} \\
	& {\Lambda^*_{\RR}} && {\Lambda_{\RR}\oplus\Lambda^*_{\RR}} && \Lambda_{\RR} \mathrlap{\,.}
	\arrow["i_{\RR}"{description, pos=0.4}, from=1-1, to=1-3]
	\arrow["i_{\RR}"{description, pos=0.4}, from=2-2, to=2-4]
	\arrow["i_{\RR}"{description, pos=0.4}, from=3-1, to=3-3]
	\arrow["i_{\RR}"{description, pos=0.4}, from=4-2, to=4-4]
	\arrow["\id"{description}, from=1-1, to=2-2]
	\arrow["\id"{description}, from=3-1, to=4-2]
	\arrow["q_{\RR}"{description, pos=0.4}, from=1-3, to=1-5]
	\arrow["q_{\RR}"{description, pos=0.4}, from=2-4, to=2-6]
	\arrow["q_{\RR}"{description, pos=0.4}, from=3-3, to=3-5]
	\arrow["q_{\RR}"{description, pos=0.4}, from=4-4, to=4-6]
	\arrow["{\widetilde{\pi}_{\alpha,\RR}}"{description}, from=1-3, to=2-4]
	\arrow["{\widetilde{\pi}_{\alpha,\RR}}"{description}, from=3-3, to=4-4]
	\arrow["{-J^{\top}}"{description, pos=0.3}, from=1-1, to=3-1]
	\arrow["{-J^{\top}}"{description, pos=0.3}, from=2-2, to=4-2]
	\arrow["{\cJ_{0}}"{description, pos=0.3}, from=1-3, to=3-3]
	\arrow["{\cJ_{\alpha}}"{description, pos=0.3}, from=2-4, to=4-4]
	\arrow["J"{description, pos=0.3}, from=1-5, to=3-5]
	\arrow["J"{description, pos=0.3}, from=2-6, to=4-6]
	\arrow["{[n]_{\Lambda}}"{description}, from=1-5, to=2-6]
	\arrow["{[n]_{\Lambda}}"{description}, from=3-5, to=4-6]
\end{tikzcd}
\end{equation}

It is easy to see that the split complex structure $\cJ_0$ corresponds to the split abelian variety $X\times \widehat{X}$,
and let $\widetilde{  A}_{(X,\alpha)}$ be the abelian variety associated to $\bigl(\rH_1(X,\mathbb{Z}) \oplus \rH_1(\widehat{X},\mathbb{Z}),\cJ_{\alpha}\bigr)$,
the commutative diagram then induces a commutative diagram of abelian varieties
\begin{equation}
\label{diagram:covering-of-A}
\begin{tikzcd}[sep=large]
	0 & {\widehat{X}} & {X\times\widehat{X}} & X & 0 \\
	0 & {\widehat{X}} & {\widetilde{  A}_{(X,\alpha)}} & X & 0\mathrlap{\,.}
	\arrow[from=1-1, to=1-2]
	\arrow["i_0", from=1-2, to=1-3]
	\arrow["\id", from=1-2, to=2-2]
	\arrow["q_0", from=1-3, to=1-4]
	\arrow["{\widetilde{\pi}_{\alpha}}", from=1-3, to=2-3]
	\arrow[from=1-4, to=1-5]
	\arrow["{[n]_X}", from=1-4, to=2-4]
	\arrow[from=2-1, to=2-2]
	\arrow["{\widetilde{i}_{\alpha}}"', from=2-2, to=2-3]
	\arrow["{\widetilde{q}_{\alpha}}"', from=2-3, to=2-4]
	\arrow[from=2-4, to=2-5]
\end{tikzcd}
\end{equation}

We now check that $\widetilde{\pi}_{\alpha}\colon X\times \widehat{X} \to \widetilde{ A}_{(X,\alpha)}$ is given by taking the quotient by a graph of $X[n]$.
Let $j_0\colon X \to X\times\widehat{X}$, $x\mapsto(x,0)$ be a splitting of $0\to X \xrightarrow{i_0}X\times\widehat{X} \xrightarrow{q_0} X \to 0$.
For any $(y,\hat{y}) \in \ker\widetilde{\pi}_{\alpha}$,
one has $\widetilde{i}_{\alpha}(\hat{y}) + \widetilde{\pi}_{\alpha} \circ j_0(y) = 0$, then
\[0 = \widetilde{q}_{\alpha}\bigl(\,\widetilde{i}_{\alpha}(\hat{y}) + \widetilde{\pi}_{\alpha} j_0(y)\bigr) = 0 + [n]_X\circ q_0 \circ j_0 (y) = ny\,, \]
which means $y \in X[n]$.
Also note that if $(0,\hat{y})\in\ker\widetilde{\pi}_{\alpha}$, then $\widetilde{i}_{\alpha}(\hat{y}) = 0$, which implies $\hat{y} = 0$ as $\widetilde{i}_{\alpha}$ is injective.
Therefore $\ker\widetilde{\pi}_{\pi}$ is the graph of a map, which is easily seen to be just $\phi_{B_{\alpha}}\colon X[n] \to \widehat{X}[n]$ according to the definition of $\widetilde{\pi}_{\alpha}$.

Then considering the following diagram of exact sequences arising from \eqref{eq:exact-seq-A} and \eqref{diagram:covering-of-A},
\[
\begin{tikzcd}[sep=large]
	0 & {\widehat{X}} & {X\times\widehat{X}} & X & 0 \\
	0 & {\widehat{X}} & {\bullet} & X & 0 \mathrlap{\,,}
	\arrow[from=1-1, to=1-2]
	\arrow["{i_0}", from=1-2, to=1-3]
	\arrow["\id", from=1-2, to=2-2]
	\arrow["{q_0}", from=1-3, to=1-4]
	\arrow[dashed, from=1-3, to=2-3]
	\arrow[from=1-4, to=1-5]
	\arrow["{{[n]_X}}", from=1-4, to=2-4]
	\arrow[from=2-1, to=2-2]
	\arrow[dashed, from=2-2, to=2-3]
	\arrow[dashed, from=2-3, to=2-4]
	\arrow[from=2-4, to=2-5]
\end{tikzcd}
\]
as we have shown that $\phi_{\alpha} = \phi_{B_{\alpha}}$,
and both the tuples
\[\bigl( A_{(X,\alpha)},\pi_{\alpha},i_{\alpha},q_{\alpha}\bigr) \quad\text{and}\quad \bigl(\widetilde{A}_{(X,\alpha)},\widetilde{\pi}_{\alpha},\widetilde{i}_{\alpha},\widetilde{q}_{\alpha}\bigr)\]
fit into the diagram as the dot object and the three dashed arrows,
then one can see that these two tuples are the same according to the universal properties.
Therefore $A_{(X,\alpha)}$ has two kinds of presentations,
the one given by the complex structure ensures that it is associated to $\cJ_{\alpha}$
and the other given by the quotient of $X \times \widehat{X}$ ensures that it inherits a symplectic structure descending from $X\times \widehat{X}$,
which is determined by the dual pairing on the lattices.
\end{proof}

\begin{remark}
Since elements $L \in \Pic(X)$ induce transformations that preserve the complex structure $J$,
the definition of the complex structure $\cJ_{\alpha}$ depends only on $J$ and $\alpha$, not on the choice of $B_{\alpha}$.
\end{remark}

To end this section, as a consequence, we have proved Theorem \ref{thm:TDE} from Theorem \ref{thm:main} and Proposition \ref{prop:complex-structure-Hodge}.

\subsection{Application: dual of twisted abelian variety}

Let $k$ be an algebraically closed field of characteristic $0$. 
For abelian surfaces, a well-known result concerning Shioda's question asserts that two abelian surfaces $X$ and $X'$ are derived equivalent if and only if their associated Kummer surfaces are isomorphic, i.e.
\[
\mathrm{Km}(X) \cong \mathrm{Km}(X')
\quad\Longleftrightarrow\quad
\mathbf{D}^{\mathrm{b}}(X) \simeq \mathbf{D}^{\mathrm{b}}(X').
\]
(cf.~\cite{HosonoLianEtAl2003}). For twisted abelian surfaces $(X,\alpha)$, one may associate a unique twisted Kummer surface $(\mathrm{Km}(X), \widetilde{\alpha})$ via the canonical isomorphism
\[
\mathrm{Br}(X) \cong \mathrm{Br}(\mathrm{Km}(X)),
\]
(cf.~\cite[Proposition 2.2.1]{LZ25}). A natural question then arises:
\begin{quote}\itshape
Does an analogous statement hold in the twisted setting? That is, is $\mathbf{D}^{\mathrm{b}}(X,\alpha)$ determined by the isomorphism class of the associated twisted Kummer surface?
\end{quote}
A related question, whether $\mathbf{D}^{\mathrm{b}}(X,\alpha)$ is determined by the \emph{derived equivalence class} of the twisted Kummer surface, has been investigated in \cite{Stel07}.

As we shall see, the answer is negative. A typical counterexample already appears when $X'$ is the dual abelian surface $\widehat{X}$ of $X$: for any Brauer class $\alpha \in \operatorname{Br}(X)$, there exists a Brauer class $\beta \in \operatorname{Br}(\widehat{X})$ such that the associated twisted Kummer surfaces $(\operatorname{Km}(X), \widetilde{\alpha})$ and $(\operatorname{Km}(\widehat{X}), \widetilde{\beta})$ are isomorphic. Thus twisted Kummer surfaces alone do not detect the distinction between $(X,\alpha)$ and $(\widehat{X},\beta)$.
Nevertheless, we show that at the derived level, the categories $\mathbf{D}^{\mathrm{b}}(X,\alpha)$ and $\mathbf{D}^{\mathrm{b}}(\widehat{X},\beta)$ are not equivalent for $(X,\alpha)$ being very general. In fact, we prove a stronger statement in arbitrary dimension.

\begin{theorem}\label{thm:no-dual-equiv-general}
Let $X$ be a simple abelian variety of dimension $g \ge 2$ over an algebraically closed field of characteristic $0$ such that
\begin{itemize}
    \item $\mathrm{End}(X) \cong \mathbb{Z}$,
    \item $\mathrm{NS}(X) = \mathbb{Z}H$, where $H$ is an ample polarization of type $(1,\dots,1,d)$ with $d > 1$.
\end{itemize}
Then there exists $\alpha \in \mathrm{Br}(X)$ such that for any Brauer class $\beta \in \mathrm{Br}(\widehat{X})$, the twisted derived categories $\mathbf{D}^{\mathrm{b}}(X,\alpha)$ and $\mathbf{D}^{\mathrm{b}}(\widehat{X},\beta)$ are \emph{not} equivalent.
\end{theorem}
\begin{proof}
    Fix a level $d$ structure $X[d]\cong(\mathbb{Z}/d\mathbb{Z})^{2g}$ so that the homomorphism $\phi_H\colon X[d]\to\widehat{X}[d]$ induced by $H$ is given by the block diagonal matrix
\[
\phi_H = \diag\Bigl(\underbrace{\bJ_2, \dots, \bJ_2}_{g-1 \text{ times}}, d\bJ_2\Bigr) \,,
\]
where $\bJ_2=\begin{pmatrix}0&1\\-1&0\end{pmatrix}$ is the standard $2\times 2$ symplectic matrix.
Define a Brauer class $\alpha\in\Br(X)[d]$ via the lift $\phi_\alpha\colon X[d]\to\widehat{X}[d]$ given by the block diagonal matrix
\[
\phi_\alpha = \diag\Bigl(\underbrace{0, \dots, 0}_{2g-3 \text{ times}}, \bJ_2, 0 \Bigr)
\]
Here the identification between the matrix and the homomorphism is exactly via
$\widehat{X}[n]\cong\Hom(X[n],\mu_n)$ and the corresponding element in $\Hom(\wedge^2 X[n],\mu_n)$ is given by
\[
(x,y)\mapsto\langle x, \phi_\alpha(y)\rangle\,.
\]

Assume, for contradiction, that there exists $\beta$ such that $\bdc(X,\alpha)\simeq\bdc(\widehat{X},\beta)$.
By Theorem \ref{thm:main}, this yields a symplectic isomorphism $A_{(X,\alpha)}\cong A_{(\widehat{X},\beta)}$.
In particular, $X$ embeds as a Lagrangian subvariety of $A_{(X,\alpha)}$.

Let $i_X\colon X\hookrightarrow A_{(X,\alpha)}$ be such an embedding.
The composition $X\hookrightarrow A_{(X,\alpha)}\twoheadrightarrow X$ is an isogeny.
The assumptions on $X$ force the kernel to be $X[n]$ for some integer $n>1$.
Consequently the intersection $i_X(X)\cap\widehat{X}\subset A_{(X,\alpha)}$ is finite and isomorphic to $X[n]$.

According to a result of Polishchuk \cite[Theorem~1.2]{Pol96}, the existence of such a finite intersection forces the existence of a lift $\phi\colon X[m]\to\widehat{X}[m]$ (for some $m$ divisible by $d$) whose image is exactly $X[n]$.

Now we examine all lifts of $\alpha$ to level $m$.
Using the fixed level structures, any homomorphism $X[m]\to\widehat{X}[m]$ that lifts $\alpha$ can be written as
\[
\phi = \phi_\alpha^{(m)} + \phi_{\NS}^{(m)},
\]
where $\phi_\alpha^{(m)}$ is the natural lift of $\phi_\alpha$ to level $m$ (obtained by multiplying the $d$‑level matrix by $m/d$ in the appropriate block) and $\phi_{\NS}$ lies in the image of $\NS(X)/m\NS(X)$ under the map $\iota$ in sequence (\ref{eq:lifting-Brauer}).
Since $\NS(X)=\mathbb{Z}H$, every such $\phi_{\NS}^{(m)}$ is a multiple of the matrix representing $\phi_H$ at level $m$.
Explicitly, after choosing coordinates compatibly, the matrix of $\phi$ (acting on $(\mathbb{Z}/m\mathbb{Z})^{2g}$) has the following block structure:
\[
\diag\left(\underbrace{k\bJ_2, \dots, k\bJ_2}_{g-2 \text{ times}}, 
\begin{pmatrix}
        0 & k & 0 & 0 \\
        -k & 0 & m/d & 0 \\
        0 & -m/d & 0 & kd \\
        0 & 0 & -kd & 0
\end{pmatrix}
\right)
\]

A straightforward computation shows that, up to change of basis, the image of $\phi$ is isomorphic to
\[
\bigl(\mathbb{Z}/\tfrac{m}{\gcd(k, m/d)}\mathbb{Z}\bigr)^{\!2}\;\oplus\;
\bigl(\mathbb{Z}/\tfrac{m}{d\gcd(k, m/d)}\mathbb{Z}\bigr)^{\!2}\;\oplus\;
\bigl(\mathbb{Z}/\tfrac{m}{\gcd(k,m)}\mathbb{Z}\bigr)^{\!2g-4}\,,
\]
for some integer $k$ depending on the lift. Because $d>1$, the image of $\phi$ cannot be isomorphic to $X[n]\cong(\mathbb{Z}/n\mathbb{Z})^{2g}$ for any $n$, contradicting the requirement obtained from the Lagrangian embedding. Thus no such $\beta$ exists, completing the proof. 
\end{proof}

\begin{remark}
\label{rmk:alternative-proof-surface}
For abelian surfaces ($g=2$), there is an alternative approach via the twisted derived Torelli theorem established in \cite[Theorem C]{CLZ26}, which describes derived equivalences of twisted abelian surfaces in terms of their twisted Mukai lattices. 
Specifically, two twisted abelian surfaces are derived equivalent if and only if there exists an admissible Hodge isometry between their twisted Mukai lattices. 
One could also use this criterion to give a different proof of Theorem \ref{thm:no-dual-equiv-general} in the surface case, by showing that no such isometry can exist between their twisted Mukai lattices.
\end{remark}

\section{Derived isogeny and principal isogeny}
\label{sec:derived-isogeny}

In this section, we characterize derived isogeny classes of abelian varieties in purely classical terms: two abelian varieties of dimension at least 2 are derived isogenous if and only if they are principally isogenous. The proof relies on a decomposition of principal isogenies into spectrally paired isogenies, each of which induces a twisted derived equivalence. As a byproduct, we also obtain a connection between derived isogenies of K3 surfaces and those of their associated Kuga--Satake varieties.

We work over an algebraically closed field \(k\) with \(\operatorname{char} k = p \neq 2\). All isogenies are assumed to be prime-to-\(p\), i.e., of degree coprime to \(p\) when \(p>0\).

\subsection{Symplectic Isomorphisms from Spectrally Paired Isogenies}

Let \( X \), \( Y \) be two abelian varieties over the field \( k \). Suppose that \( f \colon \widehat{X} \to Y \) is an isogeny.  A natural question is whether $f$ can induce a symplectic map between symplectic abelian varieties associated to $X$ and $Y$.

To study this problem more concretely,
we first introduce a construction that extends the isogeny $f$ to an isogeny between $X \times \widehat{X}$ and $Y \times \widehat{Y}$ as follows:
Suppose that $\ker f\subseteq\widehat{X}[n]$ for some $n$ that is invertible in \( k \), there exists an isogeny \( f' \colon Y \to \widehat{X} \) such that
\[
f \circ f' = [n]_Y \quad \text{and} \quad f' \circ f = [n]_{\widehat{X}}\,.
\]
Consider the extended isogeny
\[
\varphi_f \coloneqq
\begin{pmatrix}
0 & f \\
-n \widehat{f}' & 0
\end{pmatrix}
\colon X \times \widehat{X} \longrightarrow Y \times \widehat{Y}\,,
\]
a direct computation shows that
\begin{equation}
\label{eq:descending-X-to-Y}
\widehat{\pi}_f \circ \psi_Y \circ \pi_f = n^2 \psi_X\,,
\end{equation}
where \( \psi_X \) and \( \psi_Y \) are isomorphisms induced by the canonical symplectic biextensions of \( (X \times \widehat{X})^2 \) and \( (Y \times \widehat{Y})^2 \) respectively.

The question now is reduced to when $\varphi_f$ can be lifted to a symplectic isomorphism.
We introduce spectrally paired isogenies to settle this problem.

\begin{definition}
 A finite abelian group is called \emph{spectrally paired} if it is isomorphic to a group of the form \( \bigoplus_{i=1}^r (\mathbb{Z}/a_i\mathbb{Z})^2 \).  A prime-to-\(p\) isogeny \( f \colon \widehat{X} \to Y \) is said to be \emph{spectrally paired} if $\ker f$ is spectrally paired.
\end{definition}

The following result guarantees that there exists an anti-symmetric homomorphism associated to a spectrally paired isogeny.
\begin{lemma}
\label{lem:ker-to-im}
Assume $p\nmid n$. Let $G\subseteq \widehat{X}[n]$ be a spectrally paired subgroup.
Then there exists an anti-symmetric homomorphism $\phi\colon X[n]\to\widehat{X}[n]$ such that $\image\phi=G$.
\end{lemma}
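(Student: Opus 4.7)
The plan is to build $\phi$ by composing a symplectic self-duality of $G$, coming from its spectrally paired structure, with the restriction of the Weil pairing. Concretely, writing $G \cong \bigoplus_{i=1}^{r}(\ZZ/a_i\ZZ)^2$ with each $a_i \mid n$, I will first pick a symplectic basis $\{x_i, y_i\}$ and define a non-degenerate alternating pairing $\omega \colon G \times G \to \mu_n$ block-by-block by
\[
\omega(x_i, y_j) = \delta_{ij}\,\zeta_{a_i}, \qquad \omega(x_i, x_j) = \omega(y_i, y_j) = 1,
\]
where $\zeta_{a_i}$ is a primitive $a_i$-th root of unity. This yields an isomorphism $\omega^{\flat}\colon G \xlongrightarrow{\sim} \widehat{G} \coloneq \Hom(G,\mu_n)$ sending $g \mapsto \omega(g,-)$.

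Next, perfectness of the Weil pairing $e_n\colon X[n] \times \widehat{X}[n] \to \mu_n$ identifies $X[n] \cong \Hom(\widehat{X}[n], \mu_n)$, and restriction along $G \hookrightarrow \widehat{X}[n]$ produces a surjection
\[
r\colon X[n] \twoheadrightarrow \widehat{G}, \qquad x \longmapsto e_n(x,-)\big|_G,
\]
surjective because $\Hom(-,\mu_n)$ is exact on finite abelian groups killed by $n$. I will then define $\phi$ as the composite
\[
X[n] \xlongrightarrow{\ r\ } \widehat{G} \xlongrightarrow{(\omega^{\flat})^{-1}} G \xhookrightarrow{\quad} \widehat{X}[n],
\]
so that surjectivity of $r$ together with the isomorphism $(\omega^{\flat})^{-1}$ immediately forces $\image \phi = G$.

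It then remains to verify anti-symmetry. For $x, y \in X[n]$, set $g_x \coloneq \phi(x)$ and $g_y \coloneq \phi(y)$; by construction $\omega(g_x, h) = e_n(x, h)$ for all $h \in G$, and similarly for $g_y$. This yields the chain
\[
e_n(x, \phi(y)) = e_n(x, g_y) = \omega(g_x, g_y) = \omega(g_y, g_x)^{-1} = e_n(y, \phi(x))^{-1},
\]
so that the associated pairing $(x,y) \mapsto e_n(x, \phi(y))$ is alternating on $X[n]$, which is precisely the skew-symmetry condition used in \S 2.4.

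I do not expect any genuine obstacle: the spectrally paired hypothesis is tailored exactly so that the block-by-block self-duality $\omega^{\flat}$ exists, and the remaining content is formal manipulation of the Weil pairing and restriction to a subgroup. The resulting $\phi$ is non-unique, reflecting its dependence on the choice of $\omega$, but this matches the fact that the class $\phi_\alpha$ in \S 2.4 is only well-defined modulo commutator forms coming from $\Pic(X)$.
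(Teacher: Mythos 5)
Your proposal is correct and is essentially the paper's argument: both encode the spectrally paired structure of $G$ by a non-degenerate alternating self-pairing (your $\omega$, the paper's block-diagonal anti-symmetric matrix $A = \diag\bigl(\tfrac{n}{a_1}\bJ_2,\dots,\tfrac{n}{a_r}\bJ_2,0,\dots,0\bigr)$) and then compose with the Weil pairing on $X[n]\times\widehat{X}[n]$ to obtain $\phi$. Your write-up spells out the two facts that the paper merely asserts — surjectivity of the restriction map $r$ (via exactness of $\Hom(-,\mu_n)$ on $n$-torsion groups) and the anti-symmetry chain — so it is a slightly more explicit rendition of the same construction rather than a different route.
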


\begin{proof}
    The finite abelian group $\widehat{X}[n]$ is homocyclic and hence homogeneous,
    therefore one can choose  a suitable isomorphism $\widehat{X}[n]\cong(\mathbb{Z}/n\mathbb{Z})^{2g}$,
    and identify $G$ with a diagonal subgroup
    \[G=\bigoplus^{r}_{i=1}(\mathbb{Z}/a_i\mathbb{Z})^2 \subseteq(\mathbb{Z}/n\mathbb{Z})^{2g}\,,\]
    (see \cite[Theorem 2]{zbMATH00061414} and \cite[\S 15 Property (E)]{zbMATH05080399}).

    Consider the anti-symmetric block diagonal matrix
    \[
    A = \diag\biggl(\frac{n}{a_1}\bJ_2,\cdots,\frac{n}{a_r}\bJ_2,0,\cdots,0\biggr)\,,
    \]
    this matrix $A$ defines an element in $\Hom(\wedge^2 X[n],\mu_n)$ via the Weil pairing as before.
    It is not hard to see that $\image\phi_X =G$. The anti-symmetry of $\phi_X$ follows from that of the matrix $A$.
\end{proof}

The key result is

\begin{theorem}
\label{thm:spectral-induce-symplectic-iso}
Let \(f \colon \widehat{X} \to Y\) be a spectrally paired prime-to-\(p\) isogeny. Then there exist natural twisted abelian varieties \((X,\alpha)\) and \((Y,\beta)\) with a symplectic isomorphism
\[
\psi \colon A_{(X,\alpha)} \longrightarrow A_{(Y,\beta)}
\]
fitting into the commutative diagram:
\[
\begin{tikzcd}[column sep=large]
	X \times \widehat{X} \ar[r, "\varphi_f"] \ar[d, "\pi_X"'] &
	Y \times \widehat{Y} \ar[d, "\pi_Y"] \\
	A_{(X,\alpha)} \ar[r, "n\psi"'] &
	A_{(Y,\beta)}\mathrlap{\,.}
\end{tikzcd}
\]
Moreover, the intersection \(\psi(\widehat{X}) \cap \widehat{Y}\) is finite with order invertible in \(k\).
\end{theorem}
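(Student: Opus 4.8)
The plan is to manufacture the twist classes directly out of $\ker f$ (and its Cartier dual) by means of Lemma \ref{lem:ker-to-im}, and then to transport the symplectic structure of $X\times\widehat{X}$ along the extended isogeny $\varphi_f$. Since $f$ is spectrally paired, $\ker f\subseteq\widehat{X}[n]$ is a spectrally paired subgroup, so Lemma \ref{lem:ker-to-im} produces a skew-symmetric homomorphism $\phi_\alpha\colon X[n]\to\widehat{X}[n]$ with $\image\phi_\alpha=\ker f$; let $\alpha\in\Br(X)[n]$ be the class it represents, $K=\operatorname{graph}(\phi_\alpha)$, so that $A_{(X,\alpha)}=(X\times\widehat{X})/K$ with quotient $\pi_X$. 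Two facts are recorded for later use: because $\image\phi_\alpha=\ker f$ and $n$ annihilates $X[n]$, the defining matrix of $\varphi_f$ shows at once that $K\subseteq\ker\varphi_f$, so $\varphi_f$ factors through $\pi_X$; and the skew-symmetry of $\phi_\alpha$, together with the compatibility of the Weil pairing with $f$ and its complementary isogeny $f'$, forces the identity $\ker\phi_\alpha=(\ker f)^{\perp}=\ker\widehat{f}'$ inside $X[n]$, which will govern the torsion counts below.

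Next I would fix the twist on $Y$ and construct the isomorphism. Set $K'\coloneqq\varphi_f\bigl(\pi_X^{-1}(A_{(X,\alpha)}[n])\bigr)=\varphi_f\bigl(\{v:nv\in K\}\bigr)\subseteq Y\times\widehat{Y}$. Since $K\subseteq\ker\varphi_f$, every element of $K'$ is killed by $n$; using $\ker\phi_\alpha=\ker\widehat{f}'$ one checks that $K'\subseteq(Y\times\widehat{Y})[n]$ has order $n^{2g}$ and meets $\{0\}\times\widehat{Y}[n]$ trivially, so that $K'=\operatorname{graph}(\phi_\beta)$ for a homomorphism $\phi_\beta\colon Y[n]\to\widehat{Y}[n]$, which moreover is skew-symmetric (this is the delicate point, discussed below). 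Let $\beta\in\Br(Y)[n]$ be the corresponding class and $A_{(Y,\beta)}=(Y\times\widehat{Y})/K'$ with quotient $\pi_Y$. By the very definition of $K'$, the isogenies $\pi_Y\circ\varphi_f$ and $[n]_{A_{(X,\alpha)}}\circ\pi_X$ kill the same subgroup $\pi_X^{-1}(A_{(X,\alpha)}[n])=\{v:nv\in K\}$; since $\deg\varphi_f=n^{4g}$ and $\deg\pi_X=\deg\pi_Y=n^{2g}$, both of these isogenies have degree $n^{6g}$, hence equal kernels, and therefore there is a unique isomorphism $\psi\colon A_{(X,\alpha)}\to A_{(Y,\beta)}$ with $\psi\circ[n]_{A_{(X,\alpha)}}\circ\pi_X=\pi_Y\circ\varphi_f$; as $[n]$ is central this reads $n\psi\circ\pi_X=\pi_Y\circ\varphi_f$, i.e. the asserted square commutes.

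To promote $\psi$ to a symplectic isomorphism, i.e. to prove $\widehat{\psi}\circ\psi_\beta\circ\psi=\psi_\alpha$, I compose with the epimorphism $\pi_X$ on the right and the monomorphism $\widehat{\pi_X}$ on the left. From $n\psi\circ\pi_X=\pi_Y\circ\varphi_f$ one gets $n(\widehat{\pi_X}\circ\widehat{\psi})=\widehat{\varphi_f}\circ\widehat{\pi_Y}$; substituting this together with $\widehat{\pi_X}\circ\psi_\alpha\circ\pi_X=n\psi_X$, $\widehat{\pi_Y}\circ\psi_\beta\circ\pi_Y=n\psi_Y$ and $\widehat{\varphi_f}\circ\psi_Y\circ\varphi_f=n^2\psi_X$ from \eqref{eq:descending-X-to-Y} yields
\[
n^{2}\cdot\bigl(\widehat{\pi_X}\circ\widehat{\psi}\circ\psi_\beta\circ\psi\circ\pi_X\bigr)=\widehat{\varphi_f}\circ(n\psi_Y)\circ\varphi_f=n^{3}\psi_X=n^{2}\cdot\bigl(\widehat{\pi_X}\circ\psi_\alpha\circ\pi_X\bigr),
\]
and cancelling $n^{2}$ (the relevant $\operatorname{Hom}$ group is torsion free), then the mono $\widehat{\pi_X}$ and the epi $\pi_X$, gives $\widehat{\psi}\circ\psi_\beta\circ\psi=\psi_\alpha$. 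For the intersection statement, restricting $n\psi\circ\pi_X=\pi_Y\circ\varphi_f$ to $\{0\}\times\widehat{X}$ and using $\varphi_f(0,\xi)=(f(\xi),0)$ and $q_Y\circ\pi_Y=[n]_Y\circ\pr_1$ gives, after one more cancellation of $n$, that $q_Y\circ\psi\circ i=f$, where $i\colon\widehat{X}\hookrightarrow A_{(X,\alpha)}$ is the canonical inclusion of \eqref{eq:exact-seq-A}; since $\ker q_Y=\widehat{Y}$ and $\psi\circ i$ is a closed immersion, $\psi(\widehat{X})\cap\widehat{Y}$ is identified with $\ker f$, which is finite of order $\deg f$ and hence invertible in $k$ because $f$ is prime to $p$.

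The step I expect to require genuine care is the one in the second paragraph: verifying that $\varphi_f\bigl(\pi_X^{-1}(A_{(X,\alpha)}[n])\bigr)$ really is the graph of a \emph{skew-symmetric} homomorphism $Y[n]\to\widehat{Y}[n]$ of order $n^{2g}$ transverse to $\{0\}\times\widehat{Y}[n]$, so that it has the form $\operatorname{graph}(\phi_\beta)$ with $\phi_\beta$ representing a Brauer class. The conceptual inputs are the Weil-pairing identities $\ker\phi_\alpha=\ker\widehat{f}'$ and the dual one $\ker\phi_\beta=\ker f'=(\ker\widehat{f})^{\perp}$, together with the biextension compatibility \eqref{eq:descending-X-to-Y}; but the cleanest way to handle the bookkeeping — in particular to distinguish a merely isotropic subgroup from the graph of a skew-symmetric map, and to control the passage between $n$- and $n^2$-torsion — is to choose a basis realizing the spectral pairing $\ker f\cong\bigoplus_i(\mathbb{Z}/a_i\mathbb{Z})^2$, in which $\varphi_f$, $\phi_\alpha$ and $\phi_\beta$ become block-diagonal with the standard $2\times2$ symplectic blocks of Lemma \ref{lem:ker-to-im}, and to check the resulting block identities directly.
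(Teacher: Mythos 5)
Your overall strategy coincides with the paper's: use Lemma \ref{lem:ker-to-im} to produce $\phi_\alpha$ with $\image\phi_\alpha=\ker f$, observe that the graph $K$ lies in $\ker\varphi_f$ so that $\varphi_f$ factors through $\pi_X$, and then transport the symplectic structure. In fact your subgroup $K'=\varphi_f\bigl(\pi_X^{-1}(A_{(X,\alpha)}[n])\bigr)$ is literally the same object as the paper's $\ker\pi_Y'$ — since $\pi_Y'\circ\pi'=[n]$, one has $\ker\pi_Y'=\pi'(A_{(X,\alpha)}[n])=\varphi_f(\pi_X^{-1}(A_{(X,\alpha)}[n]))$ — so you are tracking the same data with a slightly different bookkeeping. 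Two of your steps are genuine, correct alternatives to what the paper does: your diagram chase showing $\widehat{\psi}\circ\psi_\beta\circ\psi=\psi_\alpha$ replaces the citation of Polishchuk's Theorem 1.2 for the existence of the symplectic isomorphism, and your identity $q_Y\circ\psi\circ i=f$ directly identifies $\psi(\widehat{X})\cap\widehat{Y}\cong\ker f$, which is a sharper statement than the paper's containment argument $\pi'\bigl(\widehat{X}\cap\psi^{-1}(\widehat{Y})\bigr)\subset\{0\}$.

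The genuine gap is exactly the one you flag: you assert, but do not prove, that $K'=\operatorname{graph}(\phi_\beta)$ with $\phi_\beta$ \emph{anti-symmetric}. This is not a bookkeeping detail. Nothing in your construction of $K'$ as an image subgroup by itself forces isotropy; and your symplecticity argument cannot supply it, because the very existence of the descended isomorphism $\psi_\beta$ and the identity $\widehat{\pi_Y}\circ\psi_\beta\circ\pi_Y=n\psi_Y$ that you invoke already \emph{presuppose} that $L_Y^{\otimes n}$ descends along $\pi_Y$, which is equivalent to $K'$ being isotropic, i.e.\ to anti-symmetry of $\phi_\beta$. So as written the argument is circular at this point. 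The paper closes this gap by a conceptually clean route: it proves that the restriction $\pi_Y'|_{\widehat{Y}}$ is injective (via the identity $\pi_X|_X=\pi_Y'|_{\widehat{Y}}\circ(-\widehat{f}')$ and an order count using $\#\ker\widehat{f}'=\#\ker\phi_\alpha$), so $\widehat{Y}$ is realized as a Lagrangian subvariety of $\bigl(A_{(X,\alpha)},L\bigr)$, and it establishes $(\pi_Y'\times\pi_Y')^*L\cong L_Y^{\otimes n}$; by Polishchuk's theory these two facts together force $\ker\pi_Y'$ to be the graph of an anti-symmetric homomorphism. Your proposed replacement — choosing a basis exhibiting the spectral pairing and verifying block identities for $\varphi_f$, $\phi_\alpha$, $\phi_\beta$ — is a different, more computational route; it could in principle succeed, but it is not carried out, and you would need to be careful that the chosen trivialization of $\ker f$ simultaneously diagonalizes $f'$, $\widehat{f}'$, and the Weil pairing, which is not automatic. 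I would recommend adopting the paper's Lagrangian argument to close this step, after which the rest of your proof is sound.
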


\begin{proof}
By Lemma \ref{lem:ker-to-im}, there exists an anti-symmetric homomorphism \( \phi_X \)  with \( \image \phi_X = \ker f \).   Let $\alpha$ be the image of $\phi_X$  in $\rH^2(X,\mathbb{G}_m)$.
Note that the graph  of $\phi_X $  is contained in \( \ker \varphi_f \), we obtain a factorization
\[
\varphi_f\colon X \times \widehat{X} \xlongrightarrow{\pi_X}A_{(X,\alpha)} \xlongrightarrow{\pi'} Y \times \widehat{Y}\,.
\]

The line bundle \( L_X^{\otimes n} \) descends to a symplectic biextension \( L \) on \( A_{(X,\alpha)}^2 \) via $\pi_X$, inducing an isomorphism \( \psi_L \colon A_{(X,\alpha)} \to \widehat{A}_{(X,\alpha)} \).
These fit in the diagram:
\begin{equation}
\label{diag:decompostion-X-A-Y}
\begin{tikzcd}[sep=large]
	{X \times \widehat{X}} & A_{(X,\alpha)} & {Y \times \widehat{Y}} \\
	{\widehat{X} \times X} & {\widehat{A}_{(X,\alpha)}} & {\widehat{Y} \times Y} \mathrlap{\,.}
	\arrow["{\pi_X}", from=1-1, to=1-2]
	\arrow["{n^2 \psi_X}"', from=1-1, to=2-1]
	\arrow["{\pi'}", from=1-2, to=1-3]
	\arrow["{n \psi_L}"', from=1-2, to=2-2]
	\arrow["{\psi_Y}", from=1-3, to=2-3]
	\arrow["{\widehat{\pi}_X}"', from=2-2, to=2-1]
	\arrow["{\widehat{\pi}'}"', from=2-3, to=2-2]
\end{tikzcd}
\end{equation}

To construct \((Y,\beta)\), note that \(\ker \pi' \subseteq A_{(X,\alpha)}[n] = \ker(n\psi_L)\), so there exists an isogeny \(\pi'_Y \colon Y \times \widehat{Y} \to A_{(X,\alpha)}\) with
\[
\pi' \circ \pi'_Y = [n]_{Y \times \widehat{Y}}\,, \quad
\pi'_Y \circ \pi' = [n]_{A_{(X,\alpha)}}\,.
\]
From the right square of \eqref{diag:decompostion-X-A-Y}, we derive:
\[
n \widehat{\pi}'_Y \circ \psi_L \circ \pi'_Y = n^2 \psi_Y\,,
\]
and consequently:
\begin{equation}
\label{eq:descending-Y-to-A}
(\pi'_Y \times \pi'_Y)^* L \cong L_Y^{\otimes n}\,.
\end{equation}

According to the construction of $A_{(X,\alpha)}$, it is clear that $\widehat{X}$ is a Lagrangian subvariety of $A_{(X,\alpha)}$ via the embedding $\pi_X|_{\widehat{X}}$.
We now show that $\widehat{Y}$ can also be viewed as one.

\begin{lemma}
  The restriction \(\pi'_Y|_{\widehat{Y}} \colon \widehat{Y} \to A_{(X,\alpha)}\) is injective, realizing \(\widehat{Y}\) as a Lagrangian subvariety.
\end{lemma}
\begin{proof}
We first establish that
\begin{equation}
\label{eq:commutative-embedding}
\pi_X|_{X}=\pi'_Y|_{\widehat{Y}}\circ(-\widehat{f}')\,.
\end{equation}
Indeed, note that
\[
\pi'\circ\pi_X|_X=(0,-n\widehat{f}')=\pi'\circ\pi'_Y|_{\widehat{Y}}\circ(-\widehat{f}')\,,
\]
in other words, $\pi_X|_{X}$ and $\pi'_Y|_{\widehat{Y}}\circ(-\widehat{f}')$ are equal after composing with $\pi'$.
This implies that they must be equal before composition since $\pi'$ is an isogeny.

Let $\#(-)$ denote the order of a finite group scheme.
Note that \eqref{eq:commutative-embedding} means that $\ker(-\widehat{f}')\subset\ker(\pi_X|_X)$.
We now compute the order of these two group schemes.
First, note that the order of $\ker(\pi_X|_X)$ is $\#(\ker\phi_X)$,
and then the order of $\ker(-\widehat{f}')$ is given as
\[
\#(\ker\widehat{f}')=\#(\ker f')=\frac{\#(\ker([n]_{\widehat{X}}))}{\#(\ker f)}=\frac{\#(\ker([n]_X))}{\#(\image\phi_X)}=\#(\ker\phi_X)\,.
\]
Therefore, it follows that $\pi'_Y|_{\widehat{Y}}$ must be injective.

The isotropic condition for $\widehat{Y}$ follows directly by restricting the identity $\widehat{\pi}'_Y \circ \psi_L \circ \pi'_Y = n \psi_Y\,$ to $\widehat{Y}$, since $\widehat{Y}$ is trivially isotropic with respect to $\psi_Y$.
\end{proof}

Together with \eqref{eq:descending-Y-to-A}, these imply that $\ker\pi'_Y$ is the graph of some anti-symmetric homomorphism $\phi_Y\colon Y[n]\to\widehat{Y}[n]$ (\cf\cite{Pol96}).
Let $\beta$ be the image of $\phi_Y$ in $\rH^2(Y,\GG_m)$.
A result of Polishchuk \cite[Theorem 1.2]{Pol96} then ensures that there exists a symplectic isomorphism $\psi\colon A_{(X,\alpha)}\to A_{(Y,\beta)}$ and its composition with $\pi'_Y$ is the quotient map
\[
\pi_Y \colon Y\times\widehat{Y}\longrightarrow A_{(Y,\beta)}\,.
\]
One can check that $n\psi\circ\pi_X=\pi_Y\circ\varphi_f$ directly.

For the last statement, note that
\[
\pi'(\widehat{X}\cap\psi^{-1}(\widehat{Y}))\subset\bigl(\pi'\circ\pi_X(\widehat{X})\bigr)\cap\bigl(\pi'\circ\pi'_Y(\widehat{Y})\bigr)=\{0\}\,.
\]
Hence $\psi(\widehat{X})\cap\widehat{Y}$ is finite of order invertible in $k$ since $\pi'$ is an isogeny whose degree is invertible in $k$.
\end{proof}

As a consequence, we obtain the following corollary.
\begin{corollary}\label{cor:spiso-tde}
    If $X$ and $\widehat{Y}$ are prime-to-$p$ spectrally paired isogenous, then $X$ and $Y$ are twisted derived equivalent. In particular, $X$ and $\widehat{Y}$ are derived isogenous.
\end{corollary}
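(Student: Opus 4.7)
The statement is a clean composition of two results already in hand, so the proof plan is short. The plan is to unpack the hypothesis as a spectrally paired prime-to-$p$ isogeny, apply Theorem \ref{thm:spectral-induce-symplectic-iso} to upgrade this to a symplectic isomorphism between associated symplectic abelian varieties, and then feed the output into Polishchuk's theorem (Theorem \ref{Thm:Polishchuk}(1)) to produce the twisted derived equivalence.

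More concretely: first, I would unwind the definition of ``prime-to-$p$ spectrally paired isogenous'' to produce a spectrally paired prime-to-$p$ isogeny $f\colon \widehat{X}\to Y$. Next, I would invoke Theorem \ref{thm:spectral-induce-symplectic-iso} to obtain twist data $\alpha\in\Br(X)$ and $\beta\in\Br(Y)$, whose orders divide the (prime-to-$p$) degree of $f$ and are therefore invertible in $k$, together with a symplectic isomorphism
\[
\psi\colon A_{(X,\alpha)}\xlongrightarrow{\sim} A_{(Y,\beta)}
\]
such that $\psi(\widehat{X})\cap \widehat{Y}$ is finite of order invertible in $k$. Since the hypothesis of the section is $\chara(k)=p\neq 2$, both assumptions of Theorem \ref{Thm:Polishchuk}(1) are in place, and this directly yields a triangulated equivalence
\[
\bdc(X,\alpha)\simeq \bdc(Y,\beta),
\]
which is precisely the assertion that $X$ and $Y$ are twisted derived equivalent.

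There is essentially no obstacle at this stage, since the substantive work has already been carried out: the construction of the twist classes $\alpha,\beta$ and the verification of the intersection condition is exactly what Theorem \ref{thm:spectral-induce-symplectic-iso} provides, and Polishchuk's theorem supplies the remaining step. The only minor care needed is to check that $\ord(\alpha)$ and $\ord(\beta)$ (equivalently, the orders of the anti-symmetric homomorphisms $\phi_X$, $\phi_Y$ constructed from the spectrally paired kernel via Lemma \ref{lem:ker-to-im}) are invertible in $k$, which is automatic from the prime-to-$p$ hypothesis on $f$.
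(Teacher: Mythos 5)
Your proof is correct and matches the route the paper intends: the corollary is the immediate composite of Theorem~\ref{thm:spectral-induce-symplectic-iso} (which supplies the twist data $\alpha,\beta$, the symplectic isomorphism $\psi$, and the finite intersection $\psi(\widehat{X})\cap\widehat{Y}$ of order invertible in $k$) with Theorem~\ref{Thm:Polishchuk}(1), and your remark that $\ord(\alpha),\ord(\beta)$ are prime to $p$ because they divide the exponent of the prime-to-$p$ kernel via Lemma~\ref{lem:ker-to-im} is exactly the right sanity check.
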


\begin{remark}
    We note that the sufficiency of such isogenies for twisted derived equivalences was also independently obtained by Lane \cite[Theorem 3.12]{la26} using a different approach via semi-homogeneous twisted vector bundles.
\end{remark}

\subsection{Decomposition of Principal Isogenies}

Based on Theorem \ref{thm:spectral-induce-symplectic-iso}, we can figure out which kind of isogenies induce derived isogenies.
The following elementary result is crucial.

\begin{theorem}
\label{thm:isogeny-decomposition}
If the dimension $g \ge 2$, every principal isogeny $f\colon X\to Y$
decomposes as a composition of spectrally paired isogenies, after possibly composing with $X\xlongrightarrow{[N]_X} X$ for some positive integer $N$.  Moreover, if $f$ is prime-to-$p$, then all the spectrally paired isogenies that occur in the decomposition can be chosen to be prime-to-$p$.
\end{theorem}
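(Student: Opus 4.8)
The plan is to reformulate the statement in terms of finite subgroup schemes and lattices. A composition of isogenies $X = X_0 \to X_1 \to \cdots \to X_S$ whose composite equals $f\circ[N]_X$ is the same datum as a chain of finite subgroup schemes $0 = \mathcal{K}_0 \subseteq \mathcal{K}_1 \subseteq \cdots \subseteq \mathcal{K}_S = \ker(f\circ[N]_X)$, with $\ker(X_{t-1}\to X_t) \cong \mathcal{K}_t/\mathcal{K}_{t-1}$; so it suffices to produce, for some $N$, such a chain whose successive quotients are all spectrally paired, and then $X \to X/\mathcal{K}_1 \to \cdots \to X/\mathcal{K}_S \cong Y$ realizes $f\circ[N]_X$ as the desired composition. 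Since $\ker f$ is supported on the primes dividing $\deg f$ and, for $N$ supported on those same primes, $\ker(f\circ[N]_X)$ decomposes as the direct product of its $\ell$-primary parts, I would build the chain one prime $\ell \mid \deg f$ at a time and then concatenate; if $f$ is prime-to-$p$ every such $\ell$ is $\ne p$, so all resulting isogenies are prime-to-$p$.

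Fix a prime $\ell \mid \deg f$; it is invertible in $k$. Write $T = T_\ell X$ and $V = T\otimes\mathbb{Q}_\ell$, so that $X[\ell^\infty] = V/T$, and let $\ker f \cap X[\ell^\infty] = L/T$ for the unique lattice $T \subseteq L \subseteq V$. By the elementary divisor theorem there are a basis $e_1,\dots,e_{2g}$ of $T$ and integers $0 \le \mu_1 \le \cdots \le \mu_{2g}$ with $L = \bigoplus_i \ell^{-\mu_i}\mathbb{Z}_\ell e_i$; principality of $f$ forces $\sum_i \mu_i$ to be even. For $N = \ell^k$ one computes
\[
\ker(f\circ[\ell^k]_X)\cap X[\ell^\infty] \;=\; \ell^{-k}L/T \;=\; \Bigl(\bigoplus_i \ell^{-(\mu_i+k)}\mathbb{Z}_\ell e_i\Bigr)\big/T .
\]
So the task reduces to writing the vector $(\mu_1+k,\dots,\mu_{2g}+k)$ as a sum $\sum_{a<b} m_{ab}(e_a+e_b)$ with $m_{ab}\in\mathbb{Z}_{\ge 0}$: given such an expression, one lists the $E=\sum_{a<b} m_{ab}$ ``edges'' in any order and at step $t$ enlarges the two coordinates at the endpoints of the $t$-th edge by $1$; this produces a chain $T = M_0 \subseteq M_1 \subseteq \cdots \subseteq M_E = \ell^{-k}L$ of lattices with every $M_t/M_{t-1} \cong (\mathbb{Z}/\ell)^2$, hence a chain of subgroup schemes of $X[\ell^\infty]$ with spectrally paired successive quotients.

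Now such an expression exists precisely when $(\mu_1+k,\dots,\mu_{2g}+k)$ is the degree sequence of a loopless multigraph on $2g$ vertices, i.e.\ (the standard elementary criterion) when $\sum_i(\mu_i+k)$ is even and $2(\mu_{2g}+k) \le \sum_i(\mu_i+k)$. The parity holds because $\sum_i\mu_i$ is even; the inequality rearranges to $(2g-2)k \ge 2\mu_{2g} - \sum_i\mu_i$, which, since $g\ge 2$ makes the coefficient $2g-2$ positive, holds for all sufficiently large $k$ (for instance $k\ge\mu_{2g}$). Choosing such $k=k_\ell$ for each $\ell\mid\deg f$ and setting $N=\prod_\ell \ell^{k_\ell}$, the chains at the various primes concatenate to a chain $0 \subseteq \mathcal{K}_1 \subseteq \cdots \subseteq \mathcal{K}_S = \ker(f\circ[N]_X)$ every quotient of which is of the form $(\mathbb{Z}/\ell)^2$; this exhibits $f\circ[N]_X$ as a composition of spectrally paired isogenies, each of degree $\ell^2$ for some $\ell\mid\deg f$, hence all prime-to-$p$ when $f$ is.

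The conceptual heart — and essentially the only step that is not bookkeeping — is the recognition that ``composition of spectrally paired isogenies'' unwinds to a lattice chain built by enlarging coordinates two at a time, that the reachability of the shifted exponent vector is exactly the multigraph degree-sequence condition, and that precomposition with $[N]_X$ shifts every exponent up uniformly, making that condition satisfiable exactly under the hypothesis $g\ge 2$. (For $g=1$ the same analysis forces $\mu_1=\mu_2$, i.e.\ $\ker f$ already spectrally paired, which is why that case is excluded.) Everything else — the primary decomposition of the kernels, the fact that enlarging a pair of coordinates gives a $(\mathbb{Z}/\ell)^2$-quotient, and the identification of the final quotient $X/\mathcal{K}_S$ with $Y$ — is routine.
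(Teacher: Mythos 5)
Your argument is correct, and it takes a genuinely different (and arguably cleaner) route than the paper's.

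The paper works with the kernel $\ker f = \bigoplus_{i=1}^{2g}\ZZ/a_i\ZZ$ directly and reduces the problem to a matrix factorization: express $\diag(a_1,\dots,a_{2g})$ as a product of rational scalar matrices and integer matrices with spectrally paired cokernels. For $g=2$ it exhibits explicit $4\times4$ factorizations (splitting off two-by-two blocks and then handling the remaining perfect-square factor $d_2d_4$ by hand), and for $g>2$ it peels off the last three coordinates inductively via \eqref{eq:high-genus-decomp}. Your proposal instead passes to Tate modules, reduces to one prime $\ell$ at a time, and observes that a chain of lattices $T=M_0\subseteq\cdots\subseteq M_E=\ell^{-k}L$ with each $M_t/M_{t-1}\cong(\ZZ/\ell)^2$ is the same as expressing the elementary-divisor exponent vector $(\mu_1+k,\dots,\mu_{2g}+k)$ as a non-negative sum of $e_a+e_b$ with $a<b$, i.e.\ as the degree sequence of a loopless multigraph on $2g$ vertices. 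The standard realizability criterion (even sum and $2\max_i\leq\sum_i$) then gives the result for $k\gg0$, and the coefficient $2g-2$ in the rearranged inequality is exactly where $g\geq2$ enters. This is more uniform — no case split on $g$ — and it makes both the necessity of $g\geq2$ and the reason composing with $[N]$ helps (uniformly shifting exponents up relaxes the max-vs-sum constraint) completely transparent, whereas the paper's explicit matrices obscure this. The only caveat is that you invoke the multigraph degree-sequence criterion as ``standard''; it is, but a one-line greedy justification would make the argument self-contained.
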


\begin{proof}
Let us first show the existence of the decomposition.
We may assume that \[\ker f=\bigoplus^{2g}_{i=1}\mathbb{Z}/a_i\mathbb{Z}\] with $a_i=\prod\limits_{j=1}^{i} d_j$ for $i = 1,\ldots, 2g$,
where $d_i \in \mathbb{Z}_{>0}$ and $\deg(f)=\prod\limits_{i=1}^{2g} a_i$ is a perfect square.
Set $\ker (f\circ[N]_X)=\bigoplus_{i=1}^{2g} \ZZ/Na_i\ZZ$,
then it suffices to show that there is a filtration of subgroups
\[0 = K_0 \subset K_1\subset K_2 \ldots  K_{l-1} \subset K_{l}= \ker (f\circ[N]_X)\]
for some integers $N>0$ and $\ell \ge 0$, such that each sub-quotient $K_i/K_{i-1}$ is spectrally paired for $i = 1, \dots, l$.

If we identify $\ker f$ as the cokernel of the diagonal map
\[\diag(a_1,\ldots, a_{2g})\colon \mathbb{Z}^{2g}\longrightarrow \mathbb{Z}^{2g}\,,\]
then it is equivalent to show that $\diag(a_1,\ldots, a_{2g})\in \GL_{2g}(\ZZ)$ can be decomposed as a product of scalar matrices in $\GL_{2g}(\QQ)$ and matrices in $\GL_{2g}(\ZZ)$ whose cokernels are spectrally paired.
Here by the notation $\GL$ we mean matrices with non-zero determinant.

\begin{enumerate}[align=left,leftmargin=0pt,labelindent=0pt,listparindent=\parindent,labelwidth=0pt,itemindent=!,label={\bfseries(\Roman*)}]
\item For $g=2$, note that there is the following decomposition:
\[
\diag(a_1,a_2,a_3,a_4)
=
\begin{pmatrix}
    d_1 \bI_2 & 0 \\
    0 & d_1 \bI_2
\end{pmatrix}
\begin{pmatrix}
    \bI_2 & 0 \\
    0 & d_2d_3 \bI_2
\end{pmatrix}
\begin{pmatrix}
    1 & 0 & 0 & 0 \\
    0 & d_2 & 0 & 0 \\
    0 & 0 & 1 & 0 \\
    0 & 0 & 0 & d_4
\end{pmatrix}\,.
\]

To prove the assertion,  we need to further decompose the third matrix when $d_2d_4$ is a perfect square. Assuming $\gcd(d_2,d_4)=d$, the condition that $d_2d_4$ is a perfect square implies that $d_2=da^2$ and $d_4=db^2$ for some $a$, $b \in \mathbb{Z}_{>0}$.
The matrix decomposes as
\[
\begin{pmatrix}
    1 & 0 & 0 & 0 \\
    0 & da^2 & 0 & 0 \\
    0 & 0 & 1 & 0 \\
    0 & 0 & 0 & db^2
\end{pmatrix}
=
\begin{pmatrix}
    1 & 0 & 0 & 0 \\
    0 & d & 0 & 0 \\
    0 & 0 & 1 & 0 \\
    0 & 0 & 0 &d
\end{pmatrix}
\begin{pmatrix}
    1 & 0 & 0 & 0 \\
    0 & a^2 & 0 & 0 \\
    0 & 0 & 1 & 0 \\
    0 & 0 & 0 & 1
\end{pmatrix}
\begin{pmatrix}
    1 & 0 & 0 & 0 \\
    0 & 1 & 0 & 0 \\
    0 & 0 & 1 & 0 \\
    0 & 0 & 0 & b^2
\end{pmatrix}.
\]
The last two factors decompose via
\begin{align*}
   \begin{pmatrix}
        1 & & & \\ & 1 & & \\ & & 1 & \\ & & & x^2
    \end{pmatrix}
    = \frac{1}{x}\begin{pmatrix}
        1 & & & \\ & 1 & & \\ & & x & \\ & & & x
    \end{pmatrix} \begin{pmatrix}
        1 & & & \\ & x & & \\ & & 1 & \\ & & & x
    \end{pmatrix} \begin{pmatrix}
        x & & & \\ & 1 & & \\ & & 1 & \\ & & & x
    \end{pmatrix}
    \end{align*}
which completes the proof in this case.

\item
For $g > 2$, we separate the last three coordinates using the decomposition:
\begin{equation}
\label{eq:high-genus-decomp}
\begin{aligned}
\diag(a_1, \ldots, a_{2g}) = \frac{1}{d_{2g}} &
\begin{pmatrix}
\diag(a_1d_{2g}, a_2d_{2g} , \ldots, a_{2g-3}d_{2g}, a_{2g-2}) & 0 \\
0 & \bI_2
\end{pmatrix}\\
& \begin{pmatrix}
\bI_{2g-2} & 0 \\
0 & \diag(a_{2g}, a_{2g})
\end{pmatrix}
\begin{pmatrix}
\bI_{2g-3} & 0 & 0 & 0\\
0 & d_{2g} & 0 & 0 \\
0 & 0 & 1 & 0 \\
0 & 0 & 0 & d_{2g}
\end{pmatrix}\,.
\end{aligned}
\end{equation}
By repeatedly applying this decomposition to the matrix $\diag(a_1d_{2g}, a_2d_{2g} , \ldots, a_{2g-3}d_{2g},\allowbreak a_{2g-2})$, we conclude our assertion by induction.

\end{enumerate}

For the last assertion, note that if $\phi$ is prime-to-$p$, then all the $d_i$ are prime to $p$, and our construction preserves this property throughout the decomposition. Therefore, all the spectrally paired isogenies in the decomposition are also prime-to-$p$.
\end{proof}

\subsection{Proof of Theorem \ref{thm:main-2} and Theorem \ref{thm3}}
\label{subsec:Proof-main-2}

Suppose that $g\geq2$.
A derived isogeny is a sequence of derived equivalences of twisted abelian varieties,
according to Theorem \ref{thm:main} and the latter assertion of Theorem \ref{Thm:Polishchuk},
it induces a sequence of principal isogenies and thus again principal after composition.

Conversely, by Theorem \ref{thm:isogeny-decomposition}, a prime-to-$p$ principal isogeny is a composition of prime-to-$p$ spectrally paired isogenies up to scalar multiplications. By Corollary \ref{cor:spiso-tde}, we see that $X$ and $Y$ are derived isogenous.

\subsection{Proof of Corollary \ref{cor:KS-isogeous}}

We work over $k = \mathbb{C}$; the general case follows by descent.  
Let $X_1$, $X_2$ be two complex projective K3 surfaces that are derived isogenous.  
By \cite[Theorem 0.1]{Huy19} there exists a rational Hodge isometry  
\[
\rT(X_1)_{\mathbb{Q}} \;\cong\; \rT(X_2)_{\mathbb{Q}}\,,
\]  
where $\rT(X_i) \subset \rH^2(X_i,\mathbb{Z})$ is the transcendental lattice.  

Choose integral overlattices $\rT'(X_i) \subset \rT(X_i)$ (of finite index) such that $\rT'(X_1) \cong \rT'(X_2)$ as integral Hodge structures, and set $d_i \coloneq [\rT(X_i):\rT'(X_i)]$.  
Let $\mathrm{KS}(L)$ denote the Kuga--Satake abelian variety associated to an integral weight‑$2$ Hodge lattice $L$. By definition, we have 
\[\mathrm{KS}(X_i)=\mathrm{KS}(\rT(X_i))\,.\]
The isomorphism $\rT'(X_1) \cong \rT'(X_2)$ gives an isomorphism of abelian varieties
\[
\mathrm{KS}\bigl(\rT'(X_1)\bigr) \;\cong\; \mathrm{KS}\bigl(\rT'(X_2)\bigr)\,.
\]

For each $i$ the inclusion $\rT'(X_i) \hookrightarrow \rT(X_i)$ induces a natural isogeny
\[
\varphi_i \colon \mathrm{KS}\bigl(\rT'(X_i)\bigr) \longrightarrow \mathrm{KS}\bigl(\rT(X_i)\bigr)\,.
\]
For simplicity, we denote $V=\rT(X_i)$ and $W=\rT'(X_i)$.
Let $r=\operatorname{rk} V$.
After choosing bases, the inclusion $W\subset V$ is represented by an integral nonsingular matrix $A$ with $\det A = d_i$.
As $\mathbb{Z}$-modules, the even Clifford algebra is isomorphic to the even exterior algebra. Hence the kernel of $\varphi_i$ is
\[
\ker\!\Bigl(\mathrm{Cl}^+(W_{\mathbb{R}})/\mathrm{Cl}^+(W)\to\mathrm{Cl}^+(V_{\mathbb{R}})/\mathrm{Cl}^+(V)\Bigr)
\;\cong\;
\ker\!\Bigl(\bigwedge^{\rm even}W_{\mathbb{R}}/\bigwedge^{\rm even}W\to\bigwedge^{\rm even}V_{\mathbb{R}}/\bigwedge^{\rm even}V\Bigr)\,.
\]
Since the kernel on each even exterior power $\bigwedge^{2j}$ has order $(\det A)^{\binom{r-1}{2j-1}}$, the total kernel has order
\[
\prod_{j=1}^{\lfloor r/2\rfloor} (\det A)^{\binom{r-1}{2j-1}} = (\det A)^{2^{r-2}} \,.
\]
Consequently
\[
\deg \varphi_i = (\det A)^{2^{r-2}} = d_i^{2^{r-2}} \,.
\]
Thus $\deg \varphi_i$ is a perfect square, and $\varphi_i$ is a principal isogeny whenever $r \ge 3$ (equivalently $2^{r-2}$ is even).
Composing the isogenies $\varphi_i$ with the isomorphism $\mathrm{KS}(\rT'(X_1)) \cong \mathrm{KS}(\rT'(X_2))$ yields a principal isogeny between $\mathrm{KS}(X_1)$ and $\mathrm{KS}(X_2)$ if the dimension of $\mathrm{KS}(X_i)$ is greater than $1$. Hence Theorem \ref{thm:main-2} implies that the Kuga--Satake varieties are derived isogenous in this case. \qed

\begin{remark}\label{rmk:KSexists}    
Over an arbitrary algebraically closed field $k$, an associated Kuga--Satake variety  $\mathrm{KS}(S)$ can be obtained via the descent theory if its dimension is greater than $2$ (cf.~\cite[Lemma 1.7.1]{An96}).
The result of Corollary \ref{cor:KS-isogeous} also holds for projective K3 surfaces over an algebraically closed field $k$ with $\operatorname{char}(k)=0$, if we restrict to Kuga--Satake varieties of dimension greater than $2$. This dimension condition is equivalent to requiring that the Picard number of the K3 surface is at most $18$.
\end{remark}

\printbibliography

\end{document}